\documentclass{article}

\usepackage[utf8]{inputenc}
\usepackage{graphicx}
\usepackage[normalem]{ulem}
\usepackage{mathrsfs}
\usepackage{amssymb}
\usepackage{amsfonts, mathtools}
\usepackage{amssymb,amsmath,amsthm,tikz-cd, tikz}
\usepackage{xcolor}
\usepackage{hyperref}
\hypersetup{
    colorlinks,
    linkcolor={blue!80!black},
    citecolor={blue!80!black},
    urlcolor={blue!80!black}
}
\usepackage{caption}
\usepackage{subcaption}
\usetikzlibrary{shapes.geometric}
\usepackage{wasysym}
\usepackage{pifont}
\usetikzlibrary{calc}
\usetikzlibrary{decorations.markings}

\usepackage{epigraph}

\usepackage[margin=1in,marginparwidth=0.8in, marginparsep=0.1in]{geometry}

\usepackage{tikz-3dplot}

\tdplotsetmaincoords{60}{120} 

\title{Fukaya category of symmetric product from gluing}
\author{...}

\def\R{\mathbb{R}}
\def\C{\mathbb{C}}
\def\Z{\mathbb{Z}}
\def\N{\mathbb{N}}

\def\In{\subset}

\def\d{\partial}

\def\wt{\widetilde}
\def\wb{\overline}

\def\RM{\backslash}
\def\into{\hookrightarrow}

\DeclareMathOperator{\Sect}{Sect}

\DeclareMathOperator{\Sym}{Sym}
\DeclareMathOperator{\Clus}{Clus}

\DeclareMathOperator{\Fuk}{Fuk}

\DeclareMathOperator{\colim}{colim}
\DeclareMathOperator{\dist}{dist}
\DeclareMathOperator{\sep}{sep}

\newcommand{\ootimes}{
\ooalign{$\otimes$\cr\hidewidth$\bigcirc$\hidewidth}}

\renewcommand{\Im}{\mathrm{Im}}
\renewcommand{\Re}{\mathrm{Re}}

\def\bz{\mathbf{z}} 
\def\bx{\mathbf{x}} 
\def\ba{\mathbf{a}} 

\usepackage{color}

\newtheorem{theorem}{Theorem}[section]
\newtheorem{lemma}[theorem]{Lemma}
\newtheorem{corollary}[theorem]{Corollary}
\newtheorem{proposition}[theorem]{Proposition}

\theoremstyle{definition}

\newtheorem{definition}
[theorem]{Definition}

\newtheorem{example}[theorem]{Example}
\newtheorem{remark}[theorem]{Remark}

\newcommand{\triplerightarrow}{%
  \mathrel{\raisebox{.3ex}{$\rightarrow$}%
  \kern-0.6em\rightarrow\kern-0.6em%
  \raisebox{-.3ex}{$\rightarrow$}}}

\title{Topological algebra of symplectic geometry of symmetric powers}
\author{Vivek Shende and Peng Zhou}
\date{}

\begin{document}

\maketitle

\begin{abstract}
    To a noncompact orientable surface with no closed boundary, we associate the sum of Fukaya categories of (Liouville sectors associated to) its symmetric powers.  We construct sectorial covers with the combinatorics of the bar resolution to show this association extends to an open 2d topological field theory -- without naming a Lagrangian, let alone  a holomorphic disk.  
    In particular, we recover results of Rouquier and Manion on extending  Heegaard-Floer theory down to an interval.  
\end{abstract}


\section{Introduction}

The `Heegaard-Floer' homology of Oszvath and Szabo, now a fundamental theoretical and computational tool in low dimensional topology, is by definition constructed from the Floer homology of certain Lagrangians in symmetric products of surfaces \cite{Ozsvath-Szabo-3manifolds, Ozsvath-Szabo-knots}.  The theory was motivated by four dimensional gauge theory -- and indeed,  provides four dimensional invariants \cite{Ozsvath-Szabo-four-manifolds} -- but is also  luxuriously computable \cite{sarkar-wang, manolescu-ozsvath-sarkar}.  Such computations have allowed at least the simplest `hat' variant to be extended down to categories associated to two  dimensional punctured surfaces \cite{Lipschitz-Ozsvath-Thurston}, which in turn have been shown to have some gluing properties along one dimensional intervals \cite{Douglas-Manolescu, Douglas-Lipschitz-Manolescu, rouquier-manion}.  

Here we give a purely geometric argument that the categories associated to surfaces form a 2d open topological field theory. This result relies on no past or present holomorphic disk computations, is fully coherent, and independent of  coefficients.  The following category organizes our discussion: 

\begin{definition}
    Let $\mathscr{O}$ be the topological category of oriented, possibly disconnected, finite type,  {\em noncompact} surfaces-with-boundary, all of whose boundary components are noncompact (i.e.  $\R$ rather than $S^1$). Morphisms are proper inclusions for which the preimage of a nonempty boundary component is a union of nonempty boundary components.   We give this category the symmetric monoidal structure of disjoint union.
 
\end{definition}

Up to contractible choices, we could equivalently define the objects of $\mathscr{O}$ to be compact surfaces, of which each connected component has boundary, and each boundary component contains a possibly zero number of ``space-like'' intervals, the remaining boundary being ``time-like''.    The morphisms are  inclusions carrying time-like boundary to time-like boundary, and each space-like boundary either to the interior, or onto a space-like boundary.

Observe that $I = T^*[-1,1]$ is an algebra object in $\mathscr{O}$:  all structure maps are given by `putting intervals next to each other', or in other words, taking cotangent bundle of the structure maps of the 1-dimensional little disks operad.  Given any object $\Sigma \in \mathscr{O}$, a choice of boundary of $\Sigma$ gives it structures of left and right $I$-module (depending on which end of $T^*[-1,1]$ we consider gluing to it). 
Similarly,  $\coprod_n T^*[-1,1]$ carries an algebra structure, for which surfaces with $n$ chosen and ordered boundary components serve as modules.

We will be interested in symmetric monoidal functors 
$\mathcal{F}: (\mathscr{O}, \sqcup) \to (\mathcal{C}, \star)$; such functors necessarily respect the algebra and module structures above.   
Recall that given an algebra object $A$ of $(\mathcal{C}, \star)$, and left and right $A$-modules $L, R \in \mathcal{C}$, 
we write $L \otimes_A R$ for the co-equalizer of the two actions of $A$ on $L \star R$; we say $\mathcal{C}$ admits tensor products if such co-equalizers always exist. 

\begin{definition} \label{def: gluing}
    By a gluing configuration, we mean a cover $\Sigma = \Sigma_L \cup \Sigma_R$ where the inclusions $\Sigma_L, \Sigma_R \to \Sigma$ are maps in $\mathscr{O}$, and the overlap is a union of strips, i.e. $A:=\Sigma_L \cap \Sigma_R \simeq \coprod T^*[-1,1]$.      
    If $(\mathcal{C}, \star)$ admits tensor products, we say $\mathcal{F}: (\mathscr{O},\sqcup) \to (\mathcal{C}, \star)$ satisfies gluing if, given any gluing configuration,  
    \begin{equation} \label{gluing} \mathcal{F}(\Sigma_L) \otimes_{\mathcal{F}(A)} \mathcal{F}(\Sigma_R) \to \mathcal{F}(\Sigma)\end{equation}
is an isomorphism.
\end{definition}

Such a functor, satisfying gluing, gives an open 2d TFT valued in the `Morita category' internal to $(\mathcal{C}, \star)$, which sends the interval to $\mathcal{F}(I)$.\footnote{There are however additional structures.  For instance, the inclusion $\emptyset \to \Sigma$ induces pointing of all structures, as Johnson-Freyd requires for a `Heisenberg-picture'  TFT  \cite{Johnson-Freyd}.} 
(We recall that the Morita category has objects given by algebras in $\mathcal{C}$, morphisms given by bimodules, and composition given by tensor of bimodules over algebras.)
We note also that while the above formulation does not directly express cutting along a non-separating strip, we may achieve the same effect by cutting along two parallel such strips. 

\begin{example}
    Homology gives a functor $(\mathscr{O}, \sqcup) \to (\Z-\mathrm{mod}, \oplus)$, which satisfies gluing by the Mayer-Vietoris theorem.
    Because the maps in $\mathscr{O}$ are proper, the same is true for Borel-Moore homology.
\end{example}

The Borel-Moore version has a known categorification.  Let us write $\Sect_{2n}$ for the category of Liouville sectors.  The objects of this category were introduced in \cite{GPS1, GPS2}: they are noncompact exact symplectic manifolds-with-boundary, satisfying certain conditions (we recall these below in Section \ref{sec: sector review}).  

\begin{example}  \label{GPS surfaces}
The `covariant functoriality' of \cite{GPS1}, specialized to the case of surfaces, 
asserts that (choosing any from the contractible space of Liouville sector structure on $\Sigma$,\footnote{Or in other words, inverting the equivalence  of topological categories $\Sect_2 \to \mathcal{O}$.} and) taking partially wrapped Fukaya category  gives a  functor $\Fuk: (\mathscr{O}, \sqcup) \to (\mathrm{Cat}, \oplus)$,  where $\mathrm{Cat}$ is any appropriate category of stable linear categories.\footnote{We work throughout in the modern context of $(\infty,1)$-categories, for which the foundational works are \cite{lurie-topos, lurie-algebra}. }  The `sectorial descent' of \cite{GPS2}, again specialized, asserts that the above $\Fuk$ satisfies gluing. 
\end{example}

One can view the present article as providing an `exponentiated' version of Example \ref{GPS surfaces}. 

\vspace{2mm}

We will write $\Sect_{2n}$ for what the casual reader can imagine is the topological category whose objects are  Liouville sectors, and whose morphisms are proper inclusions satisfying certain conditions, as described in \cite{GPS1,GPS2}.  More precisely, we will take the following  simplicial variant: objects are Liouville sectors, 1-morphisms $W_0 \to W_1$ are symplectic fibrations of sectors $\pi: \widetilde{W} \to T^*[0,1]$ with an identification $\pi^{-1}(1) = W_1$ and sectorial embedding $W_0 \to \pi^{-1}(0)$, and similarly for higher simplices.  Such a setup was described in \cite{Lazarev-Sylvan-Tanaka}.  As noted there, the functoriality, invariance, and K\"unneth results established in \cite{GPS1, GPS2} imply that taking partially wrapped Fukaya category factors through such  $\mathrm{Sect}_{2n}$. 

We recall also that the product of sectors can canonically (up to contractible choices) be given the structure of a sector; we regard this as a product $\Sect_{2n} \times \Sect_{2m} \to \Sect_{2n+2m}$.  
We will consider the category  $\Sect^\N := \prod_n \Sect_{2n}$;
an object is a sequence of (a priori unrelated) Liouville sectors, one of each dimension $X = (X_0, X_1, \ldots)$. 
Cartesian product induces a symmetric monoidal structure $(X \times Y)_n = \coprod_{a+b = n} X_a \times Y_b$. 

The set-theoretic symmetric power of a surface with boundary is not even a manifold with boundary, let alone a Liouville sector.  Nevertheless, we show: 

\begin{theorem} \label{thm: functor}
    To each $W \in \Sect_2$, we may associate a symmetric power $\Sym^{(n)}(W) \in \Sect_{2n}$, agreeing with the set-theoretic symmetric power for surfaces without boundary. 
    This assignment extends to a functor 
    $\Sym: (\mathscr{O}, \sqcup) \to (\Sect^{\N}, \times)$
     given on objects by $\Sigma \mapsto \prod_n \Sym^{(n)}(\Sigma)$. The resulting sectors are Weinstein.
\end{theorem}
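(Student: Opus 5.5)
We will build $\Sym^{(n)}(W)$ by first passing to the interior and then cutting out a sectorial boundary. Given $W \in \Sect_2$, the first step is to take its convexification, i.e.\ the completed Liouville surface $\overline{W}$ obtained by attaching the "wrapping" collar along each boundary component $\R$.

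The set-theoretic symmetric power $\Sym^n(\overline{W})$ is a smooth complex manifold, since $\overline{W}$ is a Riemann surface. Choose a complex structure $j$ on $\overline{W}$ and an exhausting plurisubharmonic function $\phi$. Then $\Sym^n(\overline W)$ carries a Kähler form which agrees, away from the diagonal, with the quotient of $\sum_i \pi_i^*\omega$. Concretely, we perturb near the diagonal using the standard Perutz/Varouchas smoothing. Because $\Sym^n(\C) = \C^n$ via elementary symmetric functions, the smoothed form has a primitive, and the potential $\sum \phi(z_i)$ smoothed near diagonals is exhausting and plurisubharmonic. It is Morse after perturbation, which makes $\Sym^n(\overline W)$ Weinstein. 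This settles the case of surfaces without boundary.

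The sectorial structure is encoded by a stop. Each boundary component of $W$ corresponds to a point $p \in \partial_\infty \overline W$, or equivalently to a boundary-parallel arc. Following the GPS characterization, a sector is determined by a Liouville manifold together with a hypersurface at infinity, or by a sectorial hypersurface. We take as stop in $\Sym^n(\overline W)$ the divisor-type hypersurface
\[
\textstyle\bigcup_{p} \{p\} \times \Sym^{n-1}(\overline W)
\]
at infinity. Concretely, we choose a function $\pi: \overline{W} \to \C$ near each end that is the standard projection to a half-plane. The induced map $\Sym^n \to \Sym^n(\C)$, composed with the symmetric-function "largest real part" coordinate, gives a linear-at-infinity function $I$ whose Hamiltonian flow is outward-pointing. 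Truncating at $\{I \le c\}$ yields $\Sym^{(n)}(W)$, which is a Liouville sector in the sense of GPS. This is checked via the criterion $Z I = \alpha I$ near the boundary, which follows from the product structure. When there are several boundary components, the corresponding truncations intersect, and we handle this with the GPS multi-boundary-face ("sectorial corners") formalism. Weinsteinness survives because the stop is itself a symmetric power of lower degree, which is Weinstein by induction on $n$.

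Functoriality is then established as follows. A morphism $\Sigma \to \Sigma'$ in $\mathscr{O}$ is, up to contractible choice, a sectorial inclusion $W \to W'$ of surfaces. We must produce $\Sym^{(n)}(W) \to \Sym^{(n)}(W')$ together with higher coherences. Here the simplicial model of $\Sect_{2n}$ helps. A 1-simplex is a fibration over $T^*[0,1]$, and the symmetric power of a family of surfaces over $T^*[0,1]$ gives a family, built fibrewise, for which the construction above works in families since all choices are contractible. Higher simplices are handled in the same way. Monoidality follows from $\Sym^n(A \sqcup B) = \coprod_{a+b=n} \Sym^a A \times \Sym^b B$, which is compatible with the sector structures up to the contractible choice of product sector structure.

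\textbf{Main obstacle.} The hard step is verifying that the truncated symmetric power genuinely satisfies the sector convexity condition near the diagonal intersected with the boundary. Points of the boundary can collide there, and the function $I$ is not smooth where two points' real parts coincide. I would first try replacing "max" by a smoothed symmetric function such as $\frac{1}{k}\log\sum e^{k\,\Re \pi(z_i)}$. I would then check that the Hamiltonian vector field remains outward-pointing and that $\omega$ is standard near $\{I=c\}$. If this fails, the fallback is to define the boundary via the GPS stop-removal equivalence instead.
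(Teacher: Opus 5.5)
The boundaryless case of your plan, Richberg/Varouchas smoothing of $\sum_i\phi(z_i)$ as in Perutz, is broadly in line with the paper. The sectorial step, however, has a genuine gap, exactly at the obstacle you flag, and your proposed fix does not work.

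\textbf{The smoothed max is not smooth on $\Sym^n\C$.} Take $n=2$ and the holomorphic coordinates $u=(z_1+z_2)/2$, $s=w^2$ with $w=(z_1-z_2)/2$ on $\Sym^2\C$. Then $\frac1k\log(e^{k\Re z_1}+e^{k\Re z_2})=\Re u+\frac1k\log(2\cosh(k\Re w))$. Since $(\Re w)^2=\frac12(|s|+\Re s)$, this function has a $\frac k4|s|$ cone singularity along the diagonal. So its level sets are not smooth hypersurfaces where two points collide on the wall.

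This is not an artifact of your particular smoothing. A symmetric function $G(x_1+x_2,(x_1-x_2)^2)$ of the real parts is smooth on $\Sym^2\C$ only if $G$ is flat in its second variable at $0$. In other words, near the diagonal it may see only the center of mass.

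\textbf{No product structure is available either.} Near the diagonal the smoothed K\"ahler form is not a cotangent form. Once the boundary function depends on the relative positions of nearby points, nothing controls $ZI$ or the sign of $dI$ on the characteristic foliation.

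\textbf{Secondary issues.} You conflate two roles. The real-part coordinate is a Liouville-invariant boundary-defining function, whose Hamiltonian flow is tangent to its own level sets. A witness is instead a linear-at-infinity conjugate momentum, and one cannot truncate at a level set of that. Also, the stop $\bigcup_p\{p\}\times\Sym^{n-1}(\overline W)$ and the stop-removal fallback remain heuristics; they are never made into an actual hypersurface at infinity.

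\textbf{The missing idea} is to make the Liouville structure and the boundary compatible through centers of mass. The paper does this in two steps.
\begin{itemize}
\item[(i)] It fixes clustering rules and smooths $\phi_n$ inductively so that the potential is additive over cluster decompositions. Over the strips $T^*[-2,2]$ it is cm-standard: the standard cotangent potential $\sum_i\tau_i\Im(c_i)^2$ in the cluster centers $c_i$, plus an internal term.
\item[(ii)] It replaces ``all points on one side of $a$'' by the box decomposition of $\Sym^n\R$. Boxes have half-width $kb$, with $b\gg r_n$, around groups of $k$ points. Then $\partial U_{0,n}$ is the zero set of a smooth function of box centers, and no cluster meets the wall.
\end{itemize}

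With this in place, $\pi_n^{-1}(U_{0,n})$ is sectorial with witness $I(x,p)=p(\nabla\rho)$ on the center-of-mass cotangent factor. Weinstein-ness comes from exhibiting, inside the boundary, the hypersurface $F$ of points with vanishing cm-normal momentum, not from an induction on a stop. The locality of the construction makes $\Sym^{(n)}(\Sigma')=\pi_n^{-1}(U_{0,n})$ depend only on $\Sigma'$ while sitting inside $\Sym^n(\Sigma)$, and this is what supplies the maps for functoriality.

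Your functoriality paragraph truncates the symmetric power of a separately chosen completion $\overline W$. It does not explain why that truncation embeds into the one for $W'$ with the correct Liouville form.
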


Consider now a gluing configuration $\Sigma= \Sigma_L \cup_A \Sigma_R$.  Observe that there is a diagram in $\mathscr{O}$: 
\begin{equation} \label{bar diagram} \cdots \Sigma_L \sqcup A \sqcup A \sqcup \Sigma_R \,\, \substack{\rightarrow\\[-.6ex] \rightarrow \\[-.6ex] \rightarrow}   \,\, \Sigma_L \sqcup A \sqcup \Sigma_R \,\, \substack{\rightarrow\\[-.6ex] \rightarrow } \,\, \Sigma_L \sqcup \Sigma_R. \end{equation}
The maps are the action maps for the algebra object $A$ on itself and on the left and right modules $\Sigma_L$ and $\Sigma_R$.  
We will refer to this as the bar diagram for the gluing configuration, and indicate it as $\mathrm{Bar}(\Sigma_L|A|\Sigma_R)$. 
Given a cover of a topological space, $U = \bigcup U_i$, we will refer to the diagram formed by inclusions of overlaps as the \v{C}ech diagram for the cover, and denote it $\mathrm{\check{C}ech}(\bigcup U_i)$.

\begin{theorem} \label{thm: intro cover}
    Given a gluing configuration $\Sigma= \Sigma_L \cup_A \Sigma_R$, there is a (choice of Liouville form and) sectorial cover $\Sym^{(n)}(\Sigma) = \bigcup_{n_L + n_R =n} U_{n_L, n_R}$, and a (homotopy coherent) morphism of diagrams in $\Sect_{2n}$:
    $$\mathrm{\check{C}ech}(\bigcup U_{n_L, n_R}) \to \Sym^{(n)}(\mathrm{Bar}(\Sigma_L|A| \Sigma_R));$$
    moreover, each map (of sectors) between corresponding terms in the $\mathrm{Bar}$ and $\mathrm{\check{C}ech}$ diagrams is  deformation equivalent to an isomorphism. 
\end{theorem}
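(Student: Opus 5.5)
We build the cover by counting points. Fix the gluing configuration $\Sigma = \Sigma_L \cup_A \Sigma_R$ with $A \simeq \coprod T^*[-1,1]$. Choose a Liouville structure on $\Sigma$ adapted to the strips: on each strip $T^*[-1,1]$ with coordinate $t \in [-1,1]$, let the Liouville vector field be tangent to the level sets of $t$ near the cutting locus. Let $\Sigma_L^\circ = \Sigma_L \setminus A$ and $\Sigma_R^\circ = \Sigma_R\setminus A$. A point of $\Sym^{(n)}(\Sigma)$ is an unordered $n$-tuple. Heuristically, $U_{n_L,n_R}$ is the locus of configurations that can be separated by a vertical slice $\{t = c\}$ in $A$ with exactly $n_L$ points to the left and $n_R$ to the right. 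We make this precise using sectorial hypersurfaces built from the function $t$. Consider the "$k$-th order statistic" of the $t$-coordinates, i.e.\ the $k$-th smallest value of $\min(\max(t,-1),1)$, where points in $\Sigma_L^\circ$ count as $-1$ and points in $\Sigma_R^\circ$ count as $+1$. Take $U_{n_L,n_R}$ to be the region where a gap separates the $n_L$-th and $(n_L+1)$-th values. In sectorial terms, its boundary is the hypersurface where the $n_L$-th value equals $c$, for suitable nested constants $c$.

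The key steps are as follows.

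\textbf{Step 1.} Smooth these order-statistic functions into honest functions on the smooth symmetric power. Locally this uses elementary symmetric functions in the $t_i$, or equivalently the fact that near collisions $\Sym^{(n)}$ of a strip is $\Sym^{(n)}(\C)$, and a real slice of polynomials with roots on one side of a line is a clean condition. Then check that the resulting hypersurfaces are sectorial, meaning they admit characteristic foliations with the required cylindrical structure at infinity, and that they intersect cleanly. This is what makes $\{U_{n_L,n_R}\}$ a sectorial cover in the sense of \cite{GPS2}.

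\textbf{Step 2.} Identify the $k$-fold intersections. Intersecting $U_{n_L^{(0)},\dots}$ through $U_{n_L^{(k)},\dots}$ with $n_L^{(0)} < \dots < n_L^{(k)}$ imposes $k+1$ ordered gaps. The configuration therefore splits into blocks: $n_L^{(0)}$ points on the left, then $n_L^{(1)}-n_L^{(0)}$ points in a sub-strip of $A$, and so on. By Künneth for sectors, this intersection is deformation equivalent to
\[
\Sym^{(a_0)}(\Sigma_L)\times \Sym^{(a_1)}(A)\times\cdots\times\Sym^{(a_k)}(A)\times \Sym^{(a_{k+1})}(\Sigma_R),
\]
which is precisely the degree-$n$ component of $\Sym(\Sigma_L\sqcup A^{\sqcup k}\sqcup\Sigma_R)$. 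The monoidality of $\Sym$ from Theorem \ref{thm: functor} supplies the comparison map. The deformation to an isomorphism comes from stretching the sub-strips along $t$ and using that $\Sym^{(m)}$ of a strip cut into sub-strips is invariant under such isotopies.

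\textbf{Step 3.} Check compatibility with the face maps. Forgetting one gap in the Čech diagram corresponds to merging two adjacent blocks. On the Bar side this is exactly the action or multiplication map $A\sqcup A\to A$, or $\Sigma_L\sqcup A\to\Sigma_L$, given by juxtaposing intervals. We assemble the maps homotopy coherently by making every construction depend on the gap positions $c_0<\dots<c_k$. These range over contractible configuration spaces of points in an interval, and that contractible family gives the higher simplices in the simplicial $\Sect_{2n}$ of \cite{Lazarev-Sylvan-Tanaka}. Note that the Čech indexing by subsets of $\{0,\dots,n\}$ of size $k+1$ matches the degree-$n$ part of the Bar term with $k$ copies of $A$, summed over block sizes.

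I expect Step 1 to be the main obstacle: the sectoriality of the boundary hypersurfaces near the diagonals, where several points collide on the cutting slice. There I would first try the polynomial model, viewing $\Sym^{(m)}(T^*[-1,1]) \subset \Sym^{(m)}(\C)$. I would choose the Liouville form as a pushforward of the product form, corrected near the diagonal as in the construction behind Theorem \ref{thm: functor}, so that it remains invariant under real translation. I would then show that the number of roots with $\Re < c$ is locally constant off a hypersurface whose defining function has a Hamiltonian flow given by translation in $\Im$. That flow supplies the required $\R$-direction.
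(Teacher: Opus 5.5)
Your combinatorial skeleton agrees with the paper's: strictly increasing cut points indexed by $n_L$ (the paper's $a_0<\cdots<a_n$), intersections that split into blocks (Lemmas \ref{U intersections} and \ref{U product}), and \v{C}ech faces matching Bar faces. The gap is Step 1, which carries the content of the theorem; your sets cannot be made sectorial as proposed.

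First, near a collision on the cut, the order statistics are not smooth on $\Sym^n\C$. For $n=2$, use the holomorphic coordinates $s=z_1+z_2$, $d=(z_1-z_2)^2$. Then $\max(\Re z_1,\Re z_2)=\tfrac12\Re s+\tfrac12\sqrt{(\Re d+|d|)/2}$, which is only H\"older-$\tfrac12$ at $d=0$. So ``all roots in $\{\Re<c\}$'' is not a clean condition near a double root on the line.

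Symmetric functions of the $t_i=\Re z_i$ do not repair this. Already $\sum(\Re z_i)^2=\tfrac12(\Re s)^2+\tfrac14(\Re d+|d|)$ is not smooth. A smoothing of $\min/\max$ in the variables $t_i$ (e.g.\ by $M_\delta$) is, near the diagonal, a non-flat function of $(t_1-t_2)^2=(\Re d+|d|)/2$, so it is still not smooth.

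Second, there is a problem even away from the diagonal. Suppose two distinct points lie on the cut $\Re=c_k$, with $k-1$ of the others to the left and the rest to the right. In an ordered chart your $U_{k,n-k}$ is then locally $\{(x_1-c_k)(x_2-c_k)<0\}$: two opposite quadrants, whose closure is not a manifold with corners.

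The missing idea is the paper's box construction. Each block $\mathbf w$ of the box decomposition of $\Re(\mathbf z)$ gets the box $[c(\mathbf w)-|\mathbf w|b,\,c(\mathbf w)+|\mathbf w|b]$. The set $U^{\mathbf a}_{k,n-k}$ asks that $a_k$ avoid all boxes and have exactly $k$ points to its left, and the spacing $a_{i+1}-a_i>2b$ keeps these sets a cover (Lemma \ref{box opens cover}). The boundary is then where a single box touches $a_k$ from a definite side. This separates the two quadrants. Near collisions the boundary is cut out by a center of mass function (Proposition \ref{box opens have cm boundary}), which is smooth on $\Sym^n\C$.

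Your Liouville form is also too weak. Invariance under global real translation splits off only the total center of mass, but near a partial collision the boundary is a function of a sub-cluster's center. To get a witness $I=p(\nabla\rho_{\vec\tau})$ with $ZI=I$ at infinity (Lemma \ref{lm:sectorial-base-condition}), the form must be cm-standard on every $\Re^{-1}\Clus^{\vec\tau}(\R)$, i.e.\ split at every cluster scale. The paper builds such a potential inductively over cluster types, using Richberg smoothing and Varouchas patching (Propositions \ref{pp: smoothing-KP} and \ref{pp:smoothing-KP-general}). The same cluster additivity is what makes your Step 2 blocks a product of sectors rather than merely a product of sets.
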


The basic idea behind Theorems \ref{thm: functor} and \ref{thm: intro cover}  is the following: (1) sectorial covers for cotangent bundles can be obtained  by lifting covers of the base, (2) the identification $\C = T^*\R$ gives a map $\Sym^n \C \to \Sym^n \R$ which is a cotangent bundle away from the diagonal, which (3) as we will see, locally near each strata of the diagonal, is a product of a cotangent bundle and other factors, so (4) as long as our covers are `locally pulled back from the base of the cotangent bundle factor', they will be sectorial, and (5) there is a cover of $\Sym^n \R$ with the desired combinatorics.   

\begin{remark}    Xinle Dai has previously constructed a sectorial decomposition the second symmetric power of a surface \cite{Dai-sectorial}.  
The methods are  different, and, as pointed out to us by Denis Auroux, the resulting cover is different from (the $n=2$ case) of ours: both are three element covers, but ours contains a triple intersection, corresponding to $\Sigma_L \sqcup A \sqcup A \sqcup \Sigma_R$, and Dai's does not.  
\end{remark}

We now extract algebraic consequences. 
Let $k$ be a ring  (other coefficients for which the Fukaya categories of symmetric powers can be defined would also work).  We write  $\mathrm{St}_k^{\N}$ for the category of $\N$-graded stable presentable $k$-linear categories, i.e. objects are of the form $\mathcal{C} = \bigoplus_{n} \mathcal{C}_n$, and $\otimes$ for the graded version of the Lurie tensor, i.e. $(\mathcal{C} \otimes \mathcal{D})_n = \bigoplus_{a+b = n} \mathcal{C}_a \otimes \mathcal{D}_b$.\footnote{Apparently sometimes called `Day convolution for the  monoidal structure of addition on the natural numbers'.}   Our objects will always have $\mathcal{C}_0 = k-\mathrm{mod}$. 
It follows from the `K\"unneth' considerations in \cite{GPS2} that taking Fukaya category induces a symmetric monoidal functor 
$$\Fuk: (\Sect^{\N}, \times) \to (\mathrm{St}_k^{\N}, \otimes).$$
(We regard Fukaya categories, which originate as $A_\infty$ categories, as elements of $\mathrm{St}_k$ by passing to the (dg) category of modules and then taking the nerve.  We prefer to work in the dg or stable setting to avoid confusion around the tensor product of $A_\infty$ categories.)

\begin{theorem} \label{thm: gluing}
    $\Fuk \circ \Sym :  (\mathscr{O}, \sqcup) \to (\mathrm{St}_k^{\N}, \otimes)$ satisfies gluing.  
\end{theorem}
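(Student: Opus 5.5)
We work one graded piece at a time. Fix a gluing configuration $\Sigma = \Sigma_L \cup_A \Sigma_R$ and a degree $n$. Since $\otimes$ on $\mathrm{St}_k^{\N}$ is the graded Lurie tensor, the degree $n$ part of $\Fuk\Sym(\Sigma_L) \otimes_{\Fuk\Sym(A)} \Fuk\Sym(\Sigma_R)$ is the geometric realization of the degree $n$ part of the bar construction. In $\mathrm{St}_k$ tensor products over algebras are computed as colimits of simplicial bar objects, and $\otimes$ preserves colimits in each variable. So the relative tensor product is the colimit of $\Fuk$ applied to the simplicial diagram $\Sym^{(n)}(\mathrm{Bar}(\Sigma_L|A|\Sigma_R))$. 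Here we use Theorem \ref{thm: functor} and the monoidality of $\Fuk$ to identify $\Fuk\Sym(\Sigma_L \sqcup A^{\sqcup k}\sqcup \Sigma_R)_n$ with $\bigoplus_{a+b_1+\dots+b_k+c=n}\Fuk\Sym^{(a)}\Sigma_L\otimes\cdots\otimes\Fuk\Sym^{(c)}\Sigma_R$.

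Next we apply Theorem \ref{thm: intro cover}. It gives a sectorial cover $\Sym^{(n)}(\Sigma)=\bigcup U_{n_L,n_R}$ and a homotopy coherent map from its \v{C}ech diagram to $\Sym^{(n)}(\mathrm{Bar})$, each component of which is deformation equivalent to an isomorphism. Deformation equivalent sectors have equivalent Fukaya categories by the invariance results of \cite{GPS1,GPS2}, so the induced map of diagrams $\Fuk(\check{\mathrm{C}}\mathrm{ech}) \to \Fuk(\Sym^{(n)}\mathrm{Bar})$ is an objectwise equivalence. Their colimits therefore agree. Sectorial descent \cite{GPS2} says the colimit of $\Fuk$ over the \v{C}ech diagram of a sectorial cover is $\Fuk(\Sym^{(n)}\Sigma)$. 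Composing these equivalences identifies the bar colimit with $\Fuk\Sym^{(n)}(\Sigma)$.

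It remains to check that this composite equivalence is the gluing map \eqref{gluing}, i.e.\ the map induced by functoriality along the inclusions $\Sigma_L\sqcup A^{\sqcup k}\sqcup\Sigma_R\to\Sigma$. This requires that the cocone from the bar diagram to $\Sym^{(n)}(\Sigma)$ given by functoriality is compatible, after restricting along the morphism of diagrams, with the \v{C}ech cocone of inclusions $U_I\to\Sym^{(n)}\Sigma$. We would arrange this by building the cover so that each $U_{n_L,n_R}$ sits inside the image of $\Sym^{(n_L)}\Sigma_L\times\Sym^{(n_R)}\Sigma_R$ compatibly with the sectorial embeddings of Theorem \ref{thm: functor}. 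We would also note that descent requires the simplicial \v{C}ech diagram (or its semisimplicial restriction, which is cofinal); the ordered, interval-like combinatorics of the cover indexed by $n_L+n_R=n$ matches the bar diagram's face maps.

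The main obstacle is this last step: matching the homotopy coherent morphism of diagrams together with the two cocones in $\Sect_{2n}$, rather than merely matching the terms. It needs the coherence data of Theorem \ref{thm: intro cover} to be compatible with the functor of Theorem \ref{thm: functor}. A secondary point is finiteness: the \v{C}ech diagram has only the $n+1$ opens, while the bar diagram is infinite. This is resolved because in degree $n$ every summand with some $b_i=0$ is degenerate, so only finitely many nondegenerate simplices contribute, and these correspond to the $(k+1)$-fold intersections.
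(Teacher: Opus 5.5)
Your proposal is correct and follows the paper's proof. You identify the degree-$n$ relative tensor product with the colimit of $\Fuk$ on $\Sym^{(n)}(\mathrm{Bar}(\Sigma_L|A|\Sigma_R))$, replace it by the \v{C}ech diagram of the sectorial cover via the objectwise-equivalent morphism of Theorem~\ref{thm: intro cover}, and conclude by sectorial descent; your extra care about matching the cocones with the gluing map, and about discarding degenerate summands (which the paper makes explicit only in its ``Cutting corners'' computation), fills in points the paper leaves implicit. One small correction: the descent theorem of \cite{GPS2} is stated for \emph{Weinstein} sectorial covers, so when you invoke it you should also cite the Weinstein clause of Theorem~\ref{thm: functor}, i.e.\ Proposition~\ref{weinstein-ness}.
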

\begin{proof}
    Consider a gluing configuration. 
    Then (the $n$'th piece of) $\Fuk(\Sym(\Sigma_L)) \otimes_{\Fuk(\Sym(A))} \Fuk(\Sym(\Sigma_R))$ is computed by the colimit of the bar diagram
    $\Fuk(\Sym^{(n)}(\mathrm{Bar}(\Sigma_L|A| \Sigma_R)))$.  Theorem \ref{thm: intro cover} identifies this bar diagram of Fukaya categories with the Cech diagram of Fukaya categories for a sectorial cover, which, by sectorial descent \cite{GPS2}, agrees with the Fukaya category of the total space $\Sym^{(n)}(\Sigma)$. 
\end{proof}

\begin{remark} \label{rem:homology}
    The same argument with sectorial descent replaced with Mayer-Vietoris shows that 
    \begin{eqnarray*}
        H_* \Sym : (\mathscr{O}, \sqcup) & \to & (k-\mathrm{mod}, \otimes)
        \\
        \Sigma & \mapsto & \bigoplus_n H_*(\Sym^{(n)}(\Sigma))
    \end{eqnarray*} satisfies gluing.  Because maps in $\Sect$ are proper embeddings, we may also take Borel-Moore homology.  
    Sending a Lagrangian to its fundamental class gives a natural transformation 
    $K(\Fuk(\Sym(\Sigma))) \to H_*^{BM}\Sym(\Sigma)$, which is injective by \cite{Lazarev-injective}.  
\end{remark}

\begin{remark}
    Theorem \ref{thm: gluing} as stated holds with coefficients in 2-periodic $\Z$-modules, since for this we need no additional structures on the $\Sym(\Sigma)$ in order to define Fukaya categories.  For another desired coefficient symmetric monoidal category $\mathcal{C}$ over which  
    the Fukaya category can be defined and satisfies sectorial descent, e.g. (not-periodized) $\Z$-modules,\footnote{The arguments in \cite{GPS2} are `geometric', and hence expected to be valid over any coefficients over which the   Fukaya category can be defined. (So, someday, spectra.)} Theorem \ref{thm: gluing} holds as stated, with the same proof, if one understands $\Fuk(\Sym(\Sigma))$ to be a family of categories over the space of $\mathcal{C}$-Maslov data\footnote{$\mathcal{C}$-Maslov data on a symplectic manifold $X$ is a null-homotopy of a certain tautological map $X \to B^2 Pic (\mathcal{C})$; it is the expected data needed to define the Fukaya category over $\mathcal{C}$.  See discussions in e.g.  \cite{lurie-rotation, nadler-shende, GPS3, CKNS, Gavela-Large-Ward}.} for $\prod \Sym^n (\Sigma)$.  
    Alternatively, we could formulate the result in terms of a functor from the category $\mathscr{O}_{\mathcal{C}}$ given by pulling back the space of $\mathscr{C}$-Maslov data along $\Sym: \mathscr{O} \to \Sect^\N$.   
\end{remark}

\begin{remark}
    It is clear from the proof that $\Fuk$ may be replaced by any functor which satisfies sectorial descent.
    Microsheaves offer one such category \cite{shende-microlocal, nadler-shende}, defined over any stable compactly generated symmetric monoidal category, e.g. spectra; and equivalent to  the Fukaya category when the latter is  defined \cite{GPS3}.
\end{remark}

We turn to questions of calculation.  Various models for $\Fuk(\Sym(\Sigma))$ and related categories have been obtained in previous works \cite{Lipschitz-Ozsvath-Thurston,  Auroux-bordered,
Douglas-Manolescu, Douglas-Lipschitz-Manolescu, rouquier-manion, Dyckerhoff-Jasso-Lekili, didedda}.  We will not attempt to rederive all of these here, but will explain how to recover the main result of Rouquier and Manion  \cite[Thm. 1.3.1]{rouquier-manion}, which asserts that (1) if $\Sigma$ is a surface with chosen interval boundary, $\Fuk(\Sym(\Sigma))$ is a representation of a categorification of $\mathfrak{gl}(1|1)^+$ and (2)
    there is a certain explicit monoidal structure $\ootimes$ on such representations, such that if $\Sigma$ is obtained by gluing some $\Sigma', \Sigma''$ to an open pair of pants, then 
    \begin{equation} \label{rouquier-manion formula}\Fuk(\Sym(\Sigma)) = \Fuk(\Sym(\Sigma')) \,\, \ootimes \,\Fuk(\Sym(\Sigma'')).\end{equation}
    Let us explain what this assertion looks like from our point of view. 
    Recall we write $I = T^*[-1,1] \in \mathscr{O}$.  
    Let us write $T$ for the open pair of pants, i.e., the  disk with three stops; it is an $I-(I \coprod I)$ bimodule. Given any functor $\mathcal{F}$ satisfying gluing, 
    the different ways of decomposing the disks with $n$ boundaries into pairs of pants give the associativity conditions showing that $\mathcal{F}(T)$ defines a lax coalgebra structure for $\mathcal{F}(I)$, and in particular an ($E_1$) monoidal structure $\star_T$ on the category of $\mathcal{F}(I)$-modules.     
    
    Since Theorem \ref{thm: gluing} promises that $\Fuk \circ \Sym$ satisfies gluing, we immediately recover  \eqref{rouquier-manion formula} with the categorified $\mathfrak{gl}(1|1)^+$ and $\ootimes$ replaced by 
    $\Fuk(\Sym(I))$ and $\star_T$.
    
    For a precise comparison, we use (for the first time in this article) actual knowledge of  $\Fuk(\Sym(I))$: 
    \begin{eqnarray*}
        \Fuk(\Sym^0(I)) & = & \Fuk(\bullet) = k-\mathrm{mod} \\
        \Fuk(\Sym^1(I)) & = & \Fuk(I) = k-\mathrm{mod} \\
        \Fuk(\Sym^{\ge 2}(I)) & = & 0.
    \end{eqnarray*}
    That is, $\Fuk\Sym(I)$ is a  monoidal category generated by a free degree one category, with monoidal square zero.\footnote{The categorified $\mathfrak{gl}(1|1)^+$ of  \cite{khovanov-gl12,
    rouquier-manion} is by design the monoidal category whose modules carry an endomorphism with a null-homotopy of its square.  If we take its modules in the world of stable categories (where we work) -- where a morphism being zero is a property, not a structure -- then said module category is equivalent to that of our $\Fuk\Sym(I)$.} 
    Graded modules for this monoidal category are, tautologically, sequences of stable k-linear categories $\cdots \to C^{-1} \to C^0 \to C^1 \to \cdots$, where the composition of consecutive maps is zero.  
    Such  `categorical complexes' were recently studied by Christ, Dyckerhoff, and Walde \cite{Christ-Dyckerhoff-Walde}.
    
    A first appealing feature of this correspondence:  as the Fukaya categories of all nonzero symmetric powers of a half disk vanish, one can easily check  
     $\Fuk\Sym(D) \otimes_{\Fuk\Sym(I)} \cdot : \Fuk\Sym(I)-\mathrm{mod} \to \mathrm{St}_k$ is a `semi-cohomology': it sends 
    a complex $(C,d)$ as above to the graded category $(C / \mathrm{Image}(d))$.  
    Going further, the operation  $\star_T$ must correspond to some monoidal structure which takes two categorical complexes and returns a single one.     
    In fact \cite{Christ-Dyckerhoff-Walde} provides just such an operation: totalization of the tensor product bicomplex, which is given by a much more reasonably sized pushout, and can be checked to be the translation to stable categories of the  $\ootimes$ of Rouquier and Manion (see Remark \ref{rem: RM versus CDW} below).
    We show, by algebra:  

    \begin{theorem} \label{thm:cdw} (\ref{thm:tensor to pushout})
        The identification of $\Fuk(\Sym(I))$-modules with categorical complexes carries the monoidal structure $\star_T$ to the coproduct totalization of \cite{Christ-Dyckerhoff-Walde}.  
    \end{theorem}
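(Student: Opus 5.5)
The plan is to compute $\star_T$ algebraically, as a relative tensor product, using only $\Fuk\Sym(I)$, $\Fuk\Sym(T)$, and the gluing of Theorem \ref{thm: gluing}. Write $A = \Fuk\Sym(I)$. This is the graded monoidal category with $A_0 = A_1 = k\text{-mod}$ and $A_{\ge 2}=0$; equivalently, it is free on one degree-one generator $d$ subject to $d^2 \simeq 0$. Then $\star_T$ sends two $A$-modules $M, N$ to
\[ \Fuk\Sym(T)\otimes_{A\otimes A}(M\otimes N), \]
viewed as a left $A$-module via the remaining end of $T$.

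The first step is to identify the bimodule $B := \Fuk\Sym(T)$ explicitly as an $A$--$(A\otimes A)$ bimodule, i.e.\ as a categorical complex with extra structure. I would avoid holomorphic disks by cutting $T$ itself into strips and half-disks via Theorem \ref{thm: gluing}. The disk with three stops is glued from $I$-pieces, so $\Fuk\Sym^n(T)$ is generated by products of arcs ending on the three boundary intervals. Concretely, $\Fuk\Sym^0(T)=k\text{-mod}$, $\Fuk\Sym^1(T)$ is the $A_2$-quiver category, and $\Fuk\Sym^2(T)=k\text{-mod}$. The two right $A$-actions and the left $A$-action insert one arc at the respective boundary interval.

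The second step is to reduce the coend $B\otimes_{A\otimes A}(M\otimes N)$ to something finite. The key observation is that $B$, as a right $A\otimes A$-module, admits a short resolution by free modules. Since $A\otimes A$ is generated by $d_1, d_2$ with $d_1^2=d_2^2=0$, I would exhibit $B$ as a cofiber, or iterated pushout, of free modules $A\otimes A$ shifted in degree. The left action of $d$ on $B$ would be identified with $d_1+d_2$, realized as a sum of the two sides of a pushout rather than a literal sum. Tensoring this presentation with $M\otimes N$ then expresses $M\star_T N$ in each degree $n$ as a pushout of $\bigoplus_{a+b=n} M^a\otimes N^b$ glued along terms $M^a\otimes N^{b}$ via the two differentials. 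I would then match this with the Christ--Dyckerhoff--Walde coproduct totalization of the bicomplex $M^\bullet\otimes N^\bullet$, whose $n$-th term is exactly such a pushout (their ``reasonably sized'' model).

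The main obstacle is the identification of the left $A$-action on the result, including its null-homotopy $d^2\simeq 0$, with the totalized differential. In the stable setting, vanishing of a composite is a property. However, the total differential of a totalization involves a sign, or homotopy, coherence: $d_1d_2$ must be matched against $-d_2d_1$. That coherence must come from a specific Koszul-type braiding in $B$, visible as the interaction of two arcs in $\Sym^2$ of the disk. I would handle this by computing $\Fuk\Sym^2(T)$ and checking, through the gluing description along two strips, that the square formed by the two actions commutes up to the swap forced by $\Sym$. This is the same braiding that gives the sign in $\gl(1|1)$-type tensor products. Finally, I would verify compatibility with associativity only at the level required by the statement, namely the identification of the binary operation.
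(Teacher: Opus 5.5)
There are two genuine gaps.

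\textbf{The input $\mathbb{T}$.} You propose to compute $\mathbb{T}=\Fuk\Sym(T)$ by cutting $T$ into strips and half-disks via Theorem \ref{thm: gluing}. No such decomposition exists. Gluing along a strip joins a disk with $a$ boundary lines to one with $b$ and produces one with $a+b-2$. So pieces with at most two boundary lines never assemble to the three boundary lines of $T$; this is why the paper treats the pair of pants as a primitive building block.

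The structure of $\mathbb{T}$ therefore has to come from separate geometric input. This includes $\mathbb{T}^1=\mathbf{T}$, $\mathbb{T}^2=\mathbf{k}$, the explicit functors $\delta^0_{[i]},\delta^1_{[i]}$, and also $\mathbb{T}^{\ge 3}=0$, which you need but do not state. The paper obtains it in three ways:
\begin{itemize}
\item by displacing Lagrangians with the Hamiltonian $\Im W_{n,k}$;
\item from the single Morse critical point of $W_{3,2}$ on $\Sym^2\C$;
\item from forward-stoppedness of $T^*I\times T^*I\to\Sym^2(D_3)$.
\end{itemize}

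Relatedly, the sign/Koszul coherence you single out as the main obstacle is not where the difficulty lies. In the stable setting $d^2\simeq 0$ is a property, not a structure. The total differential is simply what the third action $\delta_{[3]}$ induces. Note that $\delta^0_{[3]}(V)$ is, up to shift, the cone of the canonical map $\delta^0_{[2]}(V)\to\delta^0_{[1]}(V)$, so your ``$d_1+d_2$'' is already encoded in $\mathbf{T}$.

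\textbf{The algebraic core.} Your strategy is sound in outline: present $\mathbb{T}$ as a colimit of free right modules, then commute $-\otimes_{\mathbf{k}[\epsilon]\otimes\mathbf{k}[\epsilon]}M$ past that colimit. Done correctly, it would be a genuinely different and arguably shorter route than the paper's. The paper instead writes the degree-$n$ part as the colimit of the bar diagram on a barycentrically subdivided $n$-simplex. It then cuts off the corners $(k|n-k)$, $k\ge 2$, one at a time, by showing each is the colimit of its link.

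However, the resolution you describe, by shifted copies of $E:=\mathbf{k}[\epsilon]\otimes\mathbf{k}[\epsilon]$, is not the right one, and your target formula is wrong as stated. $\mathbb{T}$ is not free on its degree-zero piece. In degree one the action map is $\delta^0_{[2]}\oplus\delta^0_{[1]}:\mathbf{k}\oplus\mathbf{k}\to\mathbf{T}$, which generates but is not an equivalence: there are nonzero maps $\delta^0_{[2]}(V)\to\delta^0_{[1]}(V)$.

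The presentation that works uses the free module on $\mathbf{T}$ itself. Writing $C\langle 1\rangle$ for a category placed in degree one,
\[
\mathbb{T}\;\simeq\;\bigl(\mathbf{T}\langle 1\rangle\otimes E\bigr)\coprod_{(\mathbf{k}\oplus\mathbf{k})\langle 1\rangle\otimes E} E .
\]
Checking this degreewise is exactly the content of the paper's corner cuts:
\begin{itemize}
\item in degree $2$, the pushout of $\mathbf{T}\otimes(\mathbf{k}_1\oplus\mathbf{k}_2)\leftarrow(\mathbf{k}\oplus\mathbf{k})\otimes(\mathbf{k}_1\oplus\mathbf{k}_2)\to\mathbf{k}$ must be $\mathbf{k}$;
\item in degree $3$, the image of $\mathbf{k}\oplus\mathbf{k}$ must generate $\mathbf{T}$.
\end{itemize}
Tensoring with $M$ then gives $(\mathbf{T}\otimes M^{n-1})\coprod_{M^{n-1}\oplus M^{n-1}}M^n$, which is the Christ--Dyckerhoff--Walde formula.

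Your stated target does not match it: it glues the $M^a\otimes N^b$ along the two differentials with no $\mathbf{T}$ factor. It is the arrow category $\mathbf{T}$ that supplies the morphism $\epsilon_2^R(m)\to\epsilon_1^R(m)$ in the description of objects in Remark \ref{rem: RM versus CDW}.
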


    We also give a  totalization for a (singly graded) categorical complex carrying two commuting degree one differentials, and prove it matches the  gluing of two ends of the pair of pants to a not-necessarily-disconnected surface with two marked boundaries, or more generally, to an arbitrary $\Fuk(\Sym(I)) \otimes \Fuk(\Sym(I))$-module.  

    Any surface can be assembled by attaching strips to pairs of pants and capping by a half disk.  And so:
    
    \begin{corollary}
        For any $\Sigma \in \mathscr{O}$, the category $\Fuk(\Sym(\Sigma))$ is the semicohomology of an iterated totalization of a tensor power of $\Fuk(\Sym(I))$.
    \end{corollary}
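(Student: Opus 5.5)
The corollary follows by induction on a handle-type decomposition of $\Sigma$, combining Theorem \ref{thm: gluing} with Theorem \ref{thm:cdw} and its two-differential variant. First I fix the decomposition. Every $\Sigma \in \mathscr{O}$ (possibly after isotopy, which $\Fuk\circ\Sym$ respects since it is a functor on the topological category $\mathscr{O}$) is obtained from finitely many copies of the open pair of pants $T$ and half disks $D$ by gluing along strips $I = T^*[-1,1]$. To see this, take a ribbon-graph spine of $\Sigma$ with trivalent vertices, thicken each vertex to a copy of $T$ and each edge to a strip, and cap off any unused stops by half disks. Gluings are either separating, i.e.\ gluing configurations in the sense of Definition \ref{def: gluing}, or non-separating. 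A non-separating gluing of two boundaries of one surface is handled, as remarked after Definition \ref{def: gluing}, by gluing along two parallel strips. Equivalently, it is the gluing of a $T$-leg pair to a connected surface with two marked boundaries.

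Next I apply Theorem \ref{thm: gluing} repeatedly. I would order the gluings so that all pairs of pants and strips are assembled first into a surface $\Sigma^\circ$ with some free boundary strips, and the half disks are attached at the end. Each step of the first stage is an application of Theorem \ref{thm: gluing}: $\Fuk\Sym$ of the new surface is the relative tensor product over $\Fuk\Sym(I)$ (or over $\Fuk\Sym(I)\otimes\Fuk\Sym(I)$ in the two-strip case) of $\Fuk\Sym$ of the pieces. Under the identification of $\Fuk\Sym(I)$-modules with categorical complexes, Theorem \ref{thm:cdw} turns each separating gluing with a pair of pants into the coproduct totalization of \cite{Christ-Dyckerhoff-Walde}. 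The two-boundary gluing turns into the stated totalization of a categorical complex with two commuting differentials. Starting from $\Fuk\Sym(T)$ and strips, which contribute tensor factors of $\Fuk\Sym(I)$, I obtain $\Fuk\Sym(\Sigma^\circ)$ as an iterated totalization of a tensor power of $\Fuk\Sym(I)$, viewed as a multi-module.

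Then I cap off. Each half disk contributes $\Fuk\Sym(D)\otimes_{\Fuk\Sym(I)}(-)$, which is the semicohomology functor $C\mapsto C/\mathrm{Image}(d)$ discussed before Theorem \ref{thm:cdw}. Applying this at the remaining marked boundaries gives the claimed description of $\Fuk(\Sym(\Sigma))$.

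The main obstacle is bookkeeping rather than geometry. Expressing $\Fuk\Sym(T)$ itself through $\Fuk\Sym(I)$ takes a little care: $T$ is $I$ glued to two strips, and $\Fuk\Sym(T)$ is the bimodule that induces $\star_T$. One must also check that the relative tensor products can be performed in any order, so that the iterated totalizations are well defined. Both points follow from associativity of relative tensor products in $(\mathrm{St}_k^{\N},\otimes)$ together with functoriality of $\Fuk\circ\Sym$ on $\mathscr{O}$.
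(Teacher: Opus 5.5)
Your outline is essentially the paper's own argument: the paper justifies the corollary only by the remark that any surface is assembled by attaching strips to pairs of pants and capping with half disks, which is then fed through Theorem \ref{thm: gluing}, Theorem \ref{thm:cdw} and its two-differential version, and the semicohomology description of capping. One slip needs fixing: $T$ is not $I$ glued to two strips. What is true is that $T$ with strips glued onto two legs is again $T$, so $\Fuk\Sym(T)\simeq\Fuk\Sym(I)\star_T\Fuk\Sym(I)$ is itself a totalization of $\Fuk\Sym(I)^{\otimes 2}$. Likewise, a strip joining two free legs of one component should be rewritten via $T\cup_I D\simeq I$ as a two-leg pants gluing followed by a cap, so that every step is literally covered by Theorem \ref{thm:tensor to pushout}.
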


    We conclude with a remark about   future directions.  The various symplectic approaches to Khovanov homology and its relatives \cite{Seidel-Smith, manolescu1, manolescu2, 
    abouzaid-smith-arc, abouzaid-smith:khovanov, 
    aganagic-knot-2, aganagic-icm,
    ALR,
    ADLSZ, 
    lepage-shende}
     involve spaces which admit natural maps to symmetric powers of surfaces.  In an ongoing joint project with Aganagic and LePage, we will apply a generalization of the methods developed here to construct similar sectorial decompositions for such spaces,  and construct Rouquier's monoidal structure (\cite{Rouquier-lecture}) for general $\mathfrak{g}^+$, by the analogue of the $\star_T$ above, and establish their expected properties.
    In another direction, we expect  similar `exponentiated sectorial descent' theorems can be proven in the `higher dimensional Heegaard-Floer' theory \cite{Colin-Honda-Tian}, as would be compatible with the calculation \cite{Honda-Tian-Yuan} and of presumptive representation theoretic relevance visavis \cite{Yuan-link}.

    \vspace{2mm} \noindent{\bf Acknowledgements.} 
This article originates in our ongoing work with Mina Aganagic and Elise LePage on understanding the categorified quantum groups and their representation theory via the Fukaya categories of certain string-theoretically motivated moduli spaces, and owes a great deal to discussions with them.  We  thank Mikhail Kapranov for a suggestion which led to the formulation in terms of $\mathscr{O}$.  Finally, let us also mention that Rapha\"el Rouquier asked the first-named author, circa 2018, whether the ideas around sectorial descent could be useful for constructing $\ootimes$.

        V.S. is supported by  Villum Fonden Villum Investigator grant 37814.

\vspace{2mm} \noindent
{\bf Notation.}
For any set $M$, we write $\Sym^n M$ for the space of unordered tuples, possibly with multiplicity,   
$\mathbf{z} = (z_1, \cdots, z_n)  \in \Sym^n(M)$.  We write $|\mathbf{z}|=n$ for the number of points.  For $\mathbf{z}_1 \in \Sym^{n_1}(M)$ and $\mathbf{z}_2 \in \Sym^{n_2}(M)$, we write $\mathbf{z}_1 + \mathbf{z}_2 \in \Sym^{n_1+n_2}(M)$ for the  union-with-multiplicity.

For a partition $\lambda \vdash n$, with $\lambda = (\lambda_1, \lambda_2, \ldots, \lambda_k)$,  we write $\Sym^\lambda(M) \subset \Sym^n(M)$ for the locus consisting points with multiplicities $\lambda_1, \ldots, \lambda_k$.  This is a stratification:  $$\Sym^n(M) = \coprod_{\lambda \vdash n} \Sym^\lambda(M).$$
We write $\lambda \le \mu$ when $\lambda$ is obtained by combining some of the parts of $\mu$; so $(1,1,\ldots,1)$ is the largest partition and $(n)$ the smallest.   We have:
\begin{equation}  \label{partition ordering captures closure} 
\Sym^{\le \mu}(M) := \bigcup_{\lambda \le \mu} \Sym^\lambda(M) = \overline{\Sym^\mu(M)}\end{equation}

\section{Clustering rules}\label{sec: clustering rules}

One fundamental technical tool of the present article is a system of coordinates on symmetric products which allow us to separate dependence on the  normal and tangential directions to the big diagonal.  Were the diagonal smooth (it is not), it would suffice to fix a metric and choose a tubular neighborhood.
Let us elaborate on the basic challenge.  Given a point on the big diagonal, $\mathbf{z} = \sum n_i z_i$ with $n_i > 1$, we might consider a class of nearby configurations 
$\mathbf{z} = \sum \mathbf{z}_i$, with $|\mathbf{z}_i| = n_i$ and $\mathbf{z}_i$ centered at $z_i$.  It is natural to view  variation of the $\mathbf{z}_i$ as tangential to the diagonal, and internal  variation constituents of $\mathbf{z}_i$ (constrainted to remain centered at $z_i$) as normal to the diagonal.  
Of course, each $\mathbf{z}$ admits many expressions of the form
$\mathbf{z} = \sum \mathbf{z}_i$.  In this section we will describe a certain class of such decompositions (clusterings), in order to define a class of functions on symmetric powers, which depend only on the center of masses of clusterings.

We proceed with the construction. 
Let $M$ be a smooth Riemannian manifold with distance function $\dist: M \times M \to \R$.  Recall that a subset $U \subset M$ is said to geodesically convex if any two points in $U$ can be connected by a unique geodesic, and the distance square function $\dist(-,x)^2$ restricts to any length-parametrized geodesic is a convex function.
The radius of convexity at a point $p$ is the radius of the largest geodesically  convex open ball around $p$; we write  $r_{conv}$ for the infimum over $M$ of the convexity radii. We refer to \cite[Ch 5.4]{petersen2006riemannian} for properties of the convexity radius.  We will  assume that $r_{conv}> 0$, which will hold for our examples of interest because Liouville manifolds have bounded geometry.   

\begin{definition}
    Let $\bz=(z_1, \cdots, z_k) \in \Sym^k M$, if $\dist(z_i, z_j) < r_{conv}$, we say $\bz$ is {\em pre-confined}. 
\end{definition}

\begin{lemma}\label{lm:center}
    Let $\bz=(z_1, \cdots, z_n) \in \Sym^n(M)$ be pre-confined. Let $g: M \to \R$ be the continuous function given by
    $ g(z) = \sum_{i=1}^n \dist(z_i, z)^2. $
    Fix $\rho$ such that $\max_{i,j}(\dist(z_i,z_j)) < \rho < r_{conv}$. 
    Let $U = \cap_i B_{\rho}(z_i)$, then $g$ has a unique minimum in $U$. 
\end{lemma}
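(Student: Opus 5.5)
The plan is to prove existence and uniqueness separately, using convexity of $U$ and strict convexity of $g$ along geodesics in $U$.

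First, the structure of $U$. Each ball $B_\rho(z_i)$ has radius $\rho < r_{conv}$, so it is geodesically convex. Hence $U = \bigcap_i B_\rho(z_i)$ is geodesically convex: two points of $U$ lie in every $B_\rho(z_i)$, the unique minimal geodesic joining them stays in each ball, and so it stays in $U$. The set $U$ is also nonempty, since it contains every $z_j$: we have $\dist(z_i,z_j) \le \max \dist < \rho$. In the same way, for $\rho' = \max_{i,j}\dist(z_i,z_j) < \rho$, the closed set $K = \bigcap_i \overline{B}_{\rho'}(z_i)$ lies in $U$.

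\textbf{Existence.} I will show $g$ attains its infimum over $U$ at an interior point of $U$. The natural compact set to use is $\overline U = \bigcap_i \overline B_\rho(z_i)$. It is compact because $\rho < r_{conv}$ and the closed balls are compact, which is fine under bounded geometry and completeness. So $g$ attains its minimum over $\overline U$. It remains to rule out a minimizer $p \in \partial U$. At such a $p$ some $\dist(z_{i_0},p) = \rho$.

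\begin{itemize}
\item Since $\overline U$ is convex, the geodesic $\gamma$ from $p$ toward $z_{i_0}$ stays in $\overline U$ and enters $U$.
\item Along $\gamma$, the function $\dist(z_{i_0},\cdot)^2$ has strictly negative derivative at $p$, because $p \ne z_{i_0}$.
\item For the other terms, the first variation formula gives $\frac{d}{dt}\dist(z_j,\gamma(t))^2|_{0} = -2\langle \exp_p^{-1} z_j, \dot\gamma\rangle$.
\end{itemize}

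The sum of these terms need not be negative in general, so a cruder route is better. Since $g$ is convex along geodesics in $U$, a local minimum in the open convex set $U$ is global in $U$. So it suffices to produce an interior point that beats every point near $\partial U$.

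Compare values directly. At $z_1$ we have $g(z_1) \le (n-1)\rho'^2$. At any point $p$ with some $\dist(z_{i_0},p)$ close to $\rho$, the triangle inequality gives $\dist(z_j,p) \ge \rho - \rho'$ for all $j$. This is not obviously enough. The main obstacle is exactly this: ensuring the minimizer is not pushed to $\partial U$.

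My first attempt will use the gradient. The vector $-\nabla g(p) = 2\sum_j \exp_p^{-1}(z_j)$ is a sum of vectors pointing into the convex set spanned by the $z_j$. At a boundary point of the convex set $\bigcap_i \overline B_\rho(z_i)$, each $\exp_p^{-1}z_j$ points weakly inward. By convexity of each ball, the vector $\exp_p^{-1}z_j$ lies in the inward tangent cone of $B_\rho(z_i)$ whenever $z_j \in B_\rho(z_i)$, and $z_{i_0}$ gives a strictly inward component. Hence $-\nabla g(p)$ points strictly into $U$, so $p$ is not a minimizer over $\overline U$. Therefore the minimizer lies in $U$.

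\textbf{Uniqueness.} Each $\dist(z_i,\cdot)^2$ is convex along length-parametrized geodesics in $B_\rho(z_i)$, by the definition of $r_{conv}$. On a sufficiently small convex ball it is in fact strictly convex: its Hessian is approximately $2\,\mathrm{id}$, and this holds within the convexity radius by the standard comparison (Petersen, Ch.~5.4). So $g$ is strictly convex along any geodesic in $U$. If $p \ne q$ were two minimizers in $U$, the geodesic joining them lies in $U$, and $g$ restricted to it is strictly convex. Its value at the midpoint would then be strictly less than $g(p)=g(q)$, a contradiction.
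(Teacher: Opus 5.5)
Your final argument is correct and is essentially the paper's proof. Geodesic convexity of $U$ and strict convexity of $g$ give uniqueness, and compactness of $\overline U$ plus a first-variation argument rules out a minimizer on $\partial U$; you descend along $-\nabla g(p)=2\sum_j \exp_p^{-1} z_j$, which is strictly inward for every active ball $B_\rho(z_i)$ because its own term $\exp_p^{-1}z_i$ is, while the paper moves along the inward normal to $\partial B_\rho(z_{i_0})$, and both rest on $\langle \exp_p^{-1}z_j, \exp_p^{-1}z_i\rangle \ge 0$ for active $i$.

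The route you abandoned is exactly the paper's and does work: that same inequality with $i=i_0$ makes every term of your first-variation sum non-positive and the $j=i_0$ term strictly negative, and convexity of $\overline U$ keeps the geodesic toward $z_{i_0}$ inside $\overline U$.
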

\begin{proof}
Each ball $B_\rho(z_i)$ is strongly geodesically convex, hence their
intersection $U$ is geodesically convex as well: if $x,y\in U$, then for each
$i$ the unique minimizing geodesic from $x$ to $y$ lies in $B_\rho(z_i)$, hence
it lies in $U$. The functions $d(z,z_i)^2$ is smooth and strictly (geodesic) convex, hence their sum $g(z)$ is strictly convex. Thus $g(z)$ can have at most one minimum. 

Since $\overline U$ is compact and $g$ is continuous, $g$ attains its minimum
at some point \(p\in \overline U\). We claim that \(p\in U\). Suppose instead that \(p\in \partial U\). Then for
some \(j\) we have \(d(p,z_j)=\rho\), so \(p\in \partial B_\rho(z_j)\). Let
\(\nu\) be the inward unit normal to \(\partial B_\rho(z_j)\) at \(p\). Since
every \(z_i\) lies in the closed ball \( \overline{B_\rho(z_j)}\),  moving $p$ inward will decrease $g$ contradicting $p$ being the minimum. 
\end{proof}

\begin{definition}
    For pre-confined $\bz \in \Sym^n M$,  the center of mass of $\bz$ is the minimum point produced in Lemma \ref{lm:center}; we denote it $c(\bz)$.  We write $r(\bz) := \max_{i} \dist(z_i, c(\mathbf{z}))$. 
\end{definition}

When $V$ is a normed vector space, $r_{conv} = \infty$, every $\mathbf{z} \in \Sym^n M$ is pre-confined, and the center of mass is the weighted average of the $\{z_i\}$.

\begin{definition} \label{def: clustering rule}
    A {\em clustering rule} is two sequences of positive real numbers, 
    $$ 0 < r_1 < r_2  <\cdots < r_N \ll r_{conv}, \quad 0 = d_1 < d_2 < \cdots < d_N \ll r_{conv}$$
    such that 
    $$ r_k > d_k + r_{k-1}, \quad d_k > 6 r_{k-1}. $$ 
\end{definition}
\begin{lemma}
        For all $a,b \geq 1$, we have
$$ r_{a+b} > d_{a+b} + \max(r_a, r_b), \quad d_{a+b} > 3 (r_a + r_b)$$
\end{lemma}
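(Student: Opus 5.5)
Both inequalities follow from the defining inequalities of a clustering rule together with monotonicity of the two sequences. The index $a+b$ is implicitly assumed to be at most $N$, so that $r_{a+b}$ and $d_{a+b}$ are defined. Set $k = a+b \ge 2$; then $d_k$ and $r_{k-1}$ enter through the recursions $r_k > d_k + r_{k-1}$ and $d_k > 6 r_{k-1}$.

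For the first inequality, use that the $r_i$ are strictly increasing. Since $a, b \ge 1$, both $a$ and $b$ are at most $k-1$. Hence $\max(r_a, r_b) \le r_{k-1}$, and
$$ r_{a+b} = r_k > d_k + r_{k-1} \ge d_{a+b} + \max(r_a,r_b). $$

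For the second inequality, again use monotonicity: $r_a \le r_{k-1}$ and $r_b \le r_{k-1}$, so $r_a + r_b \le 2 r_{k-1}$. Then
$$ d_{a+b} = d_k > 6 r_{k-1} = 3\cdot 2 r_{k-1} \ge 3(r_a + r_b). $$

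There is no real obstacle here. The only points to check are that $k \ge 2$, so the recursion $d_k > 6 r_{k-1}$ is actually available (the case $d_1 = 0$ never arises), and that $a, b \le k-1$ holds precisely because both are at least $1$. The constant $6$ in the definition was evidently chosen to absorb the factor $2$ coming from bounding $r_a + r_b$ by $2r_{k-1}$.
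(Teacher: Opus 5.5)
Your proof is correct: bounding $r_a, r_b$ by $r_{a+b-1}$ via monotonicity and then applying the two defining inequalities at index $a+b \ge 2$ gives both claims. The paper states this lemma without proof, and your argument is the evident one it leaves to the reader.
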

\begin{lemma}
Clustering rules exist. 
\end{lemma}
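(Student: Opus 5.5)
The plan is to build the two sequences inductively, choosing everything proportional to a single small scale $\epsilon>0$ and shrinking $\epsilon$ at the end so that $r_N, d_N \ll r_{conv}$. The conditions in Definition \ref{def: clustering rule} are homogeneous under scaling $(r_k,d_k)\mapsto(\epsilon r_k,\epsilon d_k)$. So it suffices first to produce any finite sequences of positive reals satisfying
$$0<r_1<\cdots<r_N,\qquad 0=d_1<d_2<\cdots<d_N,\qquad r_k>d_k+r_{k-1},\qquad d_k>6r_{k-1},$$
and then rescale by a small $\epsilon$.

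For the construction, start with $r_1=1$ and $d_1=0$. Given $r_{k-1}$, set $d_k := 7 r_{k-1}$ and $r_k := d_k + 2r_{k-1} = 9r_{k-1}$. Explicitly this gives $r_k = 9^{k-1}$ and $d_k = 7\cdot 9^{k-2}$ for $k\ge 2$. Then $d_k>6r_{k-1}$ and $r_k>d_k+r_{k-1}$ hold strictly, since $r_{k-1}>0$. Monotonicity is also immediate: $r_k=9r_{k-1}>r_{k-1}$, $d_2=7>0=d_1$, and $d_{k}=9d_{k-1}>d_{k-1}$ for $k\ge3$.

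Finally, fix the (finite) $N$ of interest and choose $\epsilon>0$ with $\epsilon\, 9^{N-1}$ as small a fraction of $r_{conv}$ as required; this is possible because $r_{conv}>0$. The rescaled sequences $\epsilon r_k$, $\epsilon d_k$ still satisfy all strict inequalities, because these are linear and homogeneous, and they are now $\ll r_{conv}$.

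There is no real obstacle. The only point needing care is that "$\ll r_{conv}$" should be read as "bounded by a prescribed small multiple of $r_{conv}$", and the scaling step handles that for any such multiple.
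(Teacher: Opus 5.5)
Your argument is correct and is essentially the paper's: the paper also defines the sequences recursively so that each strict inequality holds with built-in slack, setting $r_k=(1+\epsilon)(d_k+r_{k-1})$ and $d_{k+1}=6(1+\epsilon)r_k$ where you use the fixed ratios $d_k=7r_{k-1}$, $r_k=9r_{k-1}$. In both, a free overall scale (the initial $d_2$ there, your rescaling factor $\epsilon$ here) is taken small to get $r_N,d_N\ll r_{conv}$; your version just makes this step explicit via the homogeneity of the inequalities.
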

\begin{proof}
We may choose $d_2 > 0$, $\epsilon > 0$, and set $r_1 = d_2/(6(1+\epsilon))$.  Then,  for $k = 2, 3, \cdots$, we set  
$$ r_k = (1+\epsilon) (d_k +  r_{k-1}), \quad d_{k+1} = (1+\epsilon) 6 r_{k}. $$ 
\end{proof}

\begin{definition} \label{def:clusters}
Fix a clustering rule.  We say $\mathbf{z}$ is {\em confined} if it is pre-confined and  $r(\mathbf{z}) < r_{|\mathbf{z}|}$.  We say $\mathbf{z}_1, \mathbf{z}_2$ are {\em separated} if they are pre-confined and  $\dist(c(\mathbf{z}_1), c(\mathbf{z}_2)) > d_{|\mathbf{z}_1| + |\mathbf{z}_2|}$. 
We say $\mathbf{z} = \sum \mathbf{z}_i$ is a cluster decomposition if each $\mathbf{z}_i$ is confined and each pair $\mathbf{z}_i, \mathbf{z}_j$ are separated.  We say the {\em type} of the decomposition is the partition 
$|\mathbf{z}| = \sum |\mathbf{z}_i|$.  
\end{definition}

\begin{lemma} \label{merging lemma}
    Suppose $\mathbf{z}_1, \mathbf{z}_2$ are confined, but not separated.  Then $\mathbf{z}_1 + \mathbf{z}_2$ is confined.  
\end{lemma}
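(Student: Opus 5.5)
The plan is to bound the constituents of $\mathbf{z}_1 + \mathbf{z}_2$ relative to the two centers $c_1 := c(\mathbf{z}_1)$ and $c_2 := c(\mathbf{z}_2)$. Write $a = |\mathbf{z}_1|$ and $b = |\mathbf{z}_2|$. Confinement gives that every point of $\mathbf{z}_1$ lies within $r_a$ of $c_1$, and every point of $\mathbf{z}_2$ lies within $r_b$ of $c_2$. Failure of separation gives $\dist(c_1,c_2) \le d_{a+b}$.

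\textbf{Step 1: pre-confinement.} By the triangle inequality, any two points of $\mathbf{z}_1+\mathbf{z}_2$ are at distance at most
$$\max\bigl(2r_a,\ 2r_b,\ r_a + d_{a+b} + r_b\bigr).$$
Each of these quantities is $\ll r_{conv}$ by the clustering rule. Hence $\mathbf{z}_1+\mathbf{z}_2$ is pre-confined, and Lemma \ref{lm:center} applies to it. Let $c$ denote its center of mass.

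\textbf{Step 2: locating $c$.} I would aim to show that $\dist(c, c_1) \le d_{a+b}$ and $\dist(c, c_2) \le d_{a+b}$. Granting this, every point $z$ of $\mathbf{z}_1$ satisfies
$$\dist(z,c) \le r_a + d_{a+b},$$
and similarly every point of $\mathbf{z}_2$ is within $r_b + d_{a+b}$ of $c$. Hence
$$r(\mathbf{z}_1+\mathbf{z}_2) \le d_{a+b} + \max(r_a,r_b) < r_{a+b}$$
by the preceding lemma on clustering rules, which is exactly confinement.

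In the Euclidean (normed vector space) case, Step 2 is immediate. There $c = (a c_1 + b c_2)/(a+b)$ lies on the segment $[c_1,c_2]$, so
$$\dist(c,c_1) = \tfrac{b}{a+b}\dist(c_1,c_2) \le d_{a+b},$$
and the same holds for $\dist(c,c_2)$.

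\textbf{Main obstacle: Step 2 on a general Riemannian $M$.} There the center of mass is only approximately the weighted average. I would try the following. Decompose $g = g_1 + g_2$, where $g_i(z) = \sum_{w \in \mathbf{z}_i} \dist(w,z)^2$. By Lemma \ref{lm:center}, $g_i$ is strictly convex with its minimum at $c_i$. I would then run the inward-normal argument from the proof of Lemma \ref{lm:center} on the boundary of the convex region $B_{\rho}(c_1)\cap B_{\rho}(c_2)$, with $\rho$ slightly larger than $d_{a+b}$. At a boundary point, say on $\partial B_\rho(c_1)$, the gradients of both $g_1$ and $g_2$ should have a positive outward radial component. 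This uses that, at scales $\ll r_{conv}$, $\nabla \dist(w,\cdot)^2$ agrees with its Euclidean counterpart up to a relative error controlled by curvature times scale squared.

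This would place $c$ within $\rho$ of both centers. The strict inequality $r_{a+b} > d_{a+b} + \max(r_a,r_b)$ leaves room to take $\rho$ marginally above $d_{a+b}$. The precise meaning of ``$\ll r_{conv}$'' in Definition \ref{def: clustering rule} should be understood as small enough that this metric distortion fits inside that slack. Alternatively, one can shrink the scales until it does, which is harmless since clustering rules can be rescaled.
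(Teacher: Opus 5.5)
Your proof is correct and follows the paper's argument, which is exactly the chain $r(\mathbf{z}_1+\mathbf{z}_2) \le \max_i \bigl(r(\mathbf{z}_i) + \dist(c(\mathbf{z}_i), c(\mathbf{z}_1+\mathbf{z}_2))\bigr) < \max(r_{|\mathbf{z}_1|},r_{|\mathbf{z}_2|}) + d_{|\mathbf{z}_1|+|\mathbf{z}_2|} < r_{|\mathbf{z}_1+\mathbf{z}_2|}$. The paper silently assumes two things: that $\mathbf{z}_1+\mathbf{z}_2$ is pre-confined, and that $\dist(c(\mathbf{z}_i), c(\mathbf{z}_1+\mathbf{z}_2)) \le d_{|\mathbf{z}_1|+|\mathbf{z}_2|}$; your Steps 1 and 2 supply both (exactly in the flat case, and in general under a smallness reading of $\ll r_{conv}$, where it is enough to bound the outward derivative of $g_1+g_2$ jointly, which in the flat model is at least $2|\mathbf{z}_1|\rho$, so the curvature error need not fit inside the slack).
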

\begin{proof} We compute: 
    \begin{align*}
    r(\mathbf{z}_1 + \mathbf{z}_2) &= \max_{z \in \mathbf{z}_1 + \mathbf{z}_2} \dist(z, c(\mathbf{z}_1 + \mathbf{z}_2)) = \max_{i=1,2} \max_{z \in \mathbf{z}_i} \dist(z, c(\mathbf{z}_1 + \mathbf{z}_2)) \\
    & < \max_{i=1,2} \max_{z \in \mathbf{z}_i} \dist(z, c(\mathbf{z}_i)) + \dist(c(\mathbf{z}_i),c(\mathbf{z}_1 + \mathbf{z}_2)) \\
    & < \max(r_{|\mathbf{z}_1|}, r_{|\mathbf{z}_2|}) + d_{|\mathbf{z}_1| + |\mathbf{z}_2|} \\
    &< r_{|\mathbf{z}_1 + \mathbf{z}_2|}\\
\end{align*}  
\end{proof}

\begin{corollary}
Every $\mathbf{z} \in \Sym^n(M)$ admits a cluster decomposition.
\end{corollary}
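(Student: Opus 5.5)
The plan is an iterated merging argument driven by Lemma \ref{merging lemma}. Start from the finest decomposition $\mathbf{z} = \sum_{i=1}^n z_i$ into singletons. Each singleton is confined: it is trivially pre-confined, its center of mass is the point itself, and so $r(z_i) = 0 < r_1$. This decomposition need not be a cluster decomposition, since some pairs may fail to be separated.

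The key step is the following merging procedure. Given a decomposition $\mathbf{z} = \sum_i \mathbf{z}_i$ into confined pieces, check whether every pair is separated. If so, we have a cluster decomposition and we stop. If not, pick a pair $\mathbf{z}_i, \mathbf{z}_j$ that is not separated. Lemma \ref{merging lemma} then says $\mathbf{z}_i + \mathbf{z}_j$ is confined, so replacing the two pieces by their sum gives a new decomposition of $\mathbf{z}$ into confined pieces with one fewer part.

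The number of parts strictly decreases at each step, so the procedure terminates after at most $n-1$ merges. At the latest it stops at the one-part decomposition $\mathbf{z} = \mathbf{z}$, whose separation condition is vacuous. Whenever it stops, the output is a cluster decomposition.

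The only point needing care is that the objects fed into Lemma \ref{merging lemma} satisfy its hypotheses, including pre-confinement of the merged collection so that $c(\mathbf{z}_i + \mathbf{z}_j)$ is defined. I would handle this as follows.
\begin{itemize}
\item All pieces are confined by induction, so all constituents of a piece lie within $r_{|\mathbf{z}_i|}$ of its center.
\item "Not separated" means the two centers are within $d_{|\mathbf{z}_i|+|\mathbf{z}_j|}$ of each other.
\item By the triangle inequality, any two points of $\mathbf{z}_i + \mathbf{z}_j$ are therefore within $2\max(r_a,r_b) + d_{a+b} \le 2 r_N + d_N$ of each other.
\item Since the clustering rule has $r_N, d_N \ll r_{conv}$, this bound is below $r_{conv}$, so the merged collection is pre-confined.
\end{itemize}
This is implicitly already used inside Lemma \ref{merging lemma}, so I expect no real obstacle; the corollary is a straightforward induction on the number of parts.
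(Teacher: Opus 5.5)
Your argument is correct and is essentially the paper's proof: start from a decomposition into confined pieces, repeatedly merge a non-separated pair using Lemma \ref{merging lemma}, and stop by finiteness. The only differences are cosmetic. The paper starts from the multiplicity groups $n_i z_i$ rather than from singletons; this is equivalent, since coincident singletons are never separated because $d_2>0$. Your explicit pre-confinement check for the merged piece spells out something the paper leaves implicit inside Lemma \ref{merging lemma}.
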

\begin{proof}
    If $\mathbf{z} = \sum n_i z_i$, begin with the decomposition $\mathbf{z} = \sum \mathbf{z}_i$ where $\mathbf{z}_i = n_i z_i$.      
    Iteratively merge pairs $\mathbf{z}_i, \mathbf{z}_j$ which are confined but not separated; at each stage, by Lemma \ref{merging lemma}, we maintain a decomposition into pieces which are confined.  By finiteness of $|\mathbf{z}|$ the procedure terminates (possibly at $\mathbf{z} = \mathbf{z}$ if $\mathbf{z}$ itself is confined). 
\end{proof}
Let $\bz_1, \bz_2$ be two sets of points in $M$. We define their {\em separation} as
$$ \sep(\mathbf{z}_1, \mathbf{z}_2) := \min_{z \in \mathbf{z}_1, w \in  \mathbf{z}_2 } \dist(z, w).$$

\begin{lemma} \label{lem: cluster distance bound}
    For any cluster decomposition $\mathbf{z} = \sum \mathbf{z}_i$, we have
    $$ \sep(\bz_i,\bz_j) > 2 (r_{|\mathbf{z}_i|} +  r_{|\mathbf{z}_j|}) $$
\end{lemma}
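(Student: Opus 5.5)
Fix $z \in \mathbf{z}_i$ and $w \in \mathbf{z}_j$ with $i \neq j$, and set $a = |\mathbf{z}_i|$, $b = |\mathbf{z}_j|$. The argument is a triangle inequality through the two centers of mass, combined with the clustering-rule inequalities.

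First, since $\mathbf{z} = \sum \mathbf{z}_i$ is a cluster decomposition, each piece is confined. By the definition of $r(\cdot)$ this gives
$$\dist(z, c(\mathbf{z}_i)) \le r(\mathbf{z}_i) < r_a, \qquad \dist(w, c(\mathbf{z}_j)) \le r(\mathbf{z}_j) < r_b.$$

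Second, the pieces $\mathbf{z}_i$ and $\mathbf{z}_j$ are separated, so $\dist(c(\mathbf{z}_i), c(\mathbf{z}_j)) > d_{a+b}$. The triangle inequality along $c(\mathbf{z}_i) \to z \to w \to c(\mathbf{z}_j)$ then yields
$$\dist(z,w) \ge \dist(c(\mathbf{z}_i), c(\mathbf{z}_j)) - \dist(z,c(\mathbf{z}_i)) - \dist(w,c(\mathbf{z}_j)) > d_{a+b} - r_a - r_b.$$

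Third, we apply the lemma following Definition \ref{def: clustering rule}, which gives $d_{a+b} > 3(r_a + r_b)$. Hence $\dist(z,w) > 2(r_a + r_b)$. The sets are finite, so the minimum defining $\sep(\mathbf{z}_i, \mathbf{z}_j)$ is attained at some pair $(z,w)$, and the strict inequality passes to $\sep$.

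The only point needing care is that the unproved lemma's inequality $d_{a+b} > 3(r_a+r_b)$ is really available. If it must be checked, we argue as follows. Assume without loss of generality $a \le b$, so that $a, b \le a+b-1$. Monotonicity of the sequence $r_k$ gives $r_a + r_b \le 2 r_{a+b-1}$. The clustering rule gives $d_{a+b} > 6 r_{a+b-1} \ge 3(r_a + r_b)$. The argument is purely metric in $M$ and needs no geodesic convexity beyond the fact that centers of mass are defined.
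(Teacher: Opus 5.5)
Your proof is correct and follows the paper's argument: the triangle inequality through the two centers of mass, then the confinement and separation bounds, then $d_{a+b} > 3(r_a+r_b)$. Your derivation of that last inequality, via monotonicity of the $r_k$ and the rule $d_{a+b} > 6 r_{a+b-1} \ge 3(r_a+r_b)$, also correctly fills in the auxiliary lemma that the paper states without proof.
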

\begin{proof}
For any $z \in \mathbf{z}_i, w \in \mathbf{z}_j$, we have
\begin{align*}
    \dist(z,w) & > \dist(c(\mathbf{z}_i),  c(\mathbf{z}_j)) - \dist(z, c(\mathbf{z}_i)) - \dist( w, c(\mathbf{z}_j)) \\
    & >  d_{|\mathbf{z}_i|+|\mathbf{z}_j|} - r_{|\mathbf{z}_i|} - r_{|\mathbf{z}_j|} \\ &
    > 2 r_{|\mathbf{z}_i|} + 2 r_{|\mathbf{z}_j|}.
\end{align*}
\end{proof}
In particular, the disks around the centers of clusters which witness the confinement are disjoint.  Even more particularly: if $\mathbf{z}= \sum n_i z_i$ is the sum with multiplicities, then if $z_i$ appears in some cluster in a cluster decomposition, said cluster in fact contains $n_i z_i$.   Because of this, if $\mathbf{x}, \mathbf{y}$ are clusters in possibly different cluster decompositions of some given $\mathbf{z}$, it makes unambiguous sense to write $\mathbf{x} \cap \mathbf{y}$, $\mathbf{x} \setminus \mathbf{y}, \mathbf{y} \setminus \mathbf{x}$. 

\begin{lemma} \label{comparing cluster decompositions}
    Suppose $\mathbf{x}$ and $\mathbf{y}$ are clusters in possibly different cluster decompositions of $\mathbf{z}$.  Then either $\mathbf{x}, \mathbf{y}$ are disjoint, or one contains the other. 
\end{lemma}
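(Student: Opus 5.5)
Suppose $\mathbf{x}$ and $\mathbf{y}$ intersect but neither contains the other. We derive a contradiction by comparing the radius and separation constraints; the remark after Lemma \ref{lem: cluster distance bound} guarantees that $\mathbf{x} \cap \mathbf{y}$, $\mathbf{x}\setminus\mathbf{y}$, $\mathbf{y}\setminus\mathbf{x}$ make sense. Assume without loss of generality that $|\mathbf{x}| \ge |\mathbf{y}|$. Write $a = |\mathbf{x}|$ and $b = |\mathbf{y}|$. Pick $p \in \mathbf{x}\cap\mathbf{y}$ and $q \in \mathbf{y}\setminus\mathbf{x}$.

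Let $\mathbf{x}'$ be the cluster in the decomposition containing $\mathbf{x}$ that contains $q$. Then $\mathbf{x}' \neq \mathbf{x}$, and since $\mathbf{x}' \subset \mathbf{z}$ we have $|\mathbf{x}'| \ge 1$. By Lemma \ref{lem: cluster distance bound},
$$\dist(p,q) \ge \sep(\mathbf{x},\mathbf{x}') > 2(r_a + r_{|\mathbf{x}'|}) \ge 2 r_a.$$
On the other hand, $p$ and $q$ both lie in the confined cluster $\mathbf{y}$, so
$$\dist(p,q) \le \dist(p,c(\mathbf{y})) + \dist(q, c(\mathbf{y})) < 2 r_b.$$
Monotonicity of the clustering rule gives $r_b \le r_a$, so $2r_a < \dist(p,q) < 2 r_b \le 2r_a$, a contradiction. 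Hence if $\mathbf{x}\cap\mathbf{y}\ne\emptyset$ and $|\mathbf{x}|\ge|\mathbf{y}|$, then $\mathbf{y}\subset\mathbf{x}$, which proves the claim.

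The only delicate step is getting the inequality direction right. We must use the separation of the larger cluster $\mathbf{x}$ from its neighbours, which gives a bound in terms of $r_a$, against the confinement of the smaller cluster $\mathbf{y}$, which gives a bound in terms of $r_b \le r_a$. Using the strict inequality from Lemma \ref{lem: cluster distance bound} means no extra slack is needed. We should also check that the argument respects multiplicities: by the remark, a point of $\mathbf{z}$ with multiplicity $n_i$ belongs entirely to a single cluster in each decomposition. So $p$ and $q$ are genuinely distinct underlying points, and $q \in \mathbf{y}\setminus\mathbf{x}$ really lies in a different cluster $\mathbf{x}'$ of the first decomposition.
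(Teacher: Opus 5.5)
Your proof is correct and is essentially the paper's argument. Both proofs play the confinement bound $\dist(p,q) < 2r_{|\mathbf{y}|}$ for two points of one cluster against the separation bound from Lemma \ref{lem: cluster distance bound} for the other decomposition; the paper derives the two contradictory inequalities $r_{|\mathbf{x}|} > r_{|\mathbf{y}|}$ and $r_{|\mathbf{y}|} > r_{|\mathbf{x}|}$ symmetrically, while you normalize $|\mathbf{x}| \ge |\mathbf{y}|$ and need only one of them, which is an inessential difference.
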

\begin{proof}
The assertion is equivalent to asking that not all three of $\mathbf{x} \setminus \mathbf{y}, \mathbf{y} \setminus \mathbf{x}, \mathbf{x} \cap \mathbf{y}$ are all nonempty. 

Suppose $\mathbf{x} \setminus \mathbf{y}$ and $\mathbf{x} \cap \mathbf{y}$ are both nonempty.  We have: 
$$ 2 r_{|\mathbf{x}|} > 2 r(\mathbf{x}) > \sep(\mathbf{x} \setminus \mathbf{y}, \mathbf{x} \cap \mathbf{y}) \geq \sep(\mathbf{x} \setminus \mathbf{y}, \mathbf{y}) > 2r_{|\mathbf{y}|},$$
where we have used  Lemma \ref{lem: cluster distance bound} for the final inequality.

Similarly, if $\mathbf{y} \setminus \mathbf{x}$ and $\mathbf{x} \cap \mathbf{y}$ are both nonempty, then $2r_{|\mathbf{y}|} > 2 r_{|\mathbf{x}|}$. 
Thus if all three are nonempty, we arrive at a contradiction. 
\end{proof}

\begin{corollary} \label{cluster decompositions common coarsening}
    Any two cluster decompositions of a given $\mathbf{z}$ have a common coarsening.
\end{corollary}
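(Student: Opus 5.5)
The plan is to use the laminarity statement, Lemma \ref{comparing cluster decompositions}, to build a decomposition coarser than both given ones, and then, if necessary, repair separation by the merging procedure from the proof that cluster decompositions exist. Let $\mathbf{z} = \sum_i \mathbf{x}_i = \sum_j \mathbf{y}_j$ be two cluster decompositions. As remarked after Lemma \ref{lem: cluster distance bound}, each cluster consists of full multiplicity classes $n_k z_k$, so we may treat clusters as subsets of the finite set of distinct points of $\mathbf{z}$.

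First, consider the family $\mathcal{K} = \{\mathbf{x}_i\} \cup \{\mathbf{y}_j\}$ of all clusters occurring in either decomposition. By Lemma \ref{comparing cluster decompositions}, any two members of $\mathcal{K}$ are either disjoint or nested. Let $\mathcal{K}^{\max}$ be the set of members of $\mathcal{K}$ that are maximal under inclusion, with coincident subsets identified.

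Second, show that $\mathcal{K}^{\max}$ partitions $\mathbf{z}$:
\begin{itemize}
\item Two distinct maximal members are not nested, so by laminarity they are disjoint.
\item Every point of $\mathbf{z}$ lies in some $\mathbf{x}_i$, which is contained in a maximal member.
\end{itemize}
Hence $\mathbf{z} = \sum_{\mathbf{w} \in \mathcal{K}^{\max}} \mathbf{w}$. This decomposition is coarser than both given ones: each $\mathbf{x}_i$ meets some maximal $\mathbf{w}$, hence by laminarity is contained in it, and likewise each $\mathbf{y}_j$. Each $\mathbf{w}$ is itself a cluster of one of the two decompositions, so it is confined.

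Third, handle separation, which I expect to be the only real obstacle. Two maximal members coming from the same decomposition are separated by hypothesis. For $\mathbf{w} = \mathbf{x}_i$ and $\mathbf{w}' = \mathbf{y}_j$ from different decompositions, $\mathbf{w}'$ is a union of $\mathbf{x}$-clusters, each separated from $\mathbf{x}_i$. However, this does not obviously control $\dist(c(\mathbf{x}_i), c(\mathbf{y}_j))$ against $d_{|\mathbf{x}_i| + |\mathbf{y}_j|}$, since the threshold grows with the total size. One could first try to extract this bound from Lemma \ref{lem: cluster distance bound} and the inequalities $d_{a+b} > 3(r_a + r_b)$. Rather than rely on that, I would run the merging argument from the proof of the existence corollary: while two pieces are confined but not separated, replace them by their union, which is confined by Lemma \ref{merging lemma}.

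Each merge only coarsens, so coarseness relative to both original decompositions is preserved. Each merge also reduces the number of pieces, so the process terminates. It ends at a decomposition into confined, pairwise separated pieces, i.e.\ a cluster decomposition coarser than both $\sum \mathbf{x}_i$ and $\sum \mathbf{y}_j$, which is the desired common coarsening.
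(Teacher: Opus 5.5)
Your proof is correct and follows essentially the same route as the paper. The paper likewise takes the inclusion-maximal clusters among the $\mathbf{x}_i, \mathbf{y}_j$ and uses Lemma \ref{comparing cluster decompositions} to see that they give a decomposition of $\mathbf{z}$ into confined pieces coarser than both. It then repairs any failure of separation by iterated merging via Lemma \ref{merging lemma}.
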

\begin{proof}
    Suppose given cluster decompositions $\mathbf{z} = \sum \mathbf{x}_i = \sum \mathbf{y}_j$.  Let $\mathbf{w}_k$ be the maximal elements for containment amongst the $\mathbf{x}_i, \mathbf{y}_j$.  By Lemma \ref{comparing cluster decompositions}, in fact  $\mathbf{z} = \sum \mathbf{w}_k$.  Since the $\mathbf{w}_k$ were among the $\mathbf{x}_i, \mathbf{y}_j$, they are confined; in case they are not separated, we may  iteratively apply Lemma \ref{merging lemma} to eventually obtain the desired coarsening.
\end{proof}

\begin{proposition} \label{cluster decompositions common refinement}
    Any two cluster decompositions of a given $\mathbf{z}$ have a common refinement.  In particular, there is a finest cluster decomposition.
\end{proposition}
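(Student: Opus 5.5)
The plan is to take the \emph{minimal} clusters, dually to the proof of Corollary \ref{cluster decompositions common coarsening}. Let $\mathbf{z} = \sum \mathbf{x}_i = \sum \mathbf{y}_j$ be two cluster decompositions. Let $\mathbf{w}_k$ be the elements of $\{\mathbf{x}_i\} \cup \{\mathbf{y}_j\}$ that are minimal for containment.

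First I show that $\mathbf{z} = \sum \mathbf{w}_k$. Each point of $\mathbf{z}$, counted with its full multiplicity $n_i z_i$, lies in exactly one $\mathbf{x}_i$ and exactly one $\mathbf{y}_j$. These two clusters meet, so by Lemma \ref{comparing cluster decompositions} one of them contains the other. Hence every point lies in some minimal element. Two distinct minimal elements are disjoint, again by Lemma \ref{comparing cluster decompositions}: if they met, one would contain the other, contradicting minimality. Each $\mathbf{w}_k$ refines both decompositions, since every $\mathbf{x}_i$ is a union of minimal elements by the same containment argument, and likewise every $\mathbf{y}_j$. Each $\mathbf{w}_k$ is confined, being one of the original clusters.

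It remains to check that distinct $\mathbf{w}_k$ are pairwise separated. If both come from the same decomposition, this is given. The main obstacle is the mixed case $\mathbf{w} = \mathbf{x}_i$, $\mathbf{w}' = \mathbf{y}_j$. By minimality and Lemma \ref{comparing cluster decompositions}, $\mathbf{y}_j \subseteq \mathbf{x}_k$ for some $k \neq i$, and if equality holds we are done. Otherwise we know only that $\mathbf{x}_i$ and $\mathbf{x}_k$ are separated, together with the bound $\sep(\mathbf{x}_i, \mathbf{x}_k) > 2(r_{|\mathbf{x}_i|} + r_{|\mathbf{x}_k|})$ from Lemma \ref{lem: cluster distance bound}. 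We must deduce $\dist(c(\mathbf{x}_i), c(\mathbf{y}_j)) > d_{|\mathbf{x}_i| + |\mathbf{y}_j|}$.

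To attack this, I would first locate $c(\mathbf{y}_j)$. It lies in $\bigcap_{y \in \mathbf{y}_j} B_\rho(y)$, so it is within roughly $r_{|\mathbf{y}_j|}$ of the points of $\mathbf{y}_j$, and hence well inside the ball of radius about $2r_{|\mathbf{x}_k|}$ around $c(\mathbf{x}_k)$. This gives $\dist(c(\mathbf{x}_i), c(\mathbf{y}_j)) > d_{|\mathbf{x}_i| + |\mathbf{x}_k|} - 2 r_{|\mathbf{x}_k|}$. I would then try to beat $d_{|\mathbf{x}_i| + |\mathbf{y}_j|}$ using $|\mathbf{x}_k| > |\mathbf{y}_j|$, the monotonicity of $d$, and the clustering rule inequalities $d_m > 6 r_{m-1}$ and $r_m > d_m + r_{m-1}$.

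If these inequalities do not close the gap, the fallback is to exploit $\mathbf{y}_j$ itself being separated from the other $\mathbf{y}$'s. Namely, consider the $\mathbf{y}$-cluster $\mathbf{y}_\ell$ containing a point of $\mathbf{x}_i$. It must satisfy $\mathbf{y}_\ell \supseteq \mathbf{x}_i$, since $\mathbf{x}_i$ is minimal and $\mathbf{y}_\ell$ meets it. Then I would combine the separation of $\mathbf{y}_j$ from $\mathbf{y}_\ell$ with the location estimate for $c(\mathbf{x}_i)$ inside $\mathbf{y}_\ell$, which is symmetric to the one above. Playing the two estimates against each other, according to whether $|\mathbf{x}_k|$ or $|\mathbf{y}_\ell|$ is larger, should yield the required bound.

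Finally, a point $\mathbf{z}$ has only finitely many decompositions of the finite multiset $\mathbf{z}$. Iterating the common refinement over all cluster decompositions therefore produces a single one refining all of them, which is the finest cluster decomposition.
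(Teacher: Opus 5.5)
Your proposal is correct and is essentially the paper's argument: your minimal clusters are exactly the nonempty $\mathbf{x}_i \cap \mathbf{y}_j$. The only nontrivial separation check uses the same triangle inequality through $c(\mathbf{x}_k)$, and you can get $\dist(c(\mathbf{y}_j), c(\mathbf{x}_k)) < r_{|\mathbf{y}_j|} + r_{|\mathbf{x}_k|} < 2r_{|\mathbf{x}_k|}$ directly from the confinement of both clusters. The inequality you only propose to try does close, so the fallback is unnecessary: with $m = |\mathbf{x}_i| + |\mathbf{x}_k|$ one has $d_m - 2r_{|\mathbf{x}_k|} > 4r_{m-1} > d_{m-1} \ge d_{|\mathbf{x}_i| + |\mathbf{y}_j|}$.
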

\begin{proof}
Suppose given cluster decompositions $\mathbf{z} = \sum \mathbf{x}_i = \sum \mathbf{y}_j$.  The claim is that $\mathbf{z} = \sum \mathbf{x}_i \cap \mathbf{y}_j$ 
is a cluster decomposition.
The result is obvious if one of the $\mathbf{x}_i$ and $\mathbf{y}_j$ decomposition is a refinement of the other; so suppose otherwise.  

By Lemma \ref{comparing cluster decompositions}, each $\mathbf{x}_i \cap \mathbf{y}_j$ is either $\mathbf{x}_i$ or $\mathbf{y}_j$ or empty, hence in particular satisfies confinement.  
We check the separation constraint for the $\mathbf{x}_i \cap \mathbf{y}_j$.  Of course it holds already for pairs of clusters from $\mathbf{x}$ or $\mathbf{y}$.  The remaining possibility is that there is say $\mathbf{x}_1 \subsetneq \mathbf{y}_{1'}$ and $\mathbf{x}_2 \supsetneq \mathbf{y}_{2'}$. 
We compute:
$$ \dist(c(\mathbf{x}_1) , c(\mathbf{y}_{2'})) > \dist(c (\mathbf{x}_1), c(\mathbf{x}_2) ) - \dist(c(\mathbf{x}_2), c(\mathbf{y}_{2'})) >  d_{|\mathbf{x}_1 + \mathbf{x}_2|} - r_{|\mathbf{x}_2|} > d_{|\mathbf{x}_1 + \mathbf{x}_2|} - r_{|\mathbf{x}_1|+|\mathbf{x}_2|-1}.$$
Then using $d_{|\mathbf{x}_1 + \mathbf{x}_2|} > 3 r_{|\mathbf{x}_1|+|\mathbf{x}_2|-1}$ and $r_{|\mathbf{x}_1|+|\mathbf{x}_2|-1} > d_{|\mathbf{x}_1|+|\mathbf{x}_2|-1}$, we have
$$ \dist(c(\mathbf{x}_1),c(\mathbf{y}_{2'}))| > 2 d_{|\mathbf{x}_1|+|\mathbf{x}_2|-1} \ge 2 d_{|\mathbf{x}_1| + |\mathbf{y}_{2'}|} > d_{|\mathbf{x}_1| + |\mathbf{y}_{2'}|}. $$
\end{proof}

\begin{lemma}
    Suppose $\mathbf{z}_1$ and $\mathbf{z}_2$ are each confined, and $(\mathbf{z}_1, \mathbf{z}_2)$ is separated.  Then for any confined $\mathbf{z}_1' \subset \mathbf{z}_1$ and $\mathbf{z}_2' \subset \mathbf{z}_2$, the pair $(\mathbf{z}_1', \mathbf{z}_2')$ is separated.  
\end{lemma}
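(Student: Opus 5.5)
We need $\dist(c(\mathbf{z}_1'), c(\mathbf{z}_2')) > d_{|\mathbf{z}_1'|+|\mathbf{z}_2'|}$. We argue by the triangle inequality, using that the centers of the sub-configurations stay close to the centers of the original clusters, together with the clustering rule inequalities.

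\emph{Step 1: the center of a subset lies near the original center.} Since $\mathbf{z}_i$ is confined, every point of $\mathbf{z}_i$ lies in the ball $B_{r(\mathbf{z}_i)}(c(\mathbf{z}_i))$, of radius less than $r_{|\mathbf{z}_i|} \ll r_{conv}$. This ball is geodesically convex. The center of mass $c(\mathbf{z}_i')$ of any subset minimizes a strictly convex function whose summands are squared distances to points of that ball. By the boundary argument in the proof of Lemma \ref{lm:center}, moving inward toward the ball decreases $g$, so the minimizer lies in the closed ball. Hence
$$\dist(c(\mathbf{z}_i'), c(\mathbf{z}_i)) \le r(\mathbf{z}_i) < r_{|\mathbf{z}_i|}.$$
Pre-confinement of $\mathbf{z}_i'$ is inherited from $\mathbf{z}_i$.

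\emph{Step 2: the triangle inequality.} Combining Step 1 for both clusters,
$$\dist(c(\mathbf{z}_1'),c(\mathbf{z}_2')) > d_{|\mathbf{z}_1|+|\mathbf{z}_2|} - r_{|\mathbf{z}_1|} - r_{|\mathbf{z}_2|}.$$
By the lemma following Definition \ref{def: clustering rule}, $d_{a+b} > 3(r_a+r_b)$. The right-hand side is therefore bigger than $\tfrac{2}{3} d_{|\mathbf{z}_1|+|\mathbf{z}_2|}$, and also bigger than $2(r_{|\mathbf{z}_1|}+r_{|\mathbf{z}_2|})$.

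\emph{Step 3: comparison with the target.} It remains to show this lower bound exceeds $d_{|\mathbf{z}_1'|+|\mathbf{z}_2'|}$. There are two cases.

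\textbf{Case A: $\mathbf{z}_1'=\mathbf{z}_1$ and $\mathbf{z}_2'=\mathbf{z}_2$.} Then there is nothing to prove.

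\textbf{Case B: at least one inclusion is proper.} Then $m := |\mathbf{z}_1'|+|\mathbf{z}_2'| \le |\mathbf{z}_1|+|\mathbf{z}_2|-1$, so $d_m \le d_{|\mathbf{z}_1|+|\mathbf{z}_2|-1}$ by monotonicity of the $d_k$. The clustering rule gives $r_{k-1} > d_{k-1}$ (since $r_{k-1} > d_{k-1}+r_{k-2}$), and $d_k > 6r_{k-1}$. With $k=|\mathbf{z}_1|+|\mathbf{z}_2|$ this yields
$$\tfrac{2}{3} d_k > 4 r_{k-1} > 4 d_{k-1} \ge d_m,$$
which finishes the proof.

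The only real obstacle is Step 1: the center of mass of a subset must lie within $r(\mathbf{z}_i)$ of the original center. In the Euclidean case this is immediate from convexity of the ball. In the Riemannian case I would rely on the convexity of small balls (radius $\ll r_{conv}$) and rerun the inward-normal argument of Lemma \ref{lm:center} with the ball $B_{r(\mathbf{z}_i)}(c(\mathbf{z}_i))$ in place of $\bigcap_j B_\rho(z_j)$. The inward gradient claim there should follow from the fact that each $\dist(\cdot, z)^2$, for $z$ in a convex ball, strictly decreases along the inward normal at boundary points of that ball.
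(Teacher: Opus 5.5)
Your proof is correct and takes the same route as the paper's: the triangle inequality with the center-displacement bound $\dist(c(\mathbf{z}_i), c(\mathbf{z}_i')) < r_{|\mathbf{z}_i|}$, followed by the clustering-rule estimate $d_k > 6r_{k-1} > 6d_{k-1}$. You are somewhat more careful than the paper, which uses the displacement bound without justification and does not separate out the trivial case $\mathbf{z}_1'=\mathbf{z}_1$, $\mathbf{z}_2'=\mathbf{z}_2$, where its claimed contradiction does not literally arise.
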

\begin{proof}
    Suppose not. Then: 
    $$d_{|\mathbf{z}_1'| + |\mathbf{z}_2'|} > \dist(c(\mathbf{z}_1') ,c(\mathbf{z}_2')) \ge
    \dist(c(\mathbf{z}_1) , c(\mathbf{z}_2)) - 
    \dist(c(\mathbf{z}_1) , c(\mathbf{z}_1')) - \dist(c(\mathbf{z}_2), c(\mathbf{z}_2')) > d_{|\mathbf{z}_1| + |\mathbf{z}_2|} - r_{|\mathbf{z_1}|} - r_{|\mathbf{z}_2|}$$
    This is a contradiction (recall $d_k > 6 r_{k-1} > 6d_{k-1}$). 
\end{proof}

\begin{corollary}
    If $\mathbf{z} = \sum \mathbf{z}_i$ is a cluster decomposition, and $\mathbf{z}_i = \sum \mathbf{z}_{i, \alpha}$ are cluster decompositions, then $\mathbf{z} = \sum \mathbf{z}_{i,\alpha}$ is a cluster decomposition. 
\end{corollary}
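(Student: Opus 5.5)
We have to check the two conditions in Definition \ref{def:clusters} for the decomposition $\mathbf{z} = \sum_{i,\alpha} \mathbf{z}_{i,\alpha}$. That these pieces sum to $\mathbf{z}$ is clear, since each $\mathbf{z}_i$ is the sum of its own $\mathbf{z}_{i,\alpha}$.

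\emph{Confinement.} Each $\mathbf{z}_{i,\alpha}$ is a cluster in a cluster decomposition of $\mathbf{z}_i$, so it is confined by hypothesis. Confinement is an intrinsic condition on the multiset: it asks for pre-confinement and $r(\mathbf{z}_{i,\alpha}) < r_{|\mathbf{z}_{i,\alpha}|}$. So nothing further is needed.

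\emph{Separation.} We must show that every pair $\mathbf{z}_{i,\alpha}, \mathbf{z}_{j,\beta}$ with $(i,\alpha) \neq (j,\beta)$ is separated. There are two cases.
\begin{itemize}
\item If $i = j$ and $\alpha \neq \beta$, the two pieces are distinct clusters of the given cluster decomposition of $\mathbf{z}_i$, so they are separated by hypothesis.
\item If $i \neq j$, then $\mathbf{z}_i$ and $\mathbf{z}_j$ are each confined, and the pair $(\mathbf{z}_i, \mathbf{z}_j)$ is separated because $\mathbf{z} = \sum \mathbf{z}_i$ is a cluster decomposition. Moreover $\mathbf{z}_{i,\alpha} \subset \mathbf{z}_i$ and $\mathbf{z}_{j,\beta} \subset \mathbf{z}_j$ are confined. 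The preceding lemma then gives directly that $(\mathbf{z}_{i,\alpha}, \mathbf{z}_{j,\beta})$ is separated.
\end{itemize}

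The only real content is the inequality in the preceding lemma, and that is already established. The one point to watch is that the lemma requires $\mathbf{z}_1'$ and $\mathbf{z}_2'$ to be confined sub-multisets. This holds here because the pieces of a cluster decomposition are confined by definition.

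The pre-confinement needed for separation to make sense is also automatic. Pre-confinement of a pair of multisets is implicit in the definition of separation, and it is inherited by subsets: all mutual distances of a subset are among those of the original multiset.

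So the whole proof is a two-case check, and I do not expect a real obstacle.
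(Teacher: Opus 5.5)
Your proposal is correct and takes the same approach as the paper. The paper states this corollary without a separate proof, immediately after the lemma on confined sub-multisets of a separated pair; your case split (separation within a fixed $\mathbf{z}_i$ by hypothesis, and separation across $i \neq j$ by that lemma applied to $\mathbf{z}_{i,\alpha} \subset \mathbf{z}_i$, $\mathbf{z}_{j,\beta} \subset \mathbf{z}_j$, with confinement inherited directly) is exactly the argument it leaves to the reader.
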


More generally: 

\begin{corollary}\label{contraction induces merging}
    Let $f: M \to N$ be a contraction, i.e. $\dist(f(z_1), f(z_2)) \leq \dist(z_1, z_2)$ for any $z_1, z_2 \in M$. Let $f: \Sym^n M \to \Sym^n N$ be the natural extension.  Suppose given $\mathbf{z} \in \Sym^n M$ and a (not necessarily cluster) decomposition $\mathbf{z} = \sum_i \mathbf{z}_i$.  If  $f(\mathbf{z}) = \sum_i f(\mathbf{z}_i)$ is a cluster decomposition, and $\mathbf{z}_i = \sum_\alpha \mathbf{z}_{i, \alpha}$ are cluster decompositions, then $\mathbf{z} = \sum_{i, \alpha} \mathbf{z}_{i,\alpha}$
    is a cluster decomposition.  
\end{corollary}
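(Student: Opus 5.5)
Given the contraction $f$, the decomposition $\mathbf{z} = \sum_i \mathbf{z}_i$ with $f(\mathbf{z}) = \sum_i f(\mathbf{z}_i)$ a cluster decomposition, and cluster decompositions $\mathbf{z}_i = \sum_\alpha \mathbf{z}_{i,\alpha}$, two things must be checked for $\mathbf{z} = \sum_{i,\alpha}\mathbf{z}_{i,\alpha}$. Each $\mathbf{z}_{i,\alpha}$ is confined by hypothesis, so confinement is free. What remains is separation: for every pair of distinct pieces, $\dist(c(\mathbf{z}_{i,\alpha}), c(\mathbf{z}_{j,\beta})) > d_{|\mathbf{z}_{i,\alpha}|+|\mathbf{z}_{j,\beta}|}$.

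\emph{Pieces in the same $\mathbf{z}_i$.} When $i = j$, separation is part of the assumption that $\mathbf{z}_i = \sum_\alpha \mathbf{z}_{i,\alpha}$ is a cluster decomposition.

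\emph{Pieces in different $\mathbf{z}_i$.} When $i \neq j$, I use the contraction to transfer the separation in $N$ back to $M$. Since $f(\mathbf{z}) = \sum f(\mathbf{z}_i)$ is a cluster decomposition, Lemma \ref{lem: cluster distance bound} in $N$ gives
$$\sep(f(\mathbf{z}_i), f(\mathbf{z}_j)) > 2(r_{|\mathbf{z}_i|} + r_{|\mathbf{z}_j|}).$$
Because $f$ does not increase distances, $\sep(\mathbf{z}_i, \mathbf{z}_j) \ge \sep(f(\mathbf{z}_i), f(\mathbf{z}_j))$, so the same bound holds in $M$. Now take $z \in \mathbf{z}_{i,\alpha}$ and $w \in \mathbf{z}_{j,\beta}$. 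Confinement gives $\dist(z, c(\mathbf{z}_{i,\alpha})) < r_{|\mathbf{z}_{i,\alpha}|} \le r_{|\mathbf{z}_i|}$, and similarly for $w$. The triangle inequality then yields
$$\dist(c(\mathbf{z}_{i,\alpha}), c(\mathbf{z}_{j,\beta})) > 2(r_{|\mathbf{z}_i|}+r_{|\mathbf{z}_j|}) - r_{|\mathbf{z}_i|} - r_{|\mathbf{z}_j|} = r_{|\mathbf{z}_i|}+r_{|\mathbf{z}_j|}.$$

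\emph{Main obstacle.} This bound is in terms of $r$, but separation requires exceeding $d_{|\mathbf{z}_{i,\alpha}|+|\mathbf{z}_{j,\beta}|}$, which may be larger than $r_{|\mathbf{z}_i|}+r_{|\mathbf{z}_j|}$, for instance when the subclusters are large. So I instead work directly with centers in $N$, using the stronger input $\dist(c(f\mathbf{z}_i), c(f\mathbf{z}_j)) > d_{|\mathbf{z}_i|+|\mathbf{z}_j|} \ge d_{|\mathbf{z}_{i,\alpha}|+|\mathbf{z}_{j,\beta}|}$.

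\emph{Comparing centers in $N$ and $M$.} In $N$, every point of $f(\mathbf{z}_i)$ lies within $r_{|\mathbf{z}_i|}$ of $c(f\mathbf{z}_i)$, and likewise for $j$. Pick $z \in \mathbf{z}_{i,\alpha}$ and $w \in \mathbf{z}_{j,\beta}$. Then
$$\dist(f z, f w) > d_{|\mathbf{z}_i|+|\mathbf{z}_j|} - r_{|\mathbf{z}_i|} - r_{|\mathbf{z}_j|},$$
and the contraction property gives $\dist(z,w) \ge \dist(fz, fw)$. Combining this with $\dist(z, c(\mathbf{z}_{i,\alpha})) < r_{|\mathbf{z}_i|}$ and the analogous bound for $w$,
$$\dist(c(\mathbf{z}_{i,\alpha}), c(\mathbf{z}_{j,\beta})) > d_{|\mathbf{z}_i|+|\mathbf{z}_j|} - 2(r_{|\mathbf{z}_i|}+r_{|\mathbf{z}_j|}).$$

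\emph{Closing the inequality.} It remains to show this exceeds $d_{|\mathbf{z}_{i,\alpha}|+|\mathbf{z}_{j,\beta}|}$. This is the same numerical game as in the preceding lemma on subclusters, needing slightly more room.
\begin{itemize}
\item If $|\mathbf{z}_{i,\alpha}|+|\mathbf{z}_{j,\beta}| = |\mathbf{z}_i|+|\mathbf{z}_j|$, then both $\mathbf{z}_i$ and $\mathbf{z}_j$ are single clusters. Their centers in $N$ are close to the images of their centers in $M$, to within $r$-sized errors. I will then try to control the centers directly using $d_{a+b} > 3(r_a+r_b)$.
\item Otherwise, write $m = |\mathbf{z}_i|+|\mathbf{z}_j| > m' = |\mathbf{z}_{i,\alpha}|+|\mathbf{z}_{j,\beta}|$. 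Use $d_m > 6 r_{m-1}$, with $r_{|\mathbf{z}_i|}, r_{|\mathbf{z}_j|} \le r_{m-1}$, to get $d_m - 4r_{m-1} > 2r_{m-1} > d_{m-1} \ge d_{m'}$.
\end{itemize}
I expect the equality case to be the delicate point, since it needs slack that the constants may not obviously provide. There I will argue that the $d$-gap beats the combined cluster radii in the margin given by $d_{a+b} > 3(r_a + r_b)$.
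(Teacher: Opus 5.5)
\textbf{Verdict: there is a genuine gap in the equality case.}

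Your reduction to separation is correct. So is the case $i=j$, and so is the case $m' := |\mathbf{z}_{i,\alpha}|+|\mathbf{z}_{j,\beta}| < m := |\mathbf{z}_i|+|\mathbf{z}_j|$; that last case works for an arbitrary contraction.

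The gap is the remaining case $\mathbf{z}_{i,\alpha}=\mathbf{z}_i$, $\mathbf{z}_{j,\beta}=\mathbf{z}_j$, which you leave as a promise. The idea you sketch cannot work. In that case the threshold to beat in $M$ is $d_m$ itself, exactly the number you are given in $N$. Any transfer through point-to-point distances, or through the estimate $\dist(f(c(\mathbf{z}_i)), c(f(\mathbf{z}_i))) < 2r_{|\mathbf{z}_i|}$, lands at $d_m$ minus a positive multiple of the $r$'s. The inequality $d_{a+b} > 3(r_a+r_b)$ compares $d$ with $r$, but it gives no slack against $d_m$.

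In fact no argument can close this case for a general contraction. Take $M=\R^2$, $N=\R$, $y=(D,0)$ and $f(v)=|v-y|$, which is $1$-Lipschitz. Let $\mathbf{z}_1=\{(0,a),(0,-a)\}$ with $0<a<r_2$, let $\mathbf{z}_2=\{y\}$, and choose $D \le d_3 < \sqrt{D^2+a^2}$.
\begin{itemize}
\item $f(\mathbf{z}_1)$ is the doubled point $\sqrt{D^2+a^2}$ and $f(\mathbf{z}_2)=\{0\}$, so $f(\mathbf{z})=f(\mathbf{z}_1)+f(\mathbf{z}_2)$ is a cluster decomposition.
\item $\mathbf{z}_1$ and $\mathbf{z}_2$ are confined, so their trivial decompositions are cluster decompositions.
\item But $\dist(c(\mathbf{z}_1),c(\mathbf{z}_2))=D\le d_3$, so $\mathbf{z}=\mathbf{z}_1+\mathbf{z}_2$ is not a cluster decomposition.
\end{itemize}

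The missing ingredient is that $f$ commutes with centers of mass, $f(c(\mathbf{x}))=c(f(\mathbf{x}))$. This holds for affine contractions between Euclidean spaces, where $c$ is the arithmetic mean. In particular it holds for $\Re:\C\to\R$, which is the only case in which the paper invokes the corollary (Lemma \ref{re inverse can induce splitting}). With this property the equality case is one line:
\[ \dist(c(\mathbf{z}_i),c(\mathbf{z}_j)) \ge \dist(f(c(\mathbf{z}_i)),f(c(\mathbf{z}_j))) = \dist(c(f(\mathbf{z}_i)),c(f(\mathbf{z}_j))) > d_m. \]
Your argument for $m'<m$ then handles everything else. So you should add this hypothesis, or restrict to $f=\Re$, and use it in the case you flagged as delicate. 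As literally stated for arbitrary contractions, the corollary is false in exactly that case.
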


For any partition $\tau$, we write $\Clus^\tau(M) \In \Sym^n(M)$ for the (open) subset of elements admitting a type $\tau$ decomposition.  We write  $\Clus^{\leq \tau}(M) = \cup_{\sigma \leq \tau} \Clus^\tau (M) $.

\begin{proposition}
For any partition $\tau$,  $\Sym^{\tau}(M) \subset  \Clus^{\le \tau} (M)$. 
\end{proposition}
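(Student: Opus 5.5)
Let $\mathbf{z} \in \Sym^\tau(M)$, so $\mathbf{z} = \sum_{i=1}^k \tau_i z_i$ with the $z_i$ pairwise distinct. The plan is to run the merging procedure from the proof of the Corollary that every point admits a cluster decomposition, starting from the decomposition $\mathbf{z} = \sum_i \mathbf{z}_i$ with $\mathbf{z}_i = \tau_i z_i$, and to check that the result has type $\le \tau$.

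The first step is to confirm that each starting piece $\tau_i z_i$ is confined. Its points all coincide, so it is pre-confined. Its center of mass is $z_i$ itself, since $g(z) = \tau_i \dist(z_i,z)^2$ is minimized uniquely there. Hence $r(\tau_i z_i) = 0 < r_{\tau_i}$.

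The second step is to iterate Lemma \ref{merging lemma}: whenever two current pieces are confined but not separated, replace them by their sum, which is again confined. Each merge lowers the number of pieces, so the process terminates. At that point every pair of pieces is separated and every piece is confined, so we have a cluster decomposition $\mathbf{z} = \sum_j \mathbf{w}_j$.

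The third step is to read off the type. Each $\mathbf{w}_j$ is a sum $\sum_{i \in S_j} \tau_i z_i$ over blocks $S_j$ of a set partition of $\{1,\dots,k\}$. So the type $(|\mathbf{w}_j|)_j = (\sum_{i\in S_j}\tau_i)_j$ is obtained by combining parts of $\tau$, which is exactly the relation $\sigma \le \tau$. Hence $\mathbf{z} \in \Clus^\sigma(M) \subset \Clus^{\le\tau}(M)$.

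There is essentially no obstacle. The only point needing care is bookkeeping with multiplicities: each merge must take whole pieces $\tau_i z_i$, never splitting one. This holds because we only ever sum existing pieces, so the resulting type is a coarsening of $\tau$ rather than an arbitrary partition of $n$.
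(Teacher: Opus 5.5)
Your proposal is correct and is essentially the paper's own argument: start with the clusters of coincident points $\tau_i z_i$, merge non-separated pairs iteratively using Lemma \ref{merging lemma}, and note that the resulting cluster decomposition has a type obtained by combining parts of $\tau$. Your check that $c(\tau_i z_i) = z_i$ and $r(\tau_i z_i) = 0 < r_{\tau_i}$ simply spells out the paper's remark that the initial candidate decomposition satisfies the confinement constraints.
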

\begin{proof}
Given any $\mathbf{z} \in \Sym V$, we  initialize a candidate cluster decomposition, where only coincidental points form a cluster. This initial candidate satisfies confinement constraints, but possibly not the separation constraint. 
We will iteratively merge clusters if they are not separated, i.e. $\dist(c(\mathbf{z}_1), c(\mathbf{z}_2)) < d_{|\mathbf{z}_1| + |\mathbf{z}_2|}$; per Lemma \ref{merging lemma}, the result consists again of a decomposition into candidate clusters which are all confined. 
Eventually we arrive at a cluster decomposition of a type obtained by merging parts of $\tau$, i.e. of a type $\le \tau$. 
\end{proof}

\begin{corollary}
    $\Sym^{\le \tau}(M) \subset \Clus^{\le \tau}(M)$. 
\end{corollary}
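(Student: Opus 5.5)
The corollary follows directly from the preceding proposition together with the closure description \eqref{partition ordering captures closure}. By definition,
$$\Sym^{\le \tau}(M) = \bigcup_{\lambda \le \tau} \Sym^\lambda(M),$$
so it suffices to show that $\Sym^\lambda(M) \subset \Clus^{\le \tau}(M)$ for each $\lambda \le \tau$.

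Fix such a $\lambda$. The preceding proposition, applied with $\lambda$ in place of $\tau$, gives
$$\Sym^\lambda(M) \subset \Clus^{\le \lambda}(M) = \bigcup_{\sigma \le \lambda} \Clus^\sigma(M).$$

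The only remaining point is transitivity of $\le$. The relation $\sigma \le \lambda$ means that $\sigma$ is obtained by combining parts of $\lambda$, and $\lambda \le \tau$ means that $\lambda$ is obtained by combining parts of $\tau$. Composing these two mergings shows that $\sigma$ is obtained by combining parts of $\tau$, so $\sigma \le \tau$. Consequently $\Clus^{\le \lambda}(M) \subset \Clus^{\le \tau}(M)$, and taking the union over all $\lambda \le \tau$ gives the claim.

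There is no real obstacle here. The only care needed is to apply the proposition stratum by stratum rather than to the closure directly, and then to note that the partial order on partitions is transitive.
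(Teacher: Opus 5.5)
Your argument is correct and is the intended one. The paper gives no separate proof and treats the corollary as immediate from the preceding proposition, applied to each stratum $\Sym^\lambda(M)$ with $\lambda \le \tau$, together with transitivity of the merging order. You also correctly read $\Clus^{\le \tau}(M)$ as $\bigcup_{\sigma \le \tau} \Clus^\sigma(M)$, which fixes the typo in the paper's definition.
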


\begin{lemma} \label{relax r and d}
If $(r_i,d_i)$ is a set of clustering rules, then there exists another set of clustering rules $(r_i', d_i')$, such that $r_i' > r_i$ and $d_i'<d_i$ for all $i$. 
\end{lemma}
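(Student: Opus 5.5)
The approach is a perturbation argument. The defining conditions of a clustering rule are finitely many strict inequalities in the finitely many numbers $r_1,\dots,r_N,d_2,\dots,d_N$. We may therefore move each $r_i$ slightly up and each $d_i$ slightly down while keeping every inequality true. The constraint $d_1 = 0$ is an equality and cannot be relaxed; we keep $d_1' = d_1 = 0$. For $i=1$ the requirement $d_1' < d_1$ is then read as vacuous, since $d_1$ plays no role in any constraint.

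\textbf{Setup.} Fix $\epsilon>0$ and set
\[ r_i' = r_i + \epsilon, \qquad d_i' = d_i - \epsilon \quad (i \ge 2), \qquad d_1' = 0. \]
We must check four things:
\begin{itemize}
\item positivity and monotonicity, namely $0<r_1'<\dots<r_N'$ and $0=d_1'<d_2'<\dots<d_N'$;
\item $r_k' > d_k' + r_{k-1}'$;
\item $d_k' > 6 r_{k-1}'$;
\item the smallness conditions $r_N', d_N' \ll r_{conv}$.
\end{itemize}

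\textbf{Strict inequalities.} The second condition becomes $r_k + \epsilon > d_k - \epsilon + r_{k-1} + \epsilon$. This follows from $r_k > d_k + r_{k-1}$, with slack $\epsilon$ to spare, and for $k=2$ it also holds since $d_2' < d_2$. The third condition becomes $d_k - \epsilon > 6 r_{k-1} + 6\epsilon$, which holds as soon as
\[ 7\epsilon < \min_{k} \bigl(d_k - 6 r_{k-1}\bigr). \]
This minimum is positive because there are finitely many $k$. Monotonicity of the $r_i'$ is preserved by a uniform shift. For the $d_i'$, we have $d_2' = d_2 - \epsilon > 0$ once $\epsilon < d_2$, and monotonicity is again preserved by the uniform shift. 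The condition $\ll r_{conv}$ is an open smallness condition, so taking $\epsilon$ small keeps it.

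\textbf{Main point.} The only real obstacle is the inequality $d_k' > 6r_{k-1}'$. Shrinking $d$ and enlarging $r$ both work against it, so this is what actually forces $\epsilon$ to be small. Since every relevant constraint is a strict inequality among finitely many reals, choosing $\epsilon$ below the minimal slack divided by $7$ suffices, and the lemma follows.
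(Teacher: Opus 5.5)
Your proof is correct, but it takes a slightly different route from the paper's.

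\textbf{The paper's route.} The paper perturbs the parameters one at a time. With all other parameters frozen, each $r_k$ (resp.\ $d_k$) is subject only to finitely many strict inequalities involving its neighbours. So each can be nudged up (resp.\ down) in turn.

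\textbf{Your route.} You make a single simultaneous shift $r_i' = r_i+\epsilon$, $d_i' = d_i-\epsilon$. This has the advantage of being completely explicit. The constraint $r_k > d_k + r_{k-1}$ actually gains slack $\epsilon$, so only $d_k > 6r_{k-1}$ needs controlling. That gives the concrete bound $7\epsilon < \min_k (d_k - 6r_{k-1})$, which also forces $\epsilon < d_2$.

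\textbf{What each buys.} Your argument uses the finiteness of $N$ essentially, through that minimum. This is legitimate, since the definition has finite $N$. The paper's local, sequential adjustment would go through verbatim for an infinite sequence, for which $\inf_k (d_k - 6r_{k-1})$ may be zero. The existence lemma in fact produces such infinite sequences.

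\textbf{On $d_1$.} You are right that $d_1 = 0$ is forced by the definition, so the literal requirement $d_1' < d_1$ cannot be met. Separation only ever involves $d_{|\mathbf{z}_1|+|\mathbf{z}_2|}$, whose index is at least $2$. So reading the $i=1$ case as vacuous is the correct fix, a point the paper's statement and proof pass over.
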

\begin{proof}
    For any fixed $k$, if we fix all parameters $(r_i, d_i)$ except $r_k$, there are finitely many open constraints for $r_k$, e.g., 
    $$  r_k > d_k + r_{k-1}, \quad r_{k+1} > d_{k+1} + r_k, \quad d_{k+1} > 6 r_k. $$
    Similarly, if we fix all other parameters except $d_k$, there are only finitely many open constraints for $d_k$. Thus, we can initialize $r_i'=r_i, d_i'=d_i$, then adjust them iteratively one by one for $i=1,2,\cdots$, such that $r_i' > r_i$ and $d_i'<d_i$ for all $i$. 
\end{proof}

\begin{definition}
Fix a clustering rule.  A function $f: \Sym^n(M) \to X$ is a {\em center of mass function} if there exist functions $f_\tau: \Sym^\tau(M) \to X$ such that for any cluster decomposition
$\mathbf{z} = \mathbf{z}_1 + \ldots + \mathbf{z}_k$ of type $\tau$, we have
$$f(\mathbf{z}) = f_\tau(|\mathbf{z}_1| c(\mathbf{z}_1) + \ldots + |\mathbf{z}_k| c( \mathbf{z}_k) )$$
\end{definition}

Note the definition is local on $\Sym^n(M)$, so makes sense for functions defined only on some open subset, and indeed such functions form a sheaf.  One obvious nonconstant such global function is the total center of mass. We will construct some useful center of mass functions in Lemma \ref{box opens have cm boundary} below.   

\begin{remark}
    Continuous functions on $\Sym^n \C$ which pull back to holomorphic functions on $\C^n$ are already holomorphic on $\Sym^n \C$ (fundamental theorem of symmetric functions), but  continuous functions on $\Sym^n \C$ which pull back to smooth functions on $\C^n$ need not be smooth.  However, a continuous function on $\Sym^n \C$ which is a center of mass function and which pulls back to a smooth function on $\C^n$ is already smooth on $\Sym^n \C$.  
\end{remark}

\section{K\"ahler potentials} \label{sec: kp}

In this section we construct K\"ahler potentials on symmetric products of surfaces which are well adapted to studying gluing configurations. 

The smoothing of K\"ahler potential on $\Sym^n \Sigma$ for smooth affine $\Sigma$ was considered by Perutz \cite[Section 7]{perutz2008hamiltonian}, based on smoothing and patching technique of Richberg \cite{richberg1967stetige} and Varouchas \cite{varouchas1984stabilite}. Here we use the same tools but exert more control of the smoothing over  strip-like regions $T^*[-2,2] \In \Sigma$, in order to prove sectoriality later.

\subsection{Recollections on smoothing plurisubharmonic functions}

Recall that a
smooth function $f$ is said to be (strictly) subharmonic if   
$\Delta f \ge 0$ (resp. $\Delta f > 0$).\footnote{Many texts allow also upper semi-continuous $f$, where Laplacian is understood in a distributional sense, ie, $\int \Delta f \cdot g \geq 0$ for all compactly supported non-negative test functions $g$.  Here we will only need continuous psh functions.}
A function on a complex manifold, $\phi: X \to \R$, is said to be (strictly) plurisubharmonic for any holomorphic map from a 1-complex-dimensional disk, $i: D \to X$, the pullback $i^* \phi$ is (strictly) subharmonic.  Note that if $\pi: X \to Y$ is any finite map of complex manifolds and $\phi$ is strictly plurisubharmonic on $X$, then $\pi_* \phi$ is strictly plurisubharmonic on $Y$ (we will later apply this to $\C^n \to \Sym^n \C$). 

In case $\phi$ is smooth, subharmonicity is equivalent to asking that the complex Hessian $\omega_\phi :=  i \partial \bar \partial \phi$ is positive semidefinite, and strict plurisubharmonicity
to positive definiteness.  A smooth strictly plurisubharmonic function is referred to as a K\"ahler potential.  A discussion of plurisubharmonic functions in the context of symplectic geometry can be found in \cite{cieliebak-eliashberg}. If we define $d^c =  (\d - \bar \d)/(2i)$, then $ d d^c = i \d \bar \d$, and we get
$$ \lambda = d^c \phi, \quad \omega = d \lambda = dd^c \phi = i \d \bar \d \phi. $$

\begin{definition}
    Let $\phi$ be a continuous 
    strictly plurisubharmonic function on $X$. 
    We say $(\varphi_t)_{t \in (0, t_0)}$ is a smoothing of $\phi$,  if for each $t \in (0, t_0)$, $\varphi_t$ is smooth strictly plurisubharmonic on $X$ and   $ \|\varphi_{n,t} - \phi_n\|_{C^0(X)} < t$.
\end{definition}

A theorem of Richberg asserts the existences of psh smoothings of continuous psh functions \cite{richberg1967stetige}: 

\begin{theorem}[\protect{\cite{richberg1967stetige},  
\cite[Thm 5.21]{demailly1997complex}}]\label{thm:richberg}
Let $u \in \mathrm{Psh}(X)$ be a continuous function which is strictly
plurisubharmonic on an open subset $\Omega \subset X$, with
$
Hu \ge \gamma
$
for some continuous positive Hermitian form $\gamma$ on $\Omega$.
For any continuous function $\lambda \in C^0(\Omega)$ with $\lambda > 0$,
there exists a plurisubharmonic function
$
\tilde u \in C^0(X) \cap C^\infty(\Omega)
$
such that
$
u \le \tilde u \le u + \lambda \quad \text{on } \Omega,
$
and $\tilde u = u$ on $X \setminus \Omega$. Moreover, $\tilde u$ is
strictly plurisubharmonic on $\Omega$ and satisfies
$
H\tilde u \ge (1-\lambda)\gamma.
$
In particular, $\tilde u$ can be chosen strictly plurisubharmonic on $X$
if $u$ has the same property.
\end{theorem}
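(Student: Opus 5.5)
The plan is the standard proof of Richberg's theorem, as in \cite[Thm 5.21]{demailly1997complex}: smooth $u$ locally by convolution, then glue the local smoothings using a regularized maximum. We do not need to smooth on all of $X$. We only need to smooth on $\Omega$, with an error that goes to zero as we approach $\partial\Omega$, and then extend by $u$ outside $\Omega$.

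\textbf{Step 1: local smoothings.} Choose a locally finite cover of $\Omega$ by coordinate balls $B_j \Subset B_j' \Subset \Omega$. On each $B_j'$, since $Hu \ge \gamma$ and $\gamma$ is continuous and positive, we can shrink $B_j'$ so that $u - \epsilon_j|z|^2$ is still psh there. Convolving with a mollifier $\rho_\delta$ then gives smooth functions $u_{j,\delta}$ with the following properties:
\begin{itemize}
\item $u_{j,\delta} \ge u$ on $B_j'$ (psh functions lie below their averages);
\item $u_{j,\delta} \to u$ uniformly on $\overline{B_j'}$ as $\delta \to 0$ (uniform continuity of $u$);
\item $H u_{j,\delta} \ge (1-\lambda/2)\gamma$ on $B_j$ (continuity of $\gamma$ and the size of the shrinking).
\end{itemize}

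\textbf{Step 2: gluing.} Fix cutoffs $\theta_j$ that are positive on $B_j$ and equal to $-\infty$, or very negative, near $\partial B_j'$. Choose constants $\eta_j>0$ small compared with $\lambda$ on $B_j'$, and define
$$\tilde u = \widetilde{\max}_j\,\bigl(u_{j,\delta_j} + \eta_j\theta_j\bigr),$$
where $\widetilde{\max}$ is a regularized maximum. This step carries the main difficulty. We must arrange three things.
\begin{itemize}
\item Near $\partial B_j'$ the $j$-th term never realizes the maximum. Since $\theta_j$ is very negative there and the other terms are at least $u$ up to small errors, this holds once $\delta_j$ is so small that $|u_{j,\delta_j}-u|<\eta_j \min\theta_j$-type margins.
\item The negative Hessian contribution $\eta_j H\theta_j$ is absorbed by $\tfrac{\lambda}{2}\gamma$, using that the $\eta_j$ are small.
\item The total error $\tilde u - u$ is at most $\lambda$. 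This follows from $\delta_j \ll \eta_j \ll \lambda$, with the $\eta_j$ fixed before the $\delta_j$.
\end{itemize}
The regularized maximum of smooth functions, each with Hessian at least $(1-\lambda)\gamma$, again has Hessian at least $(1-\lambda)\gamma$, and it is smooth. This gives $\tilde u \in C^\infty(\Omega)$ with $u \le \tilde u \le u+\lambda$ on $\Omega$. Replacing $\lambda$ by a smaller positive continuous function if necessary, we may assume $\lambda \to 0$ at $\partial\Omega$.

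\textbf{Step 3: extension.} Set $\tilde u = u$ on $X\setminus\Omega$. The resulting function is continuous because $\lambda \to 0$ at $\partial\Omega$. It is psh on all of $X$: psh is a local property, it holds on $\Omega$ and on the interior of the complement, and near $\partial\Omega$ the function $\tilde u$ is a limit of $\max(u, \cdot)$-type expressions. Alternatively, one can use the local sub-mean-value property together with $\tilde u \ge u$ and $\tilde u = u$ at boundary points.

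For the last sentence of the statement, take $\Omega = X$ and $\gamma = Hu$. The main obstacle is Step 2: fixing the order in which the constants are chosen ($\lambda$, then $\eta_j$, then $\delta_j$), so that the gluing is simultaneously smooth, strictly psh, and within $\lambda$ of $u$.
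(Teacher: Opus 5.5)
Your proposal is correct and is essentially the standard argument of \cite[Thm 5.21]{demailly1997complex}, which is all the paper relies on, since it cites the result without proof: there your cutoff $\eta_j\theta_j$ is a small quadratic $\delta_\alpha(r_\alpha^2-|z|^2)$ on each coordinate ball, and the remaining steps match yours (gluing the mollified local pieces by the regularized maximum of Lemma~\ref{lm: reg-max}, fixing the cutoff size before the mollification radius, and extending by $u$ outside $\Omega$ after arranging $\lambda\to 0$ at $\partial\Omega$). Two small repairs are needed: the Hessian bound in Step 1 must hold on the whole part of $B_j'$ where the $j$-th term can realize the maximum, not only on $B_j$ (the same convolution estimate gives this), and for the last sentence you cannot literally take $\gamma=Hu$, because for continuous $u$ this is only a current; instead, use the local bounds $Hu\ge\epsilon_\alpha|dz|^2$ from strict plurisubharmonicity together with a partition of unity to produce a continuous positive form $\gamma$ with $Hu\ge\gamma$ on $X$.
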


The regularized max function is useful for patching:

\begin{lemma}\cite[Lemma 5.18]{demailly1997complex} \label{lm: reg-max}
Let $\theta \in C^\infty(\mathbb{R},\mathbb{R})$ be a nonnegative function with support in $[-1,1]$ such that
$\int_{\mathbb{R}} \theta(h)\,dh = 1$ and $\int_{\mathbb{R}} h\,\theta(h)\,dh = 0$.
For arbitrary $\eta = (\eta_1,\ldots,\eta_p) \in (0,+\infty)^p$, the function
\begin{equation}
M_\eta(t_1,\ldots,t_p)
=
\int_{\mathbb{R}^p}
\max\{t_1 + h_1,\ldots,t_p + h_p\}
\prod_{1 \le j \le p} \theta(h_j/\eta_j)
\, dh_1 \cdots dh_p
\end{equation}
possesses the following properties:
\begin{enumerate}
\item $M_\eta(t_1,\ldots,t_p)$ is non decreasing in all variables, smooth and convex on $\mathbb{R}^p$;

\item $\max\{t_1,\ldots,t_p\} \le M_\eta(t_1,\ldots,t_p)
\le \max\{t_1+\eta_1,\ldots,t_p+\eta_p\}$;

\item $M_\eta(t_1,\ldots,t_p)
= M_{(\eta_1,\ldots,\widehat{\eta_j},\ldots,\eta_p)}
(t_1,\ldots,\widehat{t_j},\ldots,t_p)$
if $t_j+\eta_j \le \max_{k\ne j}\{t_k-\eta_k\}$;

\item $M_\eta(t_1+a,\ldots,t_p+a)=M_\eta(t_1,\ldots,t_p)+a$,
$\quad \forall a\in\mathbb{R}$;

\item if $u_1,\ldots,u_p$ are plurisubharmonic and satisfy
$H(u_j)_z(\xi)\ge\gamma_z(\xi)$ where $z\mapsto\gamma_z$
is a continuous hermitian form on $T_X$, then
$u=M_\eta(u_1,\ldots,u_p)$ is plurisubharmonic and satisfies
$H u_z(\xi)\ge\gamma_z(\xi)$.
\end{enumerate}
If $\eta_1=\cdots=\eta_p=\delta$, we will write $M_\delta$.
\end{lemma}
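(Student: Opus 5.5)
The statement is the regularized max lemma, cited from Demailly. The plan is to verify each property directly from the integral formula. The key reformulation is to substitute $h_j = \eta_j s_j$, giving
$$M_\eta(t) = \int_{[-1,1]^p} \max_j\{t_j + \eta_j s_j\}\,\prod_j \theta(s_j)\,ds.$$
This exhibits $M_\eta$ as an average, against a probability measure, of the functions $t \mapsto \max_j\{t_j+\eta_j s_j\}$. Each of these is convex, nondecreasing in every variable, and satisfies $\max_j(t_j+\eta_j s_j + a) = \max_j(t_j+\eta_js_j)+a$. Averaging preserves all three facts, which gives convexity, monotonicity, and property (4).

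\textbf{Smoothness.} Write the original form as a convolution of the Lipschitz function $\max$ with the smooth compactly supported kernel $\prod_j \eta_j^{-1}\theta(\cdot/\eta_j)$ (after the change of variable $u = t+h$). Convolution with a smooth compactly supported kernel is smooth.

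\textbf{Property (2).} For the lower bound, fix $k$. Then $\max_j\{t_j+h_j\} \ge t_k + h_k$, and integrating gives $t_k + \int h_k\theta(h_k/\eta_k)\,dh_k/\eta_k = t_k$, using the vanishing first moment. Taking the max over $k$ gives the lower bound. The upper bound is immediate because $|h_j|\le\eta_j$ on the support.

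\textbf{Property (3).} If $t_j+\eta_j \le \max_{k\ne j}\{t_k-\eta_k\}$, then on the support $t_j+h_j \le t_j+\eta_j \le \max_{k\neq j}(t_k+h_k)$. So the $j$-th entry never realizes the max. The integrand is then independent of $h_j$, and integrating out $h_j$ (total mass $1$ after normalization) gives $M$ of the remaining variables.

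\textbf{Property (5).} This is the main step. For smooth $u_j$, $u = M_\eta(u_1,\dots,u_p)$ is smooth, and the chain rule gives
$$i\partial\bar\partial u = \sum_j \frac{\partial M}{\partial t_j}\, i\partial\bar\partial u_j + \sum_{j,k}\frac{\partial^2 M}{\partial t_j\partial t_k}\, i\partial u_j\wedge\bar\partial u_k.$$
The second term is $\ge 0$ by convexity of $M$. In the first term, $\partial M/\partial t_j \ge 0$, and $\sum_j \partial M/\partial t_j = 1$ by differentiating property (4) in $a$. Hence $Hu \ge \sum_j (\partial_j M)\gamma = \gamma$.

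For general (continuous, or merely psh) $u_j$, I would argue without derivatives of the $u_j$. Restrict to a holomorphic disk. A nondecreasing convex function of subharmonic functions is subharmonic. To keep the lower bound $\gamma$ on the Hessian, locally subtract a smooth function $q$ with $Hq$ close to $\gamma$, so that each $u_j - q$ is psh. Property (4) gives $M_\eta(u_1,\dots,u_p) = M_\eta(u_1-q,\dots,u_p-q)+q$, and the first term is psh, so $Hu \ge Hq$. Letting $Hq\to\gamma$, using continuity of $\gamma$, yields the bound.

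I expect this approximation step, making the Hessian bound with a continuous $\gamma$ rigorous for non-smooth $u_j$, to be the only delicate point. The rest is routine.
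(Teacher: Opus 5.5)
Your argument is correct and is essentially the standard proof in Demailly, which the paper simply cites without reproducing. That proof averages the convex, coordinatewise nondecreasing, translation-equivariant maps $t\mapsto\max_j\{t_j+h_j\}$, then uses the chain rule with $\sum_j\partial M_\eta/\partial t_j=1$ from (4), and subtracts a local quadratic $q$ with $Hq$ slightly below $\gamma_{z_0}$ to handle non-smooth $u_j$. One small point: for (2)--(4) to hold, the kernel must be normalized as $\prod_j\eta_j^{-1}\theta(h_j/\eta_j)$. As literally written, $M_\eta$ is $\prod_j\eta_j$ times the normalized average, and (2)--(4) generally fail. Your substitution $h_j=\eta_js_j$ silently assumes this normalization, so you should state it explicitly.
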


The following is a variant of a lemma of Varouchas \cite{varouchas1984stabilite}, which allows to patch together two locally defined smoothings:

\begin{lemma} \label{Varouchas lemma}
Let $X$ be a complex manifold, $\phi$ a  continuous strictly plurisubharmonic function on $X$. Let $X = V \cup W$ be an open cover, $W' = X \RM \bar V$ and $V' = X\RM \bar W$. Let $\varphi^V_t$ and $\varphi^W_t$ be smoothing family of $\phi|_V$ and $\phi|_W$. Then there exists a smoothing family $\varphi_t$ on $X$, for small enough $t$, such that $\varphi_t = \varphi^V_t$ on $V'$ and $\varphi_t = \varphi^W_t$ on $W'$. 
\end{lemma}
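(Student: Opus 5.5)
The plan is to patch with a cutoff built from the regularized maximum of Lemma \ref{lm: reg-max}, following Varouchas. First shrink the cover. Choose open sets $V'' \Supset V'$ and $W'' \Supset W'$, with $\overline{V''} \subset V$ and $\overline{W''} \subset W$, such that $V'' \cup W''$ still contains a neighborhood of every point. Concretely, $\overline{V'} \subset V$ and $\overline{W'} \subset W$, and we interpolate between these.

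Next choose smooth functions $\chi_V$ and $\chi_W$ with the following properties:
\begin{enumerate}
\item $\chi_V$ is defined on $V$ and tends to $-\infty$ near $\partial V$.
\item $\chi_V = 0$ on $V'$.
\item $\chi_V \le -1$ on $V \setminus V''$.
\end{enumerate}
The function $\chi_W$ is chosen analogously on $W$. In practice one takes $\chi$ to be a large negative constant, say $-1$, outside the shrunken set and simply extends $\varphi^V_t + \epsilon\chi_V$ by $-\infty$.

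Because $\chi_V$ is not plurisubharmonic, we scale it by a small $\epsilon>0$. Fix a compact exhaustion. Strict plurisubharmonicity of $\varphi^V_t$ does not by itself give a uniform lower bound on the Hessian independent of $t$. To get one, first replace $\phi$ by $\phi - \delta\psi$ for a smooth strictly psh exhaustion-type function $\psi$ (locally, $|z|^2$ in charts; with a partition argument this gives $H\phi \ge \gamma$ for some continuous positive form $\gamma$). Then take the smoothings with $H\varphi_t \ge \gamma/2$, which is what Richberg's Theorem \ref{thm:richberg} provides. With this bound, $\varphi^V_t + \epsilon \chi_V$ is strictly psh once $\epsilon\,|H\chi_V| < \gamma/4$. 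If needed, let $\epsilon$ be a positive continuous function and absorb its derivatives in the same way.

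Now define
$$\varphi_t = M_{\delta}\bigl(\varphi^V_t + \epsilon\chi_V,\ \varphi^W_t + \epsilon\chi_W\bigr),$$
where a term is treated as $-\infty$ outside its domain, and $\delta$ and $t$ are small compared with $\epsilon$. We then check the required properties using the properties of $M_\delta$ and the estimates $|\varphi^V_t - \phi| < t$, $|\varphi^W_t - \phi|<t$.

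\begin{itemize}
\item On $V' \setminus \overline{W''}$: the first entry is within $t$ of $\phi$, while the second entry is $\le \phi + t - \epsilon$ or absent. If $2t + 2\delta < \epsilon$, property (3) of Lemma \ref{lm: reg-max} gives $\varphi_t = \varphi^V_t$. Symmetrically, $\varphi_t = \varphi^W_t$ on $W' \setminus \overline{V''}$.
\item On $V' \cap W''$: this case needs care. Arrange $\chi_W \le -1$ on $W \setminus W''$ and choose $W''$ disjoint from $V'$. This is possible because $V' = X \setminus \overline W$ is disjoint from $W$, hence from $W''$. So $V' \cap W'' = \emptyset$, and likewise $W' \cap V''=\emptyset$. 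This makes the previous item sufficient.
\item Near $\partial V$ only the $W$-entry is finite, so by property (3) $\varphi_t$ is smooth there. In general $\varphi_t$ is smooth and strictly psh by property (5).
\item Finally, $|\varphi_t - \phi| \le t + \delta + \epsilon\sup|\chi|$. After reparametrizing (take $\epsilon,\delta$ proportional to $t$, compatible with the Hessian condition), this is $< Ct$. Rescaling $t$ then gives a smoothing family.
\end{itemize}

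The main obstacle is the uniformity on noncompact $X$. The constants $\epsilon$, $\delta$, and the threshold on $t$ must be chosen as positive continuous functions rather than constants. Moreover, the Hessian lower bound must hold uniformly in $t$, which in turn requires the given smoothing families to satisfy such a bound. If the families do not satisfy it automatically, I would first re-smooth them via Theorem \ref{thm:richberg} relative to $\gamma$, and then run the argument above with $\epsilon(x)$ chosen locally on a compact exhaustion. The conclusion holding only ``for small enough $t$'' is exactly what allows this.
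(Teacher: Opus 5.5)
\textbf{The approach matches the paper's, but the boundary step has a gap.} Your strategy is the paper's: perturb the two given smoothings by small multiples of cutoffs, take $M_\delta$ with $t,\delta\ll\epsilon$, use property (3) of Lemma \ref{lm: reg-max} near $\partial V$ and $\partial W$, and property (5) for plurisubharmonicity. The gap is at the step that makes the gluing work, namely smoothness across $\partial V$.

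There the $W$-entry must exceed the $V$-entry by $2\delta$ on a whole neighbourhood of $\partial V$. Since the given smoothings may differ by almost $2t$, this requires $\epsilon(\chi_W-\chi_V)\ge 2t+2\delta$ there. Your conditions never couple the two cutoffs; they force $\chi_W=0$ only on $W'=X\setminus\overline V$.

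Your sentence ``near $\partial V$ only the $W$-entry is finite'' is not right. For $x\in V$ near $\partial V$ the $V$-entry is the finite value $\varphi^V_t(x)-\epsilon$. This uses the bounded $\chi_V$ that your own estimates $\epsilon|H\chi_V|<\gamma/4$ and $|\varphi_t-\phi|\le t+\delta+\epsilon\sup|\chi|$ require; condition (1) is at odds with both.

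Continuity of $\chi_W$ rescues the argument when $\partial V\subset\overline{W'}$, but not in general. Take $X=\C$, $V=\C\setminus\{0\}$, $W=\{|z|<1\}$. Then $W'=\emptyset$, so $\chi_W\equiv-1$ is allowed. Near $0$ both entries are lowered by $\epsilon$, and since $M_\delta(a,b)>\max(a,b)$ for $|a-b|$ small, $\varphi_t$ jumps at $0$.

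The missing ingredient is complementarity of the cutoffs: require $\chi_V=0$ on $V\setminus W''$ and $\chi_W=0$ on $W\setminus V''$, which is possible because $V''\cup W''=X$. Equivalently, as the paper does, take $\chi_V=-\eta^W$ and $\chi_W=-\eta^V$ for a partition of unity $\eta^V+\eta^W=1$ subordinate to $\{V,W\}$. With $\epsilon=s$ these entries differ from the paper's $s\eta^V+\varphi^V_t$, $s\eta^W+\varphi^W_t$ only by the common constant $s$. Near $\partial V$ the entries are then $\varphi^V_t-\epsilon$ and $\varphi^W_t$, the margin is $\epsilon-2t>2\delta$, and $\varphi_t=\varphi^W_t$ there, matching $W'$.

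\textbf{The Hessian concern.} Your worry about a $t$-uniform Hessian lower bound is legitimate. The paper's proof uses such a bound tacitly when it chooses $t(s)$ so that $s\eta^V+\varphi^V_{t(s)}$ stays strictly plurisubharmonic. For arbitrary smoothing families this can fail: a $C^0$-small radial modification makes $H\varphi_t$ arbitrarily small at a point.

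Your remedy does not prove the statement as given. Re-smoothing $\varphi^V_t$ via Theorem \ref{thm:richberg} replaces the given family, while the conclusion asks for $\varphi_t=\varphi^V_t$ on $V'$ for the given one. Nor can a variable $\epsilon(x)$ compensate for Hessian bounds that degenerate at infinity: $t$ is a single number, and you need $2t+2\delta<\epsilon(x)$ wherever domination is used.

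You have two options. The first is to add the uniform bound as a hypothesis. This is natural in the paper's applications, where $\varphi^V$ comes from Theorem \ref{thm:richberg} with $H\tilde u\ge(1-\lambda)\gamma$.

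The second avoids perturbing the given families at all:
\begin{itemize}
\item[(a)] Pick opens $\overline{V'}\subset V_1\subset\overline{V_1}\subset V$ and $\overline{W'}\subset W_1\subset\overline{W_1}\subset W$.
\item[(b)] Let $\psi_t$ be a Richberg smoothing of $\phi$ with $H\psi_t\ge(1-t)\gamma$.
\item[(c)] Let $\chi$ be smooth, equal to $-1$ near $\overline{V'}\cup\overline{W'}$ and $+1$ near $\partial V_1\cup\partial W_1$; these closed sets are disjoint.
\item[(d)] Set $\varphi_t=M_\delta(\varphi^V_t|_{V_1},\,\psi_t+\epsilon\chi,\,\varphi^W_t|_{W_1})$, reading restricted entries as $-\infty$ off their domains.
\end{itemize}
The same property (3) checks give $\varphi_t=\varphi^V_t$ on $V'$, $\varphi_t=\varphi^W_t$ on $W'$, and smoothness across $\partial V_1$ and $\partial W_1$. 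Only $\psi_t$, whose Hessian you control, has to absorb $\epsilon H\chi$. On noncompact $X$ you still need a uniformity condition such as $\sup|H\chi|/\gamma<\infty$.
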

\begin{proof} 
Let $1 = \eta^V + \eta^W$ be a smooth partition of unity, so that $\eta^V|_{V'}=1$ and $\eta^W|_{W'}=1$. 
For small enough $s$, and $t(s)$ small enough depending on $s$, such that  $t(s) < s/4$, and
$s \eta^V + \varphi^V_{t(s)}$ and $s \eta^W + \varphi^W_{t(s)}$ are still strictly plurisubharmonic on $V$ and $W$, respectively. We consider, for small enough $s$, 
$$ \varphi_{t(s)}(z)= \begin{cases}
    \varphi^V_{t(s)} & z \in \bar V' \\
    M_{s/4}(s \eta^V + \varphi^V_t, s \eta^W + \varphi^W_t)  & z \in V \cap W \\
    \varphi^W_{t(s)} & z \in \bar W' 
\end{cases}
$$
Near $\d V$, we have 
$$ |  (s \eta^V + \varphi^V_t) - (s \eta^W + \varphi^W_t) | = |(-s  + \varphi^V_t) - \varphi^W_t| > s - |\varphi^V_t - \varphi^W_t| > s - 2t > s/2, $$
hence $M_{s/4}(s \eta^V + \varphi^V_t, s \eta^W + \varphi^W_t) = \varphi^W_t$ (Lemma \ref{lm: reg-max}, (3)), hence $\varphi_{t(s)}(z)$ smoothly transit to $\varphi^W_t$. Similarly near $\d W$, $\varphi_{t(s)}(z)$ smoothly transit to $\varphi^V_t$. This shows $\varphi_{t(s)}(z)$ is globally defined and psh (Lemma \ref{lm: reg-max}, (5)). 
\end{proof}

\subsection{Factorizable K\"ahler potentials on symmetric powers}
\def\bz{\mathbf{z}}

Fix a real quadratic form $\phi: \C = \R^2 \to \R$ with positive trace.   

Then $\phi$ is a strictly plurisubharmonic function.  Likewise the function $\phi_n: \C^n \to \R$ given by $(z_1, \ldots, z_n) \mapsto \sum \phi(z_i)$ is strictly plurisubharmonic. 
Evidently $\phi_n$ descends to a continuous (not smooth) strictly plurisubharmonic function on $\Sym^n \C$, which we also denote $\phi_n$.

We have the following translation equivariance: 

\begin{lemma} \label{translation equivariance}
        $\phi_{n}(\mathbf{z}) = \phi_{n}(\mathbf{z} - c(\mathbf{z})) + |\mathbf{z}| \cdot  \phi(c(\mathbf{z}))$.
\end{lemma}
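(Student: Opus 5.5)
This is the parallel axis theorem for a quadratic form. I write $\phi(z) = B(z,z)$, where $B$ is the symmetric real bilinear form on $\R^2 = \C$ polarizing $\phi$. In the flat setting $\C$ with its Euclidean metric, every configuration is pre-confined. By the remark after the definition of center of mass, $c(\mathbf{z}) = \frac{1}{n}\sum_i z_i$ with $n = |\mathbf{z}|$. I also read $\mathbf{z} - c(\mathbf{z})$ as the configuration $(z_1 - c, \ldots, z_n - c)$ translated by $-c$, where $c := c(\mathbf{z})$.

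The computation goes as follows.

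1. Lift $\mathbf{z}$ to an ordered tuple $(z_1,\dots,z_n) \in \C^n$. Both sides are symmetric functions, so it suffices to check the identity on $\C^n$.

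2. Expand each term bilinearly:
\[
\phi(z_i) = \phi\bigl((z_i - c) + c\bigr) = \phi(z_i - c) + 2B(z_i - c, c) + \phi(c).
\]

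3. Sum over $i$. The cross terms give $2B\bigl(\sum_i (z_i - c), c\bigr)$. This vanishes because $\sum_i z_i = nc$ by the formula for the center of mass.

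4. What remains is $\sum_i \phi(z_i - c) + n\,\phi(c)$, which is exactly $\phi_n(\mathbf{z} - c(\mathbf{z})) + |\mathbf{z}|\,\phi(c(\mathbf{z}))$.

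No real obstacle arises. The only point needing care is step 3, which relies on identifying the Riemannian center of mass of Lemma \ref{lm:center} with the arithmetic mean in the flat case. This holds because $g(z) = \sum_i |z_i - z|^2$ has gradient $2\sum_i (z - z_i)$, which vanishes exactly at the mean.

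Note that the Euclidean metric defining $c$ need not be related to $\phi$. The argument uses only that $\phi$ is a quadratic form, so that $\phi(a+b) - \phi(a) - \phi(b)$ is bilinear, together with the fact that the mean kills linear terms.
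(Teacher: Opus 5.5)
Your proof is correct and is essentially the paper's argument: the paper also expands $\phi(z_i)$ about $c(\mathbf{z})$ using the bilinear form associated to $\phi$, and observes that the cross terms sum to zero because $\sum_i (z_i - c(\mathbf{z})) = 0$. Your justification that the center of mass is the arithmetic mean in the flat case, and your explicit factor of $2$ (which the paper absorbs into its $\langle \cdot,\cdot\rangle_\phi$), only make explicit what the paper leaves implicit.
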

\begin{proof}
    By the relation between quadratic and bilinear forms:
    $$\phi(z_i) - \phi(z_i - c(\mathbf{z})) - \phi(c(\mathbf{z})) = \langle z_i - c(\mathbf{z}), c(\mathbf{z}) \rangle_{\phi} $$
    Summing over $i$ kills the RHS and gives the result. 
\end{proof}

That is, $\phi_n$ is the sum of functions on the factors under the splitting
$$ \Sym^n \C \to \C \times \Sym^n_0 \C, \quad \mathbf{z} \mapsto (c(\mathbf{z}), \mathbf{z} - c(\mathbf{z})). $$
Here,  $\Sym^n_0(\C) \subset \Sym^n(\C)$ is the locus of points with center of mass at the origin.

Finally, we argue that $\phi_n$ can be smoothed preserving strict plurisubharmonicity and also certain relevant structural properties:

\begin{proposition}\label{pp: smoothing-KP}
Fix a clustering rule.  Then there exist positive numbers $\epsilon_1 > \epsilon_2 >  \cdots$, and  family of smooth strictly plurisubharmonic functions $\varphi_{n,t}$ on $\Sym^n \C$, $t \in (0, \epsilon_n)$, such that 

\begin{enumerate}
\item \label{normalization and smoothing condition} $\varphi_{1,t} = \phi$, and $(\varphi_{n,t})_{t \in (0, \epsilon_n)}$ is a smoothing of $\phi_n$. 
    \item \label{cluster additivity condition}
    For $\mathbf{z} \in \Sym^n \C$ and a cluster decomposition  $\mathbf{z} = \mathbf{z}_1 + \ldots + \mathbf{z}_m$, and $t \in (0, \epsilon_n)$, we have 
    \begin{equation}\label{cluster additive} \varphi_{n,t}(\mathbf{z}) = \sum_{i=1}^m \varphi_{|\mathbf{z}_i|,t}(\mathbf{z}_i). 
    \end{equation}
    \item  \label{center of mass condition}  
    $$\varphi_{n, t}(\mathbf{z}) = \varphi_{n, t}(\mathbf{z} - c(\mathbf{z})) + |\mathbf{z}| \cdot  \phi(c(\mathbf{z}))$$
\end{enumerate}
\end{proposition}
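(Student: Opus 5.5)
We construct $\varphi_{n,t}$ by induction on $n$, with $\varphi_{1,t}=\phi$; condition (\ref{center of mass condition}) for $n=1$ is trivial. Suppose $\varphi_{k,t}$ have been built for all $k<n$ and $t\in(0,\epsilon_k)$, satisfying the three conditions. We use Lemma \ref{translation equivariance} to reduce the construction on $\Sym^n\C$ to one on the slice $\Sym^n_0\C$. We seek $\psi_{n,t}$ on $\Sym^n_0\C$ and then set $\varphi_{n,t}(\mathbf{z}) = \psi_{n,t}(\mathbf{z}-c(\mathbf{z})) + n\,\phi(c(\mathbf{z}))$, so that (\ref{center of mass condition}) holds by construction. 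Since $\Sym^n\C\cong \C\times\Sym^n_0\C$ holomorphically (translation is holomorphic and $c$ is linear), strict plurisubharmonicity of $\varphi_{n,t}$ reduces to that of $\psi_{n,t}$ on $\Sym^n_0\C$. Likewise the $C^0$ bound reduces to $|\psi_{n,t}-\phi_n|<t$ there.

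\textbf{Step 1: the model near the diagonal stratum.} Cover $\Sym^n_0\C$ by two opens. Let $V$ be the set of $\mathbf{z}$ with $r(\mathbf{z})<r_n$, i.e. the confined ones. Let $W$ be the set of $\mathbf{z}$ admitting a nontrivial cluster decomposition, i.e. $W=\bigcup_{\tau\neq(n)}\Clus^\tau$.

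On $W$, define $\varphi^W_t(\mathbf{z})=\sum_i\varphi_{|\mathbf{z}_i|,t}(\mathbf{z}_i)$ using any nontrivial cluster decomposition. This is well defined by Corollary \ref{cluster decompositions common coarsening} and Proposition \ref{cluster decompositions common refinement}: any two decompositions have a common refinement, and the refinement of a cluster is a cluster decomposition of that cluster, so the inductive cluster additivity (\ref{cluster additivity condition}) for $k<n$ makes the two sums agree. Locally this is a sum of smooth strictly psh functions of independent coordinates, since near a point the decomposition varies continuously, $\Clus^\tau$ being open. Hence $\varphi^W_t$ is smooth strictly psh and $t$-close to $\phi_n$; more precisely $mt$-close, so we rescale $t$.

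On $V$, which is a relatively compact neighborhood of the origin of $\Sym^n_0\C$, use Richberg's theorem (Theorem \ref{thm:richberg}) to produce a smoothing $\varphi^V_t$ of $\phi_n$.

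\textbf{Step 2: patching.} Apply Lemma \ref{Varouchas lemma} to glue $\varphi^V_t$ and $\varphi^W_t$ into a smoothing $\psi_{n,t}$ on $\Sym^n_0\C$, shrinking to some $\epsilon_n<\epsilon_{n-1}$. The lemma outputs $\varphi^W_t$ exactly on $W'=X\setminus\bar V$, which is the non-confined region. Cluster additivity (\ref{cluster additivity condition}) for a nontrivial decomposition must hold on all of $W$, however, including points that are also confined.

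\textbf{Main obstacle.} The key difficulty is arranging that the transition region avoids the cluster-decomposable locus wherever additivity is demanded. The plan is to shrink $V$ via Lemma \ref{relax r and d}. Choose auxiliary rules $(r'_i,d'_i)$ with $r'_i>r_i$ and $d'_i<d_i$, and build the inductive objects with respect to the relaxed rule so that (\ref{cluster additivity condition}) holds on the larger clustering regions. Then patch in the region where the relaxed and original notions differ: $V$ is confined for $r_n$, while the patching is done across the band $r_n\le r<r'_n$. The clustering inequalities $d_k>6r_{k-1}$ should force points in this band, if nontrivially decomposable for the original rule, to lie in $W'$. This requires a careful, though elementary, check.

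Finally, (\ref{cluster additivity condition}) for the trivial decomposition is a tautology. For a nontrivial decomposition, $\mathbf{z}-c(\mathbf{z})$ has the translated decomposition. Combining Lemma \ref{translation equivariance}-style bookkeeping with (\ref{center of mass condition}) for each cluster, the identity $\sum|\mathbf{z}_i|\phi(c(\mathbf{z}_i))=n\phi(c)+\sum|\mathbf{z}_i|\phi(c(\mathbf{z}_i)-c)$ converts additivity on the slice into additivity on $\Sym^n\C$.
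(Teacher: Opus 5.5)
\textbf{There is a genuine gap in the patching step, exactly where you flag the main obstacle.}

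With $V=\{r(\mathbf{z})<r_n\}$, Lemma \ref{Varouchas lemma} only guarantees $\psi_{n,t}=\varphi^W_t$ on $X\setminus\bar V$. But configurations with a nontrivial cluster decomposition lie deep inside $V$. Already for $n=2$, take $\mathbf{z}=\{-x,x\}$ with $d_2/2<x<r_2$. It is confined, and it also splits into two separated singletons. The inequality $r_2>d_2+r_1$ makes this range large. At such points the glued function is $\varphi^V_t$, or a regularized max of $\varphi^V_t$ and $\varphi^W_t$, so condition (2) fails.

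Patching across a radial band $r_n\le r(\mathbf{z})<r'_n$ cannot repair this. Whether $\mathbf{z}$ is nontrivially decomposable is governed by the separation thresholds $d_k$ and the confinement of the pieces, not by the radius of $\mathbf{z}$. Your proposed check, that decomposable points of the band lie in $W'$, is vacuous: $r>r_n$ already puts them in $X\setminus\bar V$. It never touches the problematic points at small radius.

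More structurally, no cover with $W=\Clus^{>(n)}(\C)_0$ exactly can work. You need $\Clus^{>(n)}(\C)_0\subset X\setminus\bar V$, which forces $V\cap W=\emptyset$. Then $V\cup W=X$ would disconnect the connected space $\Sym^n_0\C$.

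\textbf{The missing idea is to choose the cover by the cluster loci themselves, as the paper does.} Take
\begin{equation*}
V=\Clus^{(n)}(\C)_0\setminus\overline{\Clus^{>(n)}(\C)_0}, \qquad W=\Clus^{>(n)}_{\{r'_i,d'_i\}}(\C)_0,
\end{equation*}
where $W$ is the nontrivially decomposable locus for a relaxed rule from Lemma \ref{relax r and d}.
\begin{itemize}
\item $W$ contains $\overline{\Clus^{>(n)}(\C)_0}$: limits of decompositions satisfy the non-strict inequalities, hence the strict relaxed ones. So $V\cup W=X$.
\item On $W$, $\varphi^W_t$ is still defined by the additive formula, since an original cluster decomposition is also a relaxed one.
\item Then $X\setminus\bar V$ is the interior of $\overline{\Clus^{>(n)}(\C)_0}$, which contains $\Clus^{>(n)}(\C)_0$. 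So the Varouchas output equals the additive formula on the whole original decomposable locus.
\item The transition happens only in the collar $W\setminus\overline{\Clus^{>(n)}(\C)_0}$, where (2) is not demanded.
\end{itemize}
Your phrase ``patch in the region where the relaxed and original notions differ'' is the right idea. What breaks the argument is identifying that region with a band in $r(\mathbf{z})$ and keeping $V$ as the confined ball.

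\textbf{A secondary point.} ``Rescaling $t$'' inside $\varphi^W_t$ would itself destroy (2), which uses the same $t$ on both sides. Instead, keep the lower-level errors small (e.g.\ $o(t)$) and shrink $\epsilon_n$.
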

\begin{proof}

We will inductively construct $\varphi_{n,t}$. The case for $n=1$ is done, we may set $\epsilon_1=1$ for concreteness. Assume the cases for $n<n_0$ are done, and we are considering $n=n_0$.  For any $\mathbf{z}$ with nontrivial cluster decomposition, i.e. $\mathbf{z} \in \mathrm{Clus}^{>(n)}(\C) := \bigcup_{\tau > (n)} \mathrm{Clus}^\tau(\C)$, the expression of 
Condition \eqref{cluster additivity condition} is independent of the choice of nontrivial cluster decomposition by Proposition \ref{cluster decompositions common refinement} and the inductive hypothesis for Condition \eqref{cluster additivity condition}. 
Thus we use this formula to define $\varphi_{n, t}$ on all $\mathbf{z}$ with nontrivial cluster decomposition.  Since the space of elements admitting a cluster decomposition of a given type is open, we may check smoothness of the LHS of \eqref{cluster additive} via the RHS, which is, by induction, smooth for  $t > 0$. 

It remains to define $\varphi_{n, t}$ on the locus $\mathrm{Clus}^{(n)}(\C) \setminus  \mathrm{Clus}^{>(n)}(\C)$ of elements with no nontrivial cluster decomposition. By Condition \ref{center of mass condition}, we only need to define it on $\Clus^{(n)}(\C)_0 \RM \Clus^{>(n)}(\C)_0$.  
$$V = \Clus^{(n)}(\C)_0 \RM \overline{\Clus^{>(n)}(\C)_0},\quad W = \Clus^{>(n)}_{\{r'_i, d'_i\}}(\C)_0 $$
where $\Clus^{\tau}_{\{r'_i, d'_i\}}(\C)_0$ is defined using a more relaxed clustering rules $r'_i > r_i, d'_i < d_i$ (see Lemma \ref{relax r and d} for its existence). If $\bz \in W$ and $\bz$ decomposes (using the more relaxed rule) as $\bz = \sum_i \bz_i$, we set $\varphi_{n,t}^W(\bz) = \sum_i \varphi_{|\bz_i|,t}(\bz_i)$. We also use Richberg theorem \ref{thm:richberg} to have a family of smoothing $\varphi^V_{n,t}$ on $V$. Then we may apply Varouchas Lemma \ref{Varouchas lemma} to patch the two together and get $\varphi_{n,t}$ on $\Clus^{(n)}(\C)_0$, such that on $\Clus^{>(n)}(\C)_0$ it agrees with existing $\varphi_{n,t}$.

We now extend  to all of $\Sym^n(\C)_0$ by enforcing condition \eqref{center of mass condition}.  It remains to check this extension continues to satisfy  conditions \eqref{normalization and smoothing condition} and \eqref{cluster additivity condition}.  Condition \eqref{normalization and smoothing condition} follows from 
Lemma \ref{translation equivariance}.  

For condition \eqref{cluster additivity condition}, we must check that the expression \eqref{cluster additive} inductively ensures condition \eqref{center of mass condition} on $\bigcup_{\tau > (n)} \mathrm{Clus}^\tau(\C)$. 
Suppose given a cluster decomposition $\mathbf{z}=\mathbf{z}_1+\cdots+\mathbf{z}_m$.  Then:

\begin{eqnarray*} \varphi_{n,t}(\mathbf{z}) - \varphi_{n, t}(\mathbf{z} - c(\mathbf{z})) & \stackrel{(2)}{=} & \sum \varphi_{|\mathbf{z}_i|, t} (\mathbf{z}_i) - \varphi_{|\mathbf{z}_i|, t} (\mathbf{z}_i - c(\mathbf{z})) \\
& \stackrel{(3)}{=} & 
\sum \varphi_{|\mathbf{z}_i|, t} (\mathbf{z}_i - c(\mathbf{z}_i)) + |\mathbf{z_i}| \phi(c(\mathbf{z}_i)) -  
\varphi_{|\mathbf{z}_i|, t} (\mathbf{z}_i - c(\mathbf{z}_i)) - |\mathbf{z_i}| \phi(c(\mathbf{z}_i) - c(\mathbf{z}))
\\
& = & \sum |\mathbf{z_i}|(
\phi(c(\mathbf{z}_i)) - \phi(c(\mathbf{z}_i)- c(\mathbf{z}))) \\
& = & \sum |\mathbf{z}_i| (\langle c(\mathbf{z_i}) - c(\mathbf{z}), c(\mathbf{z}) \rangle + \phi(c(\mathbf{z})) \\ 
& = & |\mathbf{z}| \phi(c(\mathbf{z}))
\end{eqnarray*}
This completes the proof.
\end{proof}

We will now axiomatize  relevant properties of $\varphi$. 
Let $\Re: \Sym^n \C \to \Sym^n \R$ be the coordinate-wise real part.  We note: 

\begin{lemma} \label{re inverse can induce splitting}
    Fix a clustering rule,  which we use  both for $\R$ and $\C$. Then 
    $\Re^{-1}(\Clus^{ \tau}(\R)) \subset  \Clus^{\ge \tau}(\C)$.
\end{lemma}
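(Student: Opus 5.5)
We must show: if $\Re(\mathbf{z})$ admits a cluster decomposition of type $\tau$ in $\R$, then $\mathbf{z}$ admits a cluster decomposition of some type $\ge \tau$ in $\C$, i.e. one refining a decomposition of type $\tau$. The plan is to lift the real decomposition and then refine. Write $\Re(\mathbf{z}) = \sum_i \mathbf{x}_i$ as a type $\tau$ cluster decomposition in $\R$. Each point of $\Re(\mathbf{z})$ is the real part of a point of $\mathbf{z}$, so this induces a (not necessarily cluster) decomposition $\mathbf{z} = \sum_i \mathbf{z}_i$ with $\Re(\mathbf{z}_i) = \mathbf{x}_i$. A multiplicity subtlety arises when distinct points of $\mathbf{z}$ share a real part. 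By Lemma \ref{lem: cluster distance bound}, coincident real parts always lie in the same cluster $\mathbf{x}_i$, so the lift is well defined: send every point with real part in $\mathbf{x}_i$ to $\mathbf{z}_i$.

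Next, by the Corollary above, each $\mathbf{z}_i \in \Sym^{|\mathbf{x}_i|}(\C)$ admits some cluster decomposition $\mathbf{z}_i = \sum_\alpha \mathbf{z}_{i,\alpha}$; take for instance the finest one from Proposition \ref{cluster decompositions common refinement}. Then apply Corollary \ref{contraction induces merging} with $f = \Re : \C \to \R$, which is a contraction for the Euclidean metrics since $|\Re z - \Re w| \le |z-w|$. Its hypotheses are that $f(\mathbf{z}) = \sum_i f(\mathbf{z}_i)$ is a cluster decomposition, which holds by construction, and that each $\mathbf{z}_i = \sum_\alpha \mathbf{z}_{i,\alpha}$ is a cluster decomposition. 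It therefore yields that $\mathbf{z} = \sum_{i,\alpha}\mathbf{z}_{i,\alpha}$ is a cluster decomposition in $\C$. Its type is obtained from $\tau$ by splitting the parts $|\mathbf{x}_i|$ into $\sum_\alpha |\mathbf{z}_{i,\alpha}|$, so it is $\ge \tau$ in the paper's ordering, and hence $\mathbf{z} \in \Clus^{\ge\tau}(\C)$.

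The main obstacle is that Corollary \ref{contraction induces merging} is stated without proof, so I should sanity-check it in this case. Separation between $\mathbf{z}_{i,\alpha}$ and $\mathbf{z}_{j,\beta}$ with $i \neq j$ needs two facts. First, centers of mass in the vector space case are averages, so $\Re(c(\mathbf{z}_{i,\alpha})) = c(\Re \mathbf{z}_{i,\alpha})$. Second, $\Re\mathbf{z}_{i,\alpha}$ is a sub-multiset of the confined cluster $\mathbf{x}_i$, so its center lies within $r_{|\mathbf{x}_i|}$ of $c(\mathbf{x}_i)$. Combining these, $|c(\mathbf{z}_{i,\alpha}) - c(\mathbf{z}_{j,\beta})| \ge d_{|\mathbf{x}_i|+|\mathbf{x}_j|} - r_{|\mathbf{x}_i|} - r_{|\mathbf{x}_j|}$. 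This exceeds $d_{|\mathbf{z}_{i,\alpha}|+|\mathbf{z}_{j,\beta}|}$ by the inequalities $d_{a+b} > 3(r_a+r_b)$ and the monotonicity of $d$, exactly as in the proof of the lemma preceding that corollary. Separation within the same $i$ and confinement hold by the choice of the $\mathbf{z}_{i,\alpha}$. One more point to confirm is that a sub-multiset of a confined set in $\R$ has its center within $r_{|\mathbf{x}_i|}$ of $c(\mathbf{x}_i)$; this holds because that center is an average of points each within $r(\mathbf{x}_i)$ of $c(\mathbf{x}_i)$.
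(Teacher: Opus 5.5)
Your proof is correct and follows the paper's argument: the paper's entire proof is to apply Corollary \ref{contraction induces merging} to the contraction $\Re$, and you spell out the lift $\mathbf{z}=\sum_i \mathbf{z}_i$ and the choice of cluster decompositions of the $\mathbf{z}_i$ that the corollary needs. (Two small points in your optional sanity check: the case $\mathbf{z}_{i,\alpha}=\mathbf{z}_i$, $\mathbf{z}_{j,\beta}=\mathbf{z}_j$ should be handled directly via $|c(\mathbf{z}_i)-c(\mathbf{z}_j)|\ge |c(\mathbf{x}_i)-c(\mathbf{x}_j)|>d_{|\mathbf{x}_i|+|\mathbf{x}_j|}$, and in every other case you need the growth bound $d_k>6r_{k-1}>6d_{k-1}$, since monotonicity of $d$ alone does not give $d_{|\mathbf{z}_{i,\alpha}|+|\mathbf{z}_{j,\beta}|}<\tfrac{2}{3}d_{|\mathbf{x}_i|+|\mathbf{x}_j|}$.)
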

\begin{proof} 
Apply Corollary \ref{contraction induces merging} to the contraction $\Re$.
\end{proof}

Recall that for any ordered  partition $\vec \tau = (\tau_1, \ldots, \tau_\ell)$ of $n$, there is a natural embedding $\Sym^{\vec \tau}(\R) \hookrightarrow{\R^{\ell}}$ taking  $\sum \tau_i z_i$ (with $z_1 < z_2 < \cdots < z_{\ell}$) to $(z_1, z_2, \ldots,  z_{\ell})$. 

Consider the diagonal embedding 
\begin{eqnarray*}
   i_{\vec{\tau}}: \R^\ell & \to & \R^n \\
   (z_1, \ldots, z_\ell) & \mapsto & (\underbrace{z_1, \ldots z_1}_{\tau_1},  \ldots, \underbrace{z_\ell, \ldots, z_\ell}_{\tau_\ell})
\end{eqnarray*}
We give $\R^\ell$ a metric by pulling back the standard metric on $\R^n$ (which gives  $\sum_{i=1}^\ell \tau_i (d z_i )^2$), and use this to identify $T \R^\ell \cong T^* \R^\ell$. 
\begin{equation} \label{cotangent embedding re inverse}
    \Re^{-1}(\Sym^{\vec \tau}(\R)) \hookrightarrow \C^\ell \cong T \R^{\ell } \cong T^* \R^{\ell}
\end{equation}

We fix a clustering rule and take the preimages of the identification  \eqref{product structure clus R}: 
\begin{align} \notag \Re^{-1}\Clus^{\vec{\tau}}(\R)  & \xrightarrow{\sim}   (\Clus^{\vec{\tau}}(\R) \cap \Sym^{\vec{\tau}}(\R) ) \times \left( \Re^{-1} (\Clus^{(\tau_1)}(\R)_0) \times \cdots \times \Re^{-1} (\Clus^{(\tau_\ell)}(\R)_0) \right) \\
\label{re inverse product}
 = & \,\, \Re^{-1}(\Clus^{\vec{\tau}}(\R) \cap \Sym^{\vec{\tau}}(\R) ) \times \left( (\Re^{-1} \Clus^{(\tau_1)}(\R))_0 \times \cdots \times (\Re^{-1} \Clus^{(\tau_\ell)}(\R))_0 \right) &
\end{align}
(In the first line, the second factor consists of configurations whose center of mass has real part zero; in the line, the same terms are now asked to center of mass zero, and the imaginary part of the center of mass is now moved into the first factor.)

\begin{definition} \label{cm standard 1-form}
    We say a strictly plurisubharmonic function $\varphi: \Sym^n \C \to \R$ is {\em cm standard} if $\varphi|_{\Re^{-1}\Clus^{\vec{\tau}}(\R)}$ is a sum $\varphi_{cm} + \varphi_{int}$ 
    of functions on the two factors of the product on the RHS of \eqref{re inverse product}, and moreover $$\varphi_{cm}(z_1, \ldots, z_\ell) = \sum_{i = 1}^\ell \tau_i \mathrm{Im}(z_i)^2.$$

    We say a 1-form on $\lambda$ on $\Sym^n \C$ is {\em cm standard} if, for any partition $\tau$, the restriction $\lambda|_{\Re^{-1}\Clus^{\vec{\tau}}(\R)}$ is a sum $\lambda_{cm} + \lambda_{int}$ of 1-forms pulled back from the  factors, and moreover $\lambda_{cm}$ is pulled back from the standard cotangent form via \eqref{cotangent embedding re inverse}.
    It is evident that if $\varphi$ is a cm-standard   strictly plurisubharmonic function, then $d^c \varphi$ is a cm standard 1-form. 
\end{definition}

Assuming $\varphi_{int}$ is controlled at infinity, 
it follows from the required form of $\varphi_{cm}$ that such $\varphi$ is proper, hence determines a Stein structure.

\begin{corollary}
 The    $\varphi_{n, t}$ of Proposition \ref{pp: smoothing-KP} is a cm-standard plurisubharmonic function on $\Sym^n \C$. 
\end{corollary}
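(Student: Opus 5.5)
We need to show that, on $\Re^{-1}\Clus^{\vec\tau}(\R)$, $\varphi_{n,t}$ splits as a sum of a function of the first factor of \eqref{re inverse product} and a function of the remaining factors, and that the first summand is $\sum_i \tau_i \Im(z_i)^2$. The plan is to reduce to single clusters using cluster additivity and then apply the center of mass condition to each cluster.

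\textbf{Step 1: reduce to clusters.} Fix $\mathbf{z} \in \Re^{-1}\Clus^{\vec\tau}(\R)$, and let $\Re(\mathbf{z}) = \sum_i \mathbf{x}_i$ be the type $\vec\tau$ real cluster decomposition, ordered left to right. Write $\mathbf{z} = \sum_i \mathbf{z}_i$ with $\Re(\mathbf{z}_i) = \mathbf{x}_i$. By Lemma \ref{re inverse can induce splitting}, or more precisely Corollary \ref{contraction induces merging} applied to the contraction $\Re$ with trivial inner decompositions, each pair $\mathbf{z}_i, \mathbf{z}_j$ is separated. The hypothesis only gives confinement of the real parts, not of the $\mathbf{z}_i$ themselves. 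To handle this, refine each $\mathbf{z}_i$ by its own cluster decomposition (which exists by the Corollary on existence). Corollary \ref{contraction induces merging} then says the union of these refinements is a cluster decomposition of $\mathbf{z}$. Condition \eqref{cluster additivity condition} of Proposition \ref{pp: smoothing-KP}, applied twice, yields
$$\varphi_{n,t}(\mathbf{z}) = \sum_i \varphi_{\tau_i,t}(\mathbf{z}_i).$$
This decomposition is canonical, since it is determined by the real cluster decomposition. It is also locally constant in type on $\Re^{-1}\Clus^{\vec\tau}(\R)$, so the formula holds on the whole open set.

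\textbf{Step 2: split each cluster by its center of mass.} Apply condition \eqref{center of mass condition} to each $\mathbf{z}_i$:
$$\varphi_{\tau_i,t}(\mathbf{z}_i) = \varphi_{\tau_i,t}(\mathbf{z}_i - c(\mathbf{z}_i)) + \tau_i\,\phi(c(\mathbf{z}_i)).$$
The first term is a function of $\mathbf{z}_i - c(\mathbf{z}_i) \in (\Re^{-1}\Clus^{(\tau_i)}(\R))_0$, which is exactly the $i$-th internal factor of \eqref{re inverse product}. The centers $c(\mathbf{z}_i)$ are the coordinates of the first factor under $\mathbf{z} \mapsto \sum_i \tau_i c(\mathbf{z}_i)$. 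Set
$$\varphi_{int} = \sum_i \varphi_{\tau_i,t}(\mathbf{z}_i - c(\mathbf{z}_i)), \qquad \varphi_{cm} = \sum_i \tau_i\,\phi(c(\mathbf{z}_i)).$$
Then $\varphi_{n,t} = \varphi_{cm} + \varphi_{int}$ on $\Re^{-1}\Clus^{\vec\tau}(\R)$, with each summand a function on one factor.

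\textbf{Step 3: match the normalization.} The definition requires $\varphi_{cm} = \sum_i \tau_i \Im(z_i)^2$, so the fixed quadratic form must be $\phi(z) = \Im(z)^2$. This has positive trace, as Proposition \ref{pp: smoothing-KP} requires, so I would take this to be the choice of $\phi$ implicit in the Corollary. If $\phi$ were instead a general quadratic form, the statement should be read with $\varphi_{cm} = \sum_i \tau_i\,\phi(z_i)$.

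\textbf{Main obstacle.} Step 1 is the delicate part. One must check that the decomposition $\mathbf{z} = \sum \mathbf{z}_i$ identifying \eqref{re inverse product} agrees with the one to which condition \eqref{cluster additivity condition} applies. The issue is that $\mathbf{z}_i$ need not be confined in $\C$ even though $\Re(\mathbf{z}_i)$ is confined in $\R$. The refinement argument via Corollary \ref{contraction induces merging}, together with the independence of the choice of cluster decomposition (Proposition \ref{cluster decompositions common refinement}), is what handles this.
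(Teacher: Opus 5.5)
Your proposal is correct and is essentially the paper's argument: split $\bz$ along the real cluster decomposition of type $\vec\tau$, refine to a cluster decomposition in $\C$ via Corollary~\ref{contraction induces merging}, and apply conditions (2) and (3) of Proposition~\ref{pp: smoothing-KP} with $\phi(z)=\Im(z)^2$. The only difference is in where (3) is applied. The paper applies it to the finest clusters $\bz_{i,j}$ and regroups the center-of-mass terms via Lemma~\ref{translation equivariance}. You instead use (2) a second time to reassemble $\varphi_{\tau_i,t}(\bz_i)$ and apply (3) to $\bz_i$ directly, which makes $\varphi_{int}=\sum_i\varphi_{\tau_i,t}(\bz_i-c(\bz_i))$ manifestly a function on the internal factors, independent of the chosen refinement.
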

\begin{proof}
By definition of cm-standard psh function, we need to show for any ordered partition $\vec \tau$, $$\varphi|_{\Re^{-1}(\Clus^{\vec \tau})(\R)}(\bz) = \varphi_{cm,\tau}(\bz) + \varphi_{int,\tau}(\bz).$$ 
Since $\Re^{-1}(\Clus^{\vec \tau}) \In \cup_{\sigma \geq \tau} \Clus^\sigma(\C)$, we may assume $\bz$ is contained in some refinement $\sigma$ of the (unordered) partition $\tau$. Assume $\bz = \sum_{i=1}^l \bz_i = \sum_{i,j} \bz_{i,j}$, where $Re(\bz) = \sum_{i} \Re(\bz_i)$ is the ordered partition of $\tau$, and the $\bz_{i,j}$ is the $\sigma$ refinement. By Proposition \ref{pp: smoothing-KP} (2) and (3), with $\phi(z) = \Im(z)^2$, we know
$$ \varphi|_{\Clus^\sigma(\C)} = \sum_{i,j} |\bz_{i,j}| |\Im(c(\bz_{i,j}))|^2 + \sum_{i,j} \varphi_{| \bz_{i,j}|,t}(\bz_{i,j} - c(\bz_{i,j})) $$
For each $i$, the sum over $j$ can be simplified using Lemma \ref{translation equivariance}, 
$$ \sum_{j} |\bz_{i,j}| |\Im(c(\bz_{i,j}))|^2 = |\bz_{i}| |\Im(c(\bz_i))|^2 + \sum_{j} |\bz_{i,j}| |\Im(c(\bz_{i,j}) - c(\bz_{i}))|^2. $$
We then get
$$ \varphi|_{\Clus^\sigma(\C)} = \sum_{i} |\bz_{i}| |\Im(c(\bz_i))|^2 + \sum_i \varphi_{int,\tau}(\bz_{i,j} - c(\bz_i)). $$
The first sum is $\varphi_{cm,\tau}$, and the remainder depends only on position relative to cluster centers. 
\end{proof}

\begin{proposition} \label{pp:smoothing-KP-general}
    Let $\Sigma$ be a Riemann surface with K\"ahler potential   $\phi$. Assume the Riemannian metric $g_\phi$ has a lower bound of convexity radius $r_{conv}$. Fix $N>0$, and fix a clustering rule $r_i, d_i$ for $i=1,\cdots, N$, such that $r_N, d_N \ll r_{conv}$. Let $I = T^*[-2,2] \cong \Re^{-1}([-2,2])$, with K\"ahler potential $\phi_I(z) = \Im(z)^2$. Assume there is a closed embedding $\iota_A: A := I^{\sqcup n_I} \into \Sigma$ that preserves K\"ahler potential. 
    
     Then, there exists $\epsilon_1 > \epsilon_2 >\cdots > \epsilon_N$, and smooth K\"ahler potential $\varphi_{n,t} $ on $\Sym^n(\Sigma)$, for $0 < n \leq N$ and $t \in (0, \epsilon_n)$, such that 
    \begin{enumerate}
        \item $\varphi_{1,t} = \phi$, and $\sup_{\bz}|\varphi_{n,t}(\bz) - \phi_n(\bz)| < t$ for all $t \in (0, \epsilon_n)$. 
        \item \label{general Sigma, KP factorize} If $\bz$ admits a cluster decomposition $\bz = \bz_1 + \cdots + \bz_m$, then 
        $$ \varphi_{|\bz|,t}(\bz) = \sum_{i=1}^m \varphi_{|\bz_i|,t}(\bz_i). $$
        \item If $\bz \in \Sym^n (T^*[-2+r_n,2-r_n])$, where $T^*[-2,2]$ is one component of $A$, then $\varphi_{n,t}(\bz)$ agrees with the K\"ahler potential in Prop \ref{pp: smoothing-KP}
    \end{enumerate}
\end{proposition}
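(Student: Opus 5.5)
We construct $\varphi_{n,t}$ by induction on $n$, in the same way as Proposition \ref{pp: smoothing-KP}, but with one extra patching step near the strips. The case $n=1$ is forced: $\varphi_{1,t}=\phi$. Suppose $\varphi_{m,t}$ has been built for all $m<n$, satisfying (1)--(3).

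\textbf{Step 1: the clustered locus.} On $\Clus^{>(n)}(\Sigma)$ we define $\varphi_{n,t}$ by the cluster-additive formula (2). This is well defined for two reasons.
\begin{itemize}
\item Any two nontrivial cluster decompositions have a common refinement (Proposition \ref{cluster decompositions common refinement}).
\item A sub-cluster decomposition of a cluster combines with the ambient decomposition into a cluster decomposition. So the inductive additivity of the $\varphi_{m,t}$ shows the formula is independent of the choice of decomposition.
\end{itemize}
Smoothness, strict plurisubharmonicity and the $C^0$ bound $|\varphi_{n,t}-\phi_n|<t$ (after shrinking $\epsilon_n$ so that the errors sum to less than $t$) all hold locally. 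The reason is that each $\Clus^\tau(\Sigma)$ is open and, near a point, is locally a product of the $\Sym^{|\bz_i|}$ factors.

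\textbf{Step 2: the strip locus.} On $S_n:=\Sym^n(T^*(-2,2))$ for each strip component we already have the potential of Proposition \ref{pp: smoothing-KP}. The embedding $\iota_A$ preserves the Kähler potential, so the metric there is the flat one, and convexity and centers of mass are computed in $\C$. Hence clusters in $\Sigma$ contained in the strip are exactly clusters in $\C$. It follows that on $S_n\cap\Clus^{>(n)}(\Sigma)$ the model from Proposition \ref{pp: smoothing-KP} agrees with the Step 1 definition. For this we use the inductive hypothesis (3) for the pieces, which needs each sub-cluster to lie in $\Sym^{|\bz_i|}(T^*[-2+r_{|\bz_i|},2-r_{|\bz_i|}])$. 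That containment holds because $r_{|\bz_i|}<r_n$ and the points stay within the same strip.

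One caveat: a cluster may straddle the boundary of a strip. We therefore only impose (3) on the shrunken region $T^*[-2+r_n,2-r_n]$. There an unclustered (confined) $\bz$ has diameter less than $2r_n$ and lies well inside the strip, so the model potential is defined on a neighborhood of it.

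\textbf{Step 3: the remaining locus.} It remains to handle $R_n:=\Sym^n(\Sigma)\setminus\Clus^{>(n)}(\Sigma)$. This set consists of confined configurations only, so it lies in a small neighborhood of the small diagonal. Following the proof of Proposition \ref{pp: smoothing-KP}, we cover $\Sym^n\Sigma$ by three open sets:
\begin{itemize}
\item $W$, the clustered locus for a relaxed clustering rule (Lemma \ref{relax r and d}), carrying the Step 1 potential;
\item $U$, a neighborhood of $R_n\cap\Sym^n(T^*[-2+r_n,2-r_n])$ inside the strips, carrying the model potential of Step 2;
\item $V$, the complement of $\overline{W}$, away from the strips, carrying a Richberg smoothing (Theorem \ref{thm:richberg}) of the continuous strictly psh function $\phi_n$. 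Here $\phi_n$ is the pushforward of $\sum_i\phi(z_i)$ under the finite map $\Sigma^n\to\Sym^n\Sigma$.
\end{itemize}
We then patch with Varouchas' Lemma \ref{Varouchas lemma} twice. First we patch the $W$ and $U$ potentials; these already agree on $W\cap U$ by Step 2, so no modification occurs there. Then we patch the result with the $V$ potential. We choose the partitions of unity so that $\eta^V$ vanishes on $\Clus^{>(n)}$ for the original rule and on the shrunken strip region. With this choice the output equals the prescribed potential on both regions, giving (2) and (3).

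\textbf{Main obstacle.} The delicate point is the geometry of the cover near the strip ends. We must choose the relaxed rule and the strip-shrinking margins so that the closures separate appropriately: $\overline{V}$ must avoid both the original clustered locus and $\Sym^n(T^*[-2+r_n,2-r_n])$, while $V\cup W\cup U$ still covers everything. I would first try margins such as $T^*[-2+r_n/2,2-r_n/2]$ for $U$, using $r_N\ll r_{conv}$ and the bounded geometry of $\Sigma$ to guarantee room.

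A second point to check is that the $C^0$ bound survives the regularized max. This holds because Lemma \ref{lm: reg-max}(2) adds at most $s/4$, and $t(s)<s/4$ can be absorbed by reparametrizing $t$. Finally, $\epsilon_n$ is shrunk at each stage so that all the constructions are valid for $t\in(0,\epsilon_n)$.
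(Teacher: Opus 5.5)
Your architecture is the paper's: induction on $n$, the cluster-additive definition on $\Clus^{>(n)}(\Sigma)$, the model of Proposition~\ref{pp: smoothing-KP} on the shrunken strips, Richberg smoothing on the rest, and Lemma~\ref{Varouchas lemma}. The paper simply takes your $W\cup U$ as the single open set $\Clus^{>(n)}_{\{r'_i,d'_i\}}(\Sigma)\cup\Clus^{(n)}(\mathrm{Int}(A))$.

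However, the step where you glue $W$ and $U$ because they ``already agree on $W\cap U$ by Step 2'' fails. This, rather than the geometry of the cover at the strip ends, is the real obstacle.

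\begin{itemize}
\item Step 2 compares the two potentials only on the \emph{original} clustered locus. But $W\cap U$ also contains the band of strip configurations that have a nontrivial decomposition for the relaxed rule and none for the original rule.
\item That band is exactly where the model of Proposition~\ref{pp: smoothing-KP} underwent its own Varouchas patch. There it is a regularized maximum of its relaxed-additive function and a Richberg smoothing, and near the edge of the relaxed clustered locus it \emph{is} the Richberg smoothing. So it is in general not $\sum_i\varphi_{|\bz_i|,t}(\bz_i)$ for relaxed decompositions $\bz=\sum_i\bz_i$, and your two potentials disagree there.
\item Patching them with Lemma~\ref{Varouchas lemma} instead does not save (2). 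The transition region has to separate the shrunken strips from $\partial U$, so it meets $\Clus^{>(n)}(\Sigma)$. There the regularized maximum differs from $f$ even when both potentials equal $f$.
\item The same phenomenon undermines the $W$-potential itself. Defining it by relaxed additivity needs the $\varphi_{m,t}$, $m<n$, to be additive for relaxed decompositions. Your inductive hypothesis, and the argument of Step 1, only provide this for the original rule, and the lower levels violate it on their own bands.
\end{itemize}
The paper's ``either \ldots or'' definition of $\varphi^W$ tacitly relies on the same compatibilities.

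What is missing is bookkeeping of clustering rules. Iterate Lemma~\ref{relax r and d} to fix a finite chain of increasingly relaxed rules; this is harmless since $N$ is finite. Then build every potential, both the models of Proposition~\ref{pp: smoothing-KP} and those on $\Sigma$, to be additive for a rule at least as relaxed as any rule for which its additivity is later used. In particular, the $\varphi_{m,t}$ for $m<n$ and the level-$n$ model must be additive for the relaxed rule defining $W$ at level $n$.

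With this in place, relaxed additivity on $W$ is well defined. On $W\cap U$ the model also equals $\sum_i\varphi_{|\bz_i|,t}(\bz_i)$, by its own additivity together with inductive hypothesis (3), so the gluing goes through. Since original decompositions are relaxed ones, (2) then holds for the original rule.

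Two smaller slips:
\begin{itemize}
\item $V$ should be the complement of the closure of the original $\Clus^{>(n)}(\Sigma)$ and of the shrunken strips, not of $\overline W$. Otherwise $V\cup W\cup U$ misses $\partial W$ away from the strips.
\item Shrinking $\epsilon_n$ cannot turn the error bound $mt$ for a sum over $m$ clusters into $t$. Instead, carry a bound $C_nt$ through the induction and rescale $t$ by a single constant for all $n\le N$ at the end, which preserves (2).
\end{itemize}
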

\begin{proof}
    We construct $\varphi_{n,t}$ by induction. Assume for $n < n_0$, we have $\varphi_{n,t}$ constructed that satisfies the above conditions.  The case $n=1$ is given. 
    For $n=n_0$, we only need to construct $\varphi_{n,t}$ on $\Clus^{(n)}(\Sigma)$, that is compatible with $\Clus^{>(n)}(\Sigma)$ and $\Clus^{(n)}(A)$. Let 
    $$W = \Clus^{>(n)}_{\{r'_i, d'_i\}}(\Sigma) \cup \Clus^{(n)}(Int(A)), \quad  V = \Clus^{(n)}(\Sigma) \RM ( \overline{(\Clus^{>(n)} (\Sigma) \cup \Sym^n(\iota_A(A'_n)}) $$
    where $r'_i, d'_i$ is a slight relaxed clustering rule Lemma \ref{relax r and d}
    , and $A'_n = (T^*[-2+r_n, 2-r_n])^{n_I} \In A$ is a slight shrinking of $A$. We choose $\varphi_{n,t}^V$ using Richberg's Theorem (\ref{thm:richberg}), and $\varphi^W_{n,t}(\bz)$ is either determined apply cluster decomposition of $\bz$ using the relaxed clustering rule, or by construction in Prop \ref{pp: smoothing-KP} for $\bz \in \Clus^{(n)}(Int(A))$. Then, we may apply Varouchas's Lemma (\ref{Varouchas lemma}) to patch the two parts. 
\end{proof}

We again axiomatize the relevant properties of the above construction. 
Let $\Sigma = \Sigma_L \cup_A \Sigma_R$ be a gluing configuration.  We fix an identification $A = \coprod_\alpha T^* [-2,2]$.  We will consider the projection $\pi: \Sigma \to [-2,2]$ which sends $\Sigma_L \mapsto -2$, $\Sigma_R \mapsto 2$, and sends $A \to [-2,2]$ as the projection of the cotangent bundles to the base.

For any $(b, c) \subset [a,d] \subset \ (-2,2)$, with $|b-a|>2r_n$ and $|d-c|>2r_n$, we have an embedding 
\begin{equation} \label{edge versus complement cover}
    \Omega_{l,k,r} := \Sym^{l}(\Sigma_L \setminus T^*[a,d]) \times 
    \Sym^k(\coprod_\alpha T^*(b,c)) \times \Sym^r(\Sigma_R \setminus T^*[a,d]) \hookrightarrow \Sym^{l+k+r}(\Sigma)
\end{equation}
Note that these sets, as $a, b,c,d$ vary, and $l,k,r$ varies with $n=l+k+r$, cover $\Sym^n \Sigma$. 
\begin{definition} \label{def: adapted}
    We say that a plurisubharmonic function $\phi: \Sym^n \Sigma \to \R$ is adapted to the gluing configuration if, on every open of the form \eqref{edge versus complement cover}, it is the sum of pullbacks of plurisubharmonic functions, and the factors on the $T^*(b,c)$ are functions constructed in Proposition \ref{pp: smoothing-KP} from some fixed original $\phi:T^*(b,c) \to \R$.  
    
    We say  a 1-form $\lambda$ on $\Sym^n \Sigma$ is adapted to the gluing configuration $\Sigma = \Sigma_L \cup_A \Sigma_R$ if, on every open of the form \eqref{edge versus complement cover}, $\lambda$ splits as a direct sum, and the factors on the $T^*(b,c)$ are cm-standard forms.
    Evidently $d^c$ of an adapted plurisubharmonic function is an adapted form. 
\end{definition}

\begin{lemma}
    If the $A: \coprod T^*[-2,2] \to \Sigma$ of  Proposition \ref{pp:smoothing-KP-general} arises from a gluing configuration, then the 
    plurisubharmonic functions $\varphi_{n,t}$ are adapted to the gluing configuration.  
\end{lemma}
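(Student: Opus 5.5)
The plan is to check the splitting directly on each open $\Omega_{l,k,r}$ of \eqref{edge versus complement cover}, using the cluster additivity of Proposition \ref{pp:smoothing-KP-general}(\ref{general Sigma, KP factorize}) together with its third property. Fix $a<b<c<d$ with $|b-a|,|d-c|>2r_n$, and a point $\bz=\bz_L+\bz_A+\bz_R\in\Omega_{l,k,r}$. Here $\bz_L\subset \Sigma_L\setminus T^*[a,d]$, $\bz_A\subset \coprod_\alpha T^*(b,c)$ and $\bz_R\subset\Sigma_R\setminus T^*[a,d]$. The goal is to show
$$\varphi_{n,t}(\bz)=\varphi_{l,t}(\bz_L)+\varphi_{k,t}(\bz_A)+\varphi_{r,t}(\bz_R),$$
and that $\varphi_{k,t}(\bz_A)$ agrees with the function of Proposition \ref{pp: smoothing-KP}. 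Each summand is smooth plurisubharmonic on its factor, which gives the adapted property.

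First take the finest cluster decomposition of $\bz$, from Proposition \ref{cluster decompositions common refinement}. I claim no cluster contains points from two different pieces among $\bz_L,\bz_A,\bz_R$, or from two different strips $T^*(b,c)$. A confined cluster $\mathbf{x}$ has diameter less than $2r(\mathbf{x})<2r_n$. Points of $\bz_A$ have base coordinate in $(b,c)$. Points of $\bz_L$ lie either outside $A$ or in $T^*[-2,a)$. So within one strip, the base distance between such points exceeds $b-a>2r_n$. For points in different components, or off $A$ altogether, I expect the distance to be bounded below by a quantity that is large relative to $r_n$, since $r_N\ll r_{conv}$ and the strips are embedded with the standard metric. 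Here I use that the projection to the base $[-2,2]$ is $1$-Lipschitz on $A$, and that leaving $A$ from $T^*(b,c)$ requires traveling at least $b+2$ or $2-c$ in base coordinate; I may need to shrink $r_n$ relative to the geometry to guarantee this. Consequently the clusters are partitioned into those of $\bz_L$, of $\bz_A$ (further split by strip) and of $\bz_R$.

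Next, use the fact that a sub-collection of the clusters in a cluster decomposition is again a cluster decomposition, so the clusters of $\bz_L$ form a cluster decomposition of $\bz_L$, and similarly for $\bz_A$ and $\bz_R$. Applying Proposition \ref{pp:smoothing-KP-general}(\ref{general Sigma, KP factorize}) first to $\bz$ and then, in reverse, to each of $\bz_L,\bz_A,\bz_R$ regroups the sum into the three-term splitting. For the middle factor, $\bz_A\in\Sym^k(T^*(b,c))$ and $(b,c)\subset(-2+r_n,2-r_n)$ by the margin assumption. Property (3) of Proposition \ref{pp:smoothing-KP-general} therefore identifies $\varphi_{k,t}(\bz_A)$ with the potential of Proposition \ref{pp: smoothing-KP}. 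If $\bz_A$ spans several strips, we first split by strip as above, which presumably is exactly what ``functions constructed in Proposition \ref{pp: smoothing-KP}'' means for a disjoint union of strips.

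The main obstacle is the metric separation claim in the first step. Specifically, we need that clusters cannot straddle the boundary of $T^*[a,d]$ or join points in different strips. This depends on the convexity-radius and geometry assumptions, and on $r_N$ being small compared with $b-a$ and $d-c$. I would handle it by strengthening the hypothesis $|b-a|,|d-c|>2r_n$ if needed, and then using the $1$-Lipschitz base projection on $A$.
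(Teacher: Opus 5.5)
Your proposal is correct and follows the paper's argument. The paper also notes that the gaps of width greater than $2r_n$ keep any confined cluster (diameter $<2r_n$) from mixing points of different pieces, and then applies cluster additivity (Proposition \ref{pp:smoothing-KP-general}(\ref{general Sigma, KP factorize})); your use of property (3) to identify the strip factor makes explicit a step the paper leaves implicit.

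You do not need to shrink $r_n$. Leaving a strip from $T^*(b,c)$ requires base displacement at least $\min(b+2,\,2-c)$, and this already exceeds $b-a>2r_n$, respectively $d-c>2r_n$, because $a>-2$ and $d<2$.
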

\begin{proof}
We have $\Sigma_L \setminus T^*[a,d] \sqcup T^*(b,c) \sqcup \Sigma_R \setminus T^*[a,d] \In \Sigma$, with boundary gaps with $|b-a|>2r_n$ and $|d-c|>2r_n$, hence points from different parts cannot belong to the same cluster. Then the factorizability condition of $\varphi_{n,t}$ for cluster decomposition (Prop \ref{pp:smoothing-KP-general}, condition \ref{general Sigma, KP factorize}) gives the desired product structure of $\varphi_{n,t}$. 
\end{proof}

\section{Sectorial covers for symmetric products} 

In this section we construct sectorial covers of symmetric products, by pulling back certain covers of $\Sym^n \R$.  We then prove Theorems \ref{thm: functor} and \ref{thm: intro cover}.

\subsection{Box decomposition of $\Sym^n \R$}

Fix $b > 0$.  We write 
$$ B_k(x) = [x - k b, x+kb] \subset \R$$ 
For $\mathbf{z} \in \Sym^n \R$, we take 
$$ B_{\mathbf{z}} = B_{|\mathbf{z}|}(c(\mathbf{z})). $$

We say $\bz$ is {\bf well contained} in a box $B_\bz$ if for any consecutive subset of points $\bz' \In \bz$ we have $B_{\bz'} \In B_{\bz}$.

\begin{lemma}\label{lm:no-splitting}
    If $\bz$ is well contained in $B_\bz$, then for any decomposition $\bz = \bz_1 + \bz_2 + \cdots + \bz_m$, there exists some overlap $B_{\bz_i} \cap B_{\bz_{j}}$ . 
\end{lemma}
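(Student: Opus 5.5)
The plan is a length-counting argument: the boxes of the pieces have total length exactly that of $B_{\mathbf{z}}$, so if they all sit inside $B_{\mathbf{z}}$ they cannot be pairwise disjoint. I read the statement with $m \ge 2$ and with the pieces $\mathbf{z}_i$ consecutive subsets of $\mathbf{z}$ in the order on $\R$, since that is the setting in which well containment gives information. Non-consecutive pieces are dealt with at the end.

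\emph{Containment.} Since each $\mathbf{z}_i$ is a consecutive subset of $\mathbf{z}$, well containment of $\mathbf{z}$ in $B_{\mathbf{z}}$ gives $B_{\mathbf{z}_i} \subset B_{\mathbf{z}}$ for every $i$.

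\emph{Length count.} By definition $B_{\mathbf{z}_i} = [c(\mathbf{z}_i) - |\mathbf{z}_i| b,\, c(\mathbf{z}_i) + |\mathbf{z}_i| b]$ has length $2|\mathbf{z}_i| b$. Summing over $i$,
$$\sum_i 2|\mathbf{z}_i| b = 2 n b = \mathrm{length}(B_{\mathbf{z}}).$$

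\emph{Contradiction.} Suppose the closed intervals $B_{\mathbf{z}_1}, \ldots, B_{\mathbf{z}_m}$ were pairwise disjoint. Order them along $\R$. Between consecutive ones there is then an open gap of positive length. All of them lie in $B_{\mathbf{z}}$, and these $m-1 \ge 1$ gaps also lie in $B_{\mathbf{z}}$. The total length of the union of the boxes and the gaps is therefore strictly greater than $2nb = \mathrm{length}(B_{\mathbf{z}})$, which is impossible. Hence at least two of the boxes intersect; since they are closed, boxes that merely touch at an endpoint already count as overlapping.

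\emph{Main obstacle.} The delicate point is a decomposition whose pieces are not consecutive, because well containment says nothing directly about $B_{\mathbf{z}_i}$ in that case. I would first try to reduce to the consecutive case. Group each interleaved family of pieces into its consecutive hull, and check that the boxes of the original pieces inside a hull must already meet. The reason to expect this: their centers lie in the hull, and their half-widths $|\mathbf{z}_i| b$ add up to at least half the hull's box length. If that reduction fails, I would simply state the lemma for consecutive decompositions, which is the case needed for the box cover of $\Sym^n \R$.
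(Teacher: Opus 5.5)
Your length count is the paper's argument. Assuming the boxes are pairwise disjoint, the paper observes $\bigsqcup_i B_{\mathbf{z}_i} \subsetneq B_{\mathbf{z}}$ and compares $\mathrm{len}(B_{\mathbf{z}})$ with $\sum_i \mathrm{len}(B_{\mathbf{z}_i}) = 2nb$, just as in your contradiction step. So for consecutive pieces your proof is complete and matches the paper.

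The only thing you leave open is the non-consecutive case. There you need neither the hull-grouping nor a weakened statement, because well containment already gives $B_{\mathbf{z}'} \subset B_{\mathbf{z}}$ for \emph{every} sub-multiset $\mathbf{z}' \subset \mathbf{z}$. To see this, write $\mathbf{z} = (x_1 \le \cdots \le x_n)$ and $k = |\mathbf{z}'|$. Then $\sum_{i \le k} x_i \le \sum_{z \in \mathbf{z}'} z \le \sum_{i > n-k} x_i$, so $c(\mathbf{z}')$ lies between the centers of the prefix $(x_1, \ldots, x_k)$ and the suffix $(x_{n-k+1}, \ldots, x_n)$. Both are consecutive, so by well containment both centers lie in the interval $[c(\mathbf{z}) - (n-k)b,\, c(\mathbf{z}) + (n-k)b]$. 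Hence so does $c(\mathbf{z}')$, and that is precisely the condition $B_{\mathbf{z}'} \subset B_{\mathbf{z}}$. With this, your containment step and length count apply verbatim to arbitrary decompositions.

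This is also the containment the paper's one-line proof uses without comment. In the paper's application (the uniqueness part of Lemma~\ref{lm:cluster-in-box}), the pieces $\mathbf{z}_i \cap \mathbf{z}'_j$ are consecutive anyway, so your restricted version would have sufficed there.
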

\begin{proof}
    Otherwise,  we have $\sqcup_i B_{\bz_i} \subsetneq B_{\bz}$, which implies a contradiction:
    $$ |\bz|=len(B_{\bz}) > \sum_i len(B_{\bz_i}) = \sum_i |\bz_i| = |\bz|.$$
\end{proof}

\begin{lemma}\label{lm:cluster-in-box}
For any $\mathbf{z} \in \Sym^k \R$, there is a unique decomposition $\mathbf{z} = \mathbf{z}_1 + \cdots + \mathbf{z}_m$, such that each $\bz$ is well-contained in $B_{\bz}$, and the $B_{\mathbf{z}_i}$ are disjoint and ordered $B_{\mathbf{z}_1} < B_{\mathbf{z}_2} < \cdots < B_{\mathbf{z}_m}$. 
\end{lemma}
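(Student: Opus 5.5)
Here the boxes are $B_{\bz} = [c(\bz) - |\bz| b,\, c(\bz)+|\bz| b]$, of length $2b|\bz|$. The plan is to prove existence by a merging procedure, much like the proof that cluster decompositions exist, and then uniqueness by showing any admissible decomposition is the output of that procedure. Throughout, $\bz$ is listed in increasing order, and all pieces are consecutive blocks. Consecutiveness is forced: if $B_{\bz_i} < B_{\bz_j}$ are disjoint, every point of $\bz_i$ precedes every point of $\bz_j$. This needs $\bz_i \subset B_{\bz_i}$, which holds because the one-point sub-blocks give $B_{z} = [z-b,z+b] \subset B_{\bz_i}$.

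\emph{Existence.} Start with singleton blocks, each trivially well contained. Whenever two adjacent blocks $\bz_i,\bz_{i+1}$ have overlapping boxes, replace them by $\bz_i+\bz_{i+1}$. The key claim is that the merge is again well contained. Let $\bx=\bz_i$, $\by=\bz_{i+1}$, and $\bw$ a consecutive sub-block of $\bx+\by$.
\begin{itemize}
\item If $\bw$ lies in $\bx$ (or in $\by$), then $B_\bw \subset B_\bx$ by hypothesis. It then suffices to show $B_\bx \subset B_{\bx+\by}$. This follows from $c(\bx+\by) = (|\bx| c(\bx)+|\by| c(\by))/(|\bx|+|\by|)$ together with $|c(\bx)-c(\by)| \le (|\bx|+|\by|)b$ (overlap): the shift satisfies $|c(\bx+\by)-c(\bx)| \le |\by| b$, and the radius grows by exactly $|\by| b$.
\item Otherwise $\bw = \bx'' + \by'$, with $\bx''$ a terminal block of $\bx$ and $\by'$ an initial block of $\by$. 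Use the same centroid computation with the constraints $B_{\bx''}\subset B_\bx$ and $B_{\by'} \subset B_\by$. I expect this to reduce to a one-dimensional inequality of the form $|\bw|(1-\dots)$, showing that the endpoints $c(\bw)\pm|\bw| b$ stay inside $[c(\bx+\by)-(|\bx|+|\by|)b,\ c(\bx+\by)+(|\bx|+|\by|)b]$.
\end{itemize}
The procedure terminates since the number of blocks decreases, ending with disjoint, hence ordered, boxes.

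\emph{Uniqueness.} Take two such decompositions and compare a block $\bx$ of the first with a block $\by$ of the second. First show they are nested or disjoint. Suppose instead they partially overlap, say $\bx = \bx'+\bx''$ and $\by=\bx''+\by''$ with all parts nonempty. Then $\bx''$ lies in the boundary region of both boxes. Indeed, $\by''$ is followed (in $\bz$) by points whose own boxes lie in $B_\by$, while $\bx'$ and $\bx''$ together fill $B_\bx$. Applying Lemma \ref{lm:no-splitting} and length counting to $\bx+\by''$, I aim to show that $B_{\by''}$ or $B_\by$ must meet $B_\bx$. This would contradict disjointness within one decomposition, once one notes that the neighbour of $\bx$ in the first decomposition starts inside $\by$.

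Now suppose $\bx \subsetneq \by$. Then $\by$ is a union of several consecutive first-decomposition blocks with pairwise disjoint boxes, each contained in $B_\by$ by well-containment. This contradicts Lemma \ref{lm:no-splitting} applied to $\by$.

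\emph{Main obstacle.} The main obstacle is the inequality for straddling sub-blocks in the merging step, where the crude triangle estimates may not close. If they don't, I would instead take existence to output the coarsest decomposition obtained by merging maximally, and check well-containment only for sub-blocks formed from previously merged pieces, by induction on the merge tree.
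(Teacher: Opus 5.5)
Your overall route is the paper's: merging for existence, and Lemma \ref{lm:no-splitting} for uniqueness. Your nested-case argument is correct. The genuine gap is the partial-overlap case of uniqueness, where the contradiction you aim for is not one.

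First, $B_{\mathbf{y}}\cap B_{\bx}\neq\emptyset$ holds automatically, since both boxes contain the points of $\bx''$. Boxes from \emph{different} decompositions are allowed to meet, so this contradicts nothing.

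Second, $B_{\mathbf{y}''}\cap B_{\bx}\neq\emptyset$ is also automatic: apply Lemma \ref{lm:no-splitting} to $\mathbf{y}=\bx''+\mathbf{y}''$ and use $B_{\bx''}\subset B_{\bx}$. It contradicts disjointness within the first decomposition only when $\mathbf{y}''$ is a sub-block of the next block $\bx_+$, so that $B_{\mathbf{y}''}\subset B_{\bx_+}$. If $\mathbf{y}''$ swallows $\bx_+$ and continues, nothing follows.

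The fix is to drop the nested-or-disjoint dichotomy altogether. If the two decompositions differ then, after possibly swapping them, some block $\mathbf{y}$ of one meets at least two blocks $\bx_j$ of the other. Each $\mathbf{y}\cap\bx_j$ is a consecutive sub-block of both $\mathbf{y}$ and $\bx_j$, so its box lies in $B_{\mathbf{y}}$ and in $B_{\bx_j}$. The $B_{\bx_j}$ are pairwise disjoint, so $\mathbf{y}=\sum_j \mathbf{y}\cap\bx_j$ is a nontrivial splitting into consecutive pieces with disjoint boxes. This contradicts Lemma \ref{lm:no-splitting}. It is exactly the paper's argument, and it treats nesting and partial overlap at once; your nested case is the special instance where the pieces are whole blocks.

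On existence: the paper merges any two overlapping boxes and simply asserts the result is well contained. Your restriction to adjacent blocks and your first case are therefore more careful than the source. The straddling case you left open does close with the crude bounds.

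Normalize $b=1$ and $c(\bx)=0$. Put $X=|\bx|$, $Y=|\mathbf{y}|$, $p=|\bx''|$, $q=|\mathbf{y}'|$, and $D=c(\mathbf{y})$. Then $D\in[0,X+Y]$: $D\ge 0$ because $\bx$ precedes $\mathbf{y}$, and $D\le X+Y$ by overlap. The containments $B_{\bx''}\subset B_\bx$ and $B_{\mathbf{y}'}\subset B_{\mathbf{y}}$ give $c(\bx'')\le X-p$ and $c(\mathbf{y}')\le D+Y-q$. So the right-endpoint inequality for $B_{\mathbf{w}}\subset B_{\bx+\mathbf{y}}$ follows from
\[
g(D):=(p+q)\Bigl(\tfrac{YD}{X+Y}+X+Y\Bigr)-\bigl(pX+qD+qY+2pq\bigr)\ge 0 .
\]
This is linear in $D$, with $g(0)=p(Y-q)+q(X-p)\ge 0$ and $g(X+Y)=2p(Y-q)\ge 0$. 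The left endpoint follows by reflecting $\R$, which swaps the roles of $\bx$ and $\mathbf{y}$.

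Do not use your fallback. Well-containment quantifies over \emph{every} consecutive sub-block, so checking only unions of previously merged pieces would not establish it.
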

\begin{proof}
For existence, we begin with the decomposition into one-point parts, $\mathbf{z} = z_1 + \ldots + z_k$.  Each fits into its own  box, but the boxes may overlap. If two boxes overlap, we merge the corresponding summands. It is easy to check that the new collection of points are well-contained in its box. We repeat the process until no pair of boxes overlap anymore.  

For uniqueness, if there are two such decompositions of $\bz = \sum_{i=1}^m \bz_i = \sum_{j=1}^l \bz'_j$, then (up to exchange $\bz'_\bullet$ with $\bz_\bullet$) there exists $\bz_i = \sum_{j=1}^l \bz_i \cap \bz'_j$ with more than one non-empty summand. We have $B_{\bz_i \cap \bz'_j} \In B_{\bz'_j}$ for all $j$ by 'well-contained' definition, and hence $B_{\bz_i \cap \bz'_j}$ are all disjoint for all $j$. However, this forms a non-trivial splitting of $\bz_i$, which contradicts $\bz_i$ is well-contained in $B_{\bz_i}$ by Lemma \ref{lm:no-splitting}. 
\end{proof}

We now discuss an open cover of $\Sym^n \R$, which we view as associated to the cover of $\R$ by $U_L = (-\infty,1)$ and $U_R = (-1, \infty)$. 

\begin{definition} \label{def: box opens}
Fix $\ba=(a_0, \cdots, a_n)$ with
$$ -1 \leq a_0 < a_1 < \cdots < a_n \leq 1 $$
such that $a_{i+1}-a_i > 2b$. 
For any partition $n= i + (n-i)$, $0 \leq i \leq n$, we say $\bz \in \Sym^n \R$ is in $U^{\ba}_{i, n - i}$ if $a_{i}$ 
is in the complement of the union of boxes for the box decomposition of $\bz$, and the total number of points to the left of $a_i$ is $i$. In terms of boundary defining equations, we set $$ \rho^{\ba}_{k,n-k}(\bx) = \max( \{ x_{[k-i,k]} + (i+1)b - a_k\}_{i=0}^k, \{a_k - x_{[k+1,k+1+i]} + (i+1)b\}_{i=0}^{n-k}), $$ then $$ U^{\ba}_{k,n-k}=\{\rho^{\ba}_{k,n-k} < 0\}. $$
To have a smoothed version, we set $$ \rho^{\ba, \delta}_{k,n-k} := M_\delta( \{ x_{[k-i,k]} + (i+1)b - a_k\}_{i=0}^k, \{a_k - x_{[k+1,k+1+i]} + (i+1)b\}_{i=0}^{n-k}) $$
        and set 
        $$ U^{\ba, \delta}_{k,n-k} := \{\rho^{\ba, \delta}_{k,n-k} < 0 \} $$
\end{definition}

\begin{lemma} \label{box opens cover}
    
    The collection $U^\ba_{n,0}, U^\ba_{n-1,1}, \cdots, U^\ba_{0,n}$ forms an open cover of $\Sym^n \R$. Similarly the smoothed version $\{U^{\ba,\delta}_{k,n-k}\}$. 
\end{lemma}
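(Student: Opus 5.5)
Fix $\bz \in \Sym^n \R$ with sorted points $x_1 \le \cdots \le x_n$, and let $\bz = \bz_1 + \cdots + \bz_m$ be the box decomposition of Lemma \ref{lm:cluster-in-box}. The boxes $B_{\bz_j}$ are disjoint closed intervals, ordered left to right, of total length $\sum_j 2|\bz_j| b = 2nb$. The plan is to show that some $a_k$, $0\le k\le n$, avoids all these boxes and has exactly $k$ points of $\bz$ to its left. Then $\bz \in U^{\ba}_{k,n-k}$. The only inputs are a counting argument on the gaps between consecutive $a_i$, together with the fact that each box contains its own points.

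First note that the points of $\bz_j$ lie inside $B_{\bz_j}$. This follows from well-containedness applied to the one-point consecutive subsets, since $B_{z} = [z-b,z+b] \subset B_{\bz_j}$. So if $a_k$ is outside every box, the number of points left of $a_k$ equals the total cardinality of the boxes lying to the left of $a_k$.

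Define $f(k) := \#\{\text{points} < a_k\}$, and let $g(k)$ be $k$ minus the total size of the boxes entirely to the left of $a_k$. We need some $k$ with $a_k$ uncovered and $f(k)=k$. The key step is a discrete intermediate value argument. Each box $B_{\bz_j}$ has length $2|\bz_j|b$. Since consecutive $a_i$ are more than $2b$ apart, a box of size $s=|\bz_j|$ contains at most $s$ of the $a_i$: $s+1$ of them would span more than $2sb$. Now scan $k=0,\dots,n$. Let $h(k) = k - f(k)$ when $a_k$ is uncovered.

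1. At $k=0$, if $a_0$ is uncovered, then $h(0) \le 0$.
2. At $k=n$, if $a_n$ is uncovered, then $h(n) \ge 0$.
3. Passing over a box of size $s$ that covers $c \le s$ of the $a_i$, the index $k$ increases by $c+1$ while $f$ jumps by $s$. So $h$ changes by $(c+1) - s \le 1$.
4. Between boxes, $h$ increases by exactly $1$ per step.

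So $h$ increases in steps of at most $1$ over uncovered indices. To handle covered endpoints, extend the scan with virtual positions $a_{-1} = -\infty$ and $a_{n+1} = +\infty$, which are always uncovered, with $h(-1) = -1 - 0 = -1$ and $h(n+1) = n+1-n = 1$. Here the spacing bound at the ends is not needed because those points are at infinity. Since $h$ goes from $-1$ to $1$ in upward steps of at most $1$ (downward steps allowed), it must equal $0$ at some uncovered index. That index lies in $0 \le k \le n$, since $h\neq 0$ at the virtual ones. This gives the desired $k$.

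The main obstacle is step 3: bounding how many $a_i$ a box of size $s$ can cover. I would first try the strict spacing $a_{i+1}-a_i>2b$ against the box length $2sb$. If the boundary cases, where a box endpoint equals some $a_i$, cause trouble, I would treat those using the openness convention in the definition of $U$.

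For the smoothed cover, I would use Lemma \ref{lm: reg-max}(2): $\rho^{\ba,\delta} \le \rho^{\ba} + \delta$ fails to give containment in the right direction. Instead, I would choose $\delta$ small and slightly enlarge the spacing. Since $\rho^{\ba}_{k,n-k}(\bz) < 0$ holds with margin, using a strictly smaller box parameter $b' < b$ still satisfying the spacing condition in the argument above, we get $\rho^{\ba}_{k,n-k} < -(b-b')$. Taking $\delta < b-b'$ then gives $\rho^{\ba,\delta}_{k,n-k} < 0$.
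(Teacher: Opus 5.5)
Your covering argument for the unsmoothed opens is correct, and it is essentially the paper's argument in discrete form. The paper compares the normalized box mass $g(x)=\frac{1}{2b}\int_{-\infty}^x 1_B$ with a smooth increasing $f$ satisfying $f(a_i)=i$ and $0<f'<\frac{1}{2b}$. It then finds a point outside $B$ where $f-g$ crosses zero upward, and this point is necessarily some $a_i$ with $g(a_i)=i$. You instead sample $h(k)=k-\#\{\text{points}<a_k\}$ at uncovered $a_k$, and use the same key inequality (a box of size $s$ contains at most $s$ of the $a_i$) to bound upward jumps by $1$. Your virtual endpoints play the role of the behaviour at $\pm\infty$, and the discrete version avoids constructing $f$ and analysing tangencies on $\partial B$. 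Two small repairs are needed:
\begin{itemize}
\item Between consecutive uncovered indices $k<k'$ there may be several boxes. The correct accounting is $h(k')-h(k)=1+\sum_j(c_j-s_j)\le 1$, rather than a sum of per-box increments $(c_j+1)-s_j$; the conclusion is unchanged.
\item You never address openness. It follows from $U^{\ba}_{k,n-k}=\{\rho^{\ba}_{k,n-k}<0\}$ with $\rho^{\ba}_{k,n-k}$ continuous.
\end{itemize}

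The smoothed part, however, fails as written, because the inequality goes the wrong way. Write $\rho_{(\beta)}$ for $\rho^{\ba}_{k,n-k}$ computed with box parameter $\beta$. Each term in the defining maximum is a $\beta$-independent quantity plus $(i+1)\beta$ with $i+1\ge 1$, so for $\beta_1<\beta_2$ one has $\rho_{(\beta_2)}\ge\rho_{(\beta_1)}+(\beta_2-\beta_1)$. Consequently:
\begin{itemize}
\item Running the covering argument at a smaller parameter $b'<b$ yields $\rho_{(b')}<0$, which gives no upper bound on $\rho_{(b)}$.
\item Running it at $b$ yields the margin $\rho_{(b')}<-(b-b')$ only for the parameter $b'$. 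That proves the smoothed opens for $b'$ cover, not those for the given $b$.
\end{itemize}
Also, ``still satisfying the spacing condition'' is vacuous for $b'<b$.

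The missing observation is that the hypothesis $a_{i+1}-a_i>2b$ is strict over finitely many gaps. So you may choose $b^+>b$ with $a_{i+1}-a_i>2b^+$ for all $i$. Run your argument at $b^+$, using that the box-decomposition description of $U^{\ba}_{k,n-k}$ agrees with $\{\rho<0\}$ at that parameter. For each $\bz$ this gives a $k$ with $\rho_{(b)}(\bz)\le\rho_{(b^+)}(\bz)-(b^+-b)<-(b^+-b)$. The bound $M_\delta\le\max_j t_j+\delta$ then gives $\rho^{\ba,\delta}_{k,n-k}(\bz)\le\rho_{(b)}(\bz)+\delta<0$ for every $\delta\le b^+-b$. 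The paper dismisses the smoothed case with ``similarly'', so this corrected margin argument is a genuine addition.
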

\begin{proof}
Openness is immediate because the boxes are closed.  We now argue that these opens cover.

Let $\bz \in \Sym^n \R$, with box decomposition $\bz = \bz_1 + \cdots + \bz_m$.  Let $B = \cup B_{\bz_i}$ and
$$ g(x) = \int_{-\infty}^x \frac{1}{2b} 1_B(t) dt. $$

Let $f(x)$ be a smooth monotone increasing function, such that $f(a_i)=i$,  $0< f'(x) < 1/2b$ (this is possible since the gap of $a_i$'s is bigger than $2b$), and $f'(x)=1$ for $|x| \gg 1$. 

Let $x_1< \cdots< x_N$ denote the $x$-coordinate of the graph intersections of $f(x)$ and $g(x)$.  We label the intersections by $+, -, 0$: 
\begin{itemize}
    \item $0$ for $g'(x_i)$ not defined, i.e. $x_i \in \d B$
    \item  $+$ for $g'(x_i) = 1/2b > f'(x_i)$, i.e  $x_i \in int(B)$
    \item $-$ for $g'(x_i) =0 < f'(x_i)$, i.e. $x_i \notin B$
\end{itemize}
Since for $|x| \gg 1$, $f'(x) = 1 > 0 = g'(x)$, there will be exactly one more $-$ labelled point than $+$, in particular there is at least one $-$ labelled point. For any intersection point $x$ with $-$ label, we have $g'(x)=0$ and $g(x) = i \in \{0, \cdots, n\}$, hence $f(x)=i$, thus $x=a_i$. This shows $g(a_i) = i$, i.e. $\bz \in U^\ba_{i,n-i}$. 
\end{proof}

When (as will usually be the case) the precise choice of $\ba$ does not matter, we may omit it and simply write $U_{i,j}$.

\begin{definition} \label{def:refined-cover}
    For any ordered partition $n = n_L + m_1 + \ldots + m_k + n_R$    we say $\bz \in \Sym^n \R$ is in $U^{\ba}_{n_L, (m_i)_i, n_R}$ if $a_{i}$ 
is in the complement of the union of boxes for the box decomposition of $\bz$, and the total number of points to the left of $a_{n_L}$ is $n_L$, and to the left of $a_{n_L + m_1 + \ldots + m_k}$ is $n_L + m_1 + \ldots + m_k$.  
Or equivalently,
$$ U^\ba_{n_L, (m_i)_i, n_R} := U_{n_L, m_1 + \cdots + n_R}^\ba \cap U^\ba_{n_L+m_1, m_2 + \cdots + n_R} \cap \cdots \cap U^\ba_{n_L + \cdots + m_k, n_R}.$$ 
\end{definition}

\begin{lemma} \label{U inclusions}
    We have $U^{\ba}_{n_L', (m_i')_i, n_R'} \subset U^{\ba}_{n_L, (m_i)_i, n_R}$ if and only if the ordered partition $n = n'_L + \cdots + n'_R$ is a refinement of the ordered partition $n = n_L + \cdots + n_R $
\end{lemma}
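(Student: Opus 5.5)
The plan is to translate both sets into data attached to the ordered partitions, and then compare that data. Write an ordered partition $n = n_L + m_1 + \cdots + m_k + n_R$ as its set of partial sums
$$S = \{n_L,\; n_L+m_1,\; \ldots,\; n_L + m_1 + \cdots + m_k\} \subset \{0,\ldots,n\}.$$
By Definition \ref{def:refined-cover}, $U^\ba_{n_L,(m_i)_i,n_R} = \bigcap_{s \in S} U^\ba_{s,n-s}$. Refinement of ordered partitions corresponds exactly to inclusion of partial-sum sets: $n'_L + \cdots + n'_R$ refines $n_L + \cdots + n_R$ if and only if $S \subseteq S'$. So the claim becomes
$$\bigcap_{s\in S'} U_{s,n-s} \subseteq \bigcap_{s\in S} U_{s,n-s} \iff S \subseteq S'.$$

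The direction ($\Leftarrow$) is immediate. If $S\subseteq S'$, then the left-hand intersection is over more sets than the right-hand one, so it is contained in it.

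For ($\Rightarrow$), argue by contrapositive. Suppose some $s \in S \setminus S'$. It suffices to find a point $\bz \in \Sym^n\R$ that lies in $U_{t,n-t}$ for every $t \in S'$ but not in $U_{s,n-s}$. I will construct $\bz$ explicitly.
\begin{itemize}
\item For each $j \notin S'$, place one point exactly at $a_j$ (or within $b$ of it).
\item For each $t \in S'$, do not cover $a_t$.
\item Distribute the remaining points so that the number of points left of $a_t$ equals $t$.
\end{itemize}
A cleaner concrete choice is to put point number $i$ (for $i=1,\ldots,n$) near the midpoint of $(a_{i-1},a_i)$, and then slide points to cover the gaps $a_j$ with $j\notin S'$. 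The simplest way to cover $a_j$ is to place a single point exactly at $a_j$; its box $[a_j-b,a_j+b]$ then contains $a_j$. Checking the counts: with points at positions $p_i \in (a_{i-1},a_i]$, the number of points to the left of $a_t$ is $t$. For $t\in S'$ we choose $p_t$ strictly inside $(a_{t-1},a_t)$, say at the midpoint. For $j\notin S'$ we take $p_j=a_j$, so the box decomposition covers $a_j$.

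The main obstacle is controlling the box decomposition. Boxes of length $2b$ around adjacent points may overlap and merge into larger boxes $B_{k}(c)$ of length $2kb$, and such merged boxes could swallow some $a_t$ with $t\in S'$. The spacing $a_{i+1}-a_i>2b$ is exactly the room available, but merging can grow boxes. I will handle this by choosing $\ba$-dependent positions carefully. For a maximal run of consecutive indices not in $S'$, the points sit at consecutive $a_j$'s, spaced more than $2b$ apart, so their individual boxes are disjoint and no merging occurs. Midpoint points are at distance more than $b$ from each $a_j$, so their boxes are also disjoint from the neighbouring boxes, provided the gaps exceed $2b$; strictly, this needs gaps larger than $3b$. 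If the available gaps are too tight for this, I would instead invoke the freedom that the precise $\ba$ does not matter, or simply shrink the perturbation. In either case the conclusion is that no box is merged and no box contains $a_t$ for $t\in S'$, while $a_s$ is covered. Hence $\bz\in U_{t,n-t}$ for all $t\in S'$ and $\bz\notin U_{s,n-s}$, which proves ($\Rightarrow$).
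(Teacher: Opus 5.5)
Your reduction to partial-sum sets and the ($\Leftarrow$) direction are correct. The gap is in the witness for ($\Rightarrow$). The only hypothesis on $\ba$ is $a_{j+1}-a_j>2b$, and under that hypothesis your configuration need not lie in $\bigcap_{t\in S'}U^\ba_{t,n-t}$. If $j\notin S'$ and $j+1\in S'$, you place points at $a_j$ and at $(a_j+a_{j+1})/2$. With $g=a_{j+1}-a_j$, these are $g/2$ apart, so their boxes meet whenever $g\le 4b$. The merged two-point cluster is then well contained, with box $[c-2b,c+2b]$ where $c=(3a_j+a_{j+1})/4$, and this box contains $a_{j+1}$ as soon as $g\le 8b/3$. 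Concretely, take $n=2$, $S=\{1\}$, $S'=\{2\}$ and $a_2-a_1=\tfrac52 b$: your $\bz$ is not in $U^\ba_{2,0}$, so it witnesses nothing.

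The escapes you propose are not available. $\ba$ is fixed in the statement; the remark that the precise $\ba$ ``does not matter'' is a notational convention, not a licence to change $\ba$ inside a non-inclusion claim. Nor is there a perturbation parameter to shrink, since $b$ and the gaps are both given. Separately, your concrete indexing $p_i\in(a_{i-1},a_i]$ never covers $a_0$. So when $0\in S\setminus S'$ (e.g.\ $S=\{0\}$, $S'=\{1\}$), the constructed $\bz$ lies in $U^\ba_{0,n}$.

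The missing idea is that you need not cover every $a_j$ with $j\notin S'$. To get $\bz\notin U^\ba_{s,n-s}$ it suffices that either $a_s$ is covered or the number of points to the left of $a_s$ is not $s$. So clump the points rather than spreading them. Write $S'=\{t_1<\cdots<t_r\}$ and place:
\begin{itemize}
\item all $t_{i+1}-t_i$ points of each middle group at the single location $(a_{t_i}+a_{t_{i+1}})/2$;
\item the first $t_1$ points as one clump whose box lies to the left of $a_0$;
\item the last $n-t_r$ points as one clump whose box lies to the right of $a_n$.
\end{itemize}
A clump of $m$ coincident points is well contained in its box of half-width $mb$, and $a_{t_{i+1}}-a_{t_i}>2b(t_{i+1}-t_i)$. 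Hence all boxes are pairwise disjoint and avoid every $a_t$ with $t\in S'$. By the uniqueness in Lemma \ref{lm:cluster-in-box}, the clumps are the box decomposition, and there are exactly $t$ points to the left of each $a_t$, $t\in S'$.

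Now take $s\notin S'$ with $a_s$ uncovered. The left clump lies left of $a_0$ and the right clump lies right of $a_n$, so the number of points to the left of $a_s$ is one of the partial sums $t_1,\ldots,t_r$, hence differs from $s$. This gives $\bz\in U^\ba_{n'_L,(m'_i)_i,n'_R}\setminus U^\ba_{n_L,(m_i)_i,n_R}$, with no condition on the gaps beyond $>2b$.
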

\begin{proof}
    Immediate from the definition. 
\end{proof}

\begin{lemma} \label{U intersections}
    All intersections of the $U^\ba_{i,j}$ are of the form $U^\ba_{n_L, (m_i)_i, n_R}$. 
\end{lemma}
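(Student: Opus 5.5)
Every intersection $U^\ba_{k_1,n-k_1}\cap\cdots\cap U^\ba_{k_s,n-k_s}$ can be written in the form of Definition \ref{def:refined-cover}. The index set $\{k_1,\dots,k_s\}$ is a subset of $\{0,\dots,n\}$, so after discarding repeats (since $U\cap U=U$) I may assume $k_1<k_2<\cdots<k_s$. I then read off the ordered partition
$$n = k_1 + (k_2-k_1) + \cdots + (k_s - k_{s-1}) + (n-k_s),$$
that is, I set $n_L = k_1$, $m_j = k_{j+1}-k_j$ for $1\le j\le s-1$, and $n_R = n-k_s$.

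Next I compare with Definition \ref{def:refined-cover}. There $U^\ba_{n_L,(m_i)_i,n_R}$ is defined as the intersection of the $U^\ba_{p,n-p}$ with $p$ running over the partial sums $n_L,\ n_L+m_1,\ \dots,\ n_L+m_1+\cdots+m_{k}$. With my choice of parts, these partial sums are exactly $k_1,\dots,k_s$. So the two intersections agree term by term, and the identity is tautological.

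The only points needing care are the conventions at the ends. Some $m_j$ could vanish, but only if two indices coincide, which the deduplication has ruled out. The outer parts $n_L$ or $n_R$ may be zero, for instance when $k_1=0$ or $k_s=n$. Definition \ref{def:refined-cover} has to be read as allowing this, since in that case the condition $U^\ba_{0,n}$ says that no points lie left of $a_0$, which is a legitimate open set. A single-element "intersection" is also covered: it is the case with no middle parts, where $U^\ba_{n_L,(),n_R}=U^\ba_{n_L,n_R}$.

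I expect no real obstacle, since this is bookkeeping. The one subtlety is that Definition \ref{def:refined-cover} must be read with possibly zero outer parts and an empty middle list, and I would state this convention explicitly. Optionally, Lemma \ref{U inclusions} then shows the correspondence is bijective, so distinct subsets give distinct opens whenever these are nonempty.
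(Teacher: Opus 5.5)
Your proof is correct and is essentially the paper's argument: the paper likewise orders the distinct decompositions by $n_L^j$, sets $n_L = n_L^0$, $m_j = n_L^j - n_L^{j-1}$, $n_R = n_R^k$, and observes that the intersection equals $U^\ba_{n_L,(m_i)_i,n_R}$ by definition. Your extra remarks on discarding repeated indices and allowing zero outer parts are bookkeeping that the paper leaves implicit.
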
 
\begin{proof}
    Consider some collection $U_{n_L^j, n_R^j}$ where $n_L^j + n_R^j = n$, are distinct decompositions of $n$ for $j=0,\ldots,k$.  (So $k \le n$.)  We order so that $n_L^0 < \cdots < n_L^k$.
    The point is just that we can re-express the $n_j$ in terms of appropriately chosen $m_i$.  For $j=1,\cdots,k$, let $m_j = n_L^j - n_L^{j-1}$, and let $n_L = n_L^0$, $n_R = n_R^k$. Then by definition 
    $$U_{n_L^0, n_R^0}^{\mathbf{a}} \cap \cdots \cap U_{n_L^k, n_R^k}^{\mathbf{a}}  =  U_{n_L, (m_i)_i, n_R}^{\mathbf{a}} .$$
\end{proof}

\begin{lemma} \label{U product}
    Given $n = n_L + m_1 + \ldots + m_k + n_R$, and fixed $\mathbf{a}$, we let $\mathbf{a}_L := (a_0, \ldots, a_{n_L})$, 
    $\ba_{m_1} := (a_{n_L}, \ldots, a_{n_L + m_1})$,
    $\ba_{m_i} := (a_{n_L + \ldots + m_{i-1}}, \ldots, a_{n_L + \ldots + m_{i}})$, 
    and $\ba_{n_R} := (a_{n_L+ m_1 + \cdots + m_k}, \ldots, a_n)$.  Then
    
    $$U^\ba_{n_L, (m_i)_i, n_R}= U^{\ba_L}_{n_L, 0} \times \cdots \times U_{0, m_i, 0}^{\ba_{m_i}} \times \cdots \times U^{\ba_R}_{0, n_R}$$ 
\end{lemma}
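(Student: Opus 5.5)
The plan is to prove the set-theoretic equality by showing that, for $\bz \in U^\ba_{n_L,(m_i)_i,n_R}$, the separating points $a_{p_0}, a_{p_1}, \ldots, a_{p_{k+1}}$ cut $\bz$ into consecutive blocks. Here $p_0 = n_L$, $p_j = n_L + m_1 + \cdots + m_j$, and $p_{k+1} = n - n_R$. The blocks are $\bz = \bz_L + \bz_{1} + \cdots + \bz_k + \bz_R$, where $\bz_L$ consists of the points left of $a_{n_L}$, $\bz_j$ of the points between $a_{p_{j-1}}$ and $a_{p_j}$, and $\bz_R$ of the points right of $a_{p_k}$. The definition forces $|\bz_L| = n_L$, $|\bz_j| = m_j$ and $|\bz_R| = n_R$. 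The right-hand side is understood as a subset of $\Sym^{n_L}\R \times \prod_j \Sym^{m_j}\R \times \Sym^{n_R}\R$. The map $(\bz_L, \bz_1, \ldots, \bz_R) \mapsto \bz_L + \sum \bz_j + \bz_R$ from that product to $\Sym^n \R$ is injective on the locus where the blocks are ordered with respect to the $a_{p_j}$, since the blocks can be recovered from $\bz$ by reading off positions relative to the $a_{p_j}$.

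The key step is compatibility of box decompositions with splitting at a point outside all boxes. Suppose $a$ lies outside every box of the box decomposition of $\bz$ (Lemma \ref{lm:cluster-in-box}), and write $\bz = \bz' + \bz''$ with $\bz'$ the points left of $a$. Then the box decomposition of $\bz$ is the union of the box decompositions of $\bz'$ and of $\bz''$. To prove this, I will observe that each box-decomposition piece of $\bz$ lies entirely on one side of $a$. Indeed, the box $B_{\bz_i}$ is an interval containing $\bz_i$ (the points are well contained, and singletons are centered in their boxes), so a box not containing $a$ lies on one side of it. Restricting the pieces to one side then gives a decomposition of $\bz'$ (resp. $\bz''$) into well-contained pieces with disjoint ordered boxes. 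By the uniqueness part of Lemma \ref{lm:cluster-in-box}, this is the box decomposition of $\bz'$ (resp. $\bz''$). Conversely, suppose $\bz'$ and $\bz''$ have box decompositions whose boxes all lie strictly to the left, resp. right, of $a$. Their union then has disjoint ordered boxes and well-contained pieces, so by uniqueness it is the box decomposition of $\bz$, and $a$ lies outside all its boxes.

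With this in hand I will iterate over the points $a_{p_0} < \cdots < a_{p_k}$. For the forward inclusion, let $\bz \in U^\ba_{n_L,(m_i)_i,n_R}$. Each block's box decomposition consists of boxes of $\bz$ lying between the relevant consecutive $a_{p_j}$'s. For the middle block $\bz_j$ this means $a_{p_{j-1}}$ lies to the left of all its boxes and $a_{p_j}$ to the right, with $0$ points left of the former and all $m_j$ points left of the latter. These are exactly the conditions for $\bz_j \in U^{\ba_{m_j}}_{0,m_j,0}$, where the endpoints of $\ba_{m_j}$ are $a_{p_{j-1}}$ and $a_{p_j}$. Similarly $\bz_L \in U^{\ba_L}_{n_L,0}$ and $\bz_R \in U^{\ba_R}_{0,n_R}$. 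For the reverse inclusion, the converse half of the key step assembles the blocks into a point of $U^\ba_{n_L,(m_i)_i,n_R}$: the endpoint conditions place all boxes strictly between the chosen $a$'s, so no merging occurs across them.

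I expect the only real obstacle to be the subtle point about boxes of points on the boundary. A box must lie on one side of $a$ rather than merely avoid $a$, and the conventions for the closed boxes $B_k(x)$ versus the open condition are what matter here. I will handle this using that boxes are closed intervals containing their points, and that membership in $U$ requires $a$ to lie in the open complement of their union. Indices of $\ba$ must also be matched: the intermediate $a$'s inside a block are irrelevant for the factor conditions, which involve only the endpoints of $\ba_{m_i}$.
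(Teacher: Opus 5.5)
Your proposal is correct and is essentially the paper's argument: the paper simply calls the lemma immediate from the definition. Your key step is exactly the unwinding it leaves implicit, namely that a point outside all boxes splits the box decomposition, by the uniqueness in Lemma \ref{lm:cluster-in-box}. One cosmetic slip: $p_{k+1} = n - n_R$ coincides with $p_k$, so there are only the $k+1$ separating points $a_{p_0}, \ldots, a_{p_k}$, which is in fact what you use later.
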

\begin{proof}
    Immediate from the definition. 
\end{proof}

In the remainder of the subsection, we establish a crucial property of the $U_{i,j}$: their boundaries are defined by center of mass functions.
To show this we will have to compare cluster and box decompositions.

First note any element $\mathbf{z} \in \Sym^n (\R)$ has a canonical decomposition $\mathbf{z} = \sum \tau_i z_i$ where $z_1 < z_2 < \ldots \in \R$ and $\tau_i$ are the multiplicities.  We regard $\vec\tau := (\tau_1, \ldots, \tau_l)$ as an ordered partition (and write $[\vec \tau]$ for the corresponding unordered partition), and write $\Sym^{\vec \tau}(\R) \subset \Sym^n(\R)$ for the locus of such points.  Evidently  
$$\Sym^{\tau}(\R) = \coprod_{[\vec \tau] = \tau} \Sym^{\vec{\tau}}(\R)$$
and we have a canonical identification $\Sym^{\vec \tau}(\R) \cong \R^{\ell(\vec \tau)}$, where $\ell(\vec \tau)$ is the number of parts of the partition.

We similarly write  $\mathrm{Clus}^{\vec \tau}(\R)$ for the locus of points which admit an ordered cluster decomposition $\mathbf{z} = \mathbf{z}_1 + \ldots + \mathbf{z}_l$, where $|\mathbf{z}_i| = \tau_i$ and, if $i < j$, then $c(\mathbf{z})_i < c(\mathbf{z}_j)$, or equivalently, all points in $\mathbf{z}_i$ are less than all in $\mathbf{z}_j$.  Note  there is a not-disjoint-union  
\begin{equation}\label{clus R to sym R}\Clus^{\tau}(\R) = \bigcup_{[\vec \tau] = \tau} \Clus^{\vec{\tau}}(\R)
\end{equation}

We write $\Sym^n (\R)_0$ for the subset of configurations with center of mass at zero, and similarly for $U \subset \Sym^n(\R)$ we write $U_0:= U \cap \Sym^n(\R)_0$.

\begin{lemma} 
For $\vec{\tau} = (\tau_1, \ldots, \tau_\ell)$,
there is a diffeomorphism 
\begin{eqnarray}
\label{product structure clus R}\Clus^{\vec{\tau}}(\R) & \xrightarrow{\sim} & (\Clus^{\vec{\tau}}(\R) \cap \Sym^{\vec{\tau}}(\R) ) \times \Clus^{(\tau_1)}(\R)_0 \times \cdots \times \Clus^{(\tau_\ell)}(\R)_0 \\
\notag \mathbf{z}_1 + \cdots + \mathbf{z_{\ell}} & \mapsto & ((|\mathbf{z}_1|\cdot c(\mathbf{z}_1), \ldots, |\mathbf{z}_\ell|\cdot  c(\mathbf{z}_\ell)), \mathbf{z}_1 - c(\mathbf{z}_1), \ldots, \mathbf{z}_\ell - c(\mathbf{z}_\ell))
\end{eqnarray}
We will denote the composition of this map with the projection to the first factor by $c_{\vec \tau}$. 
\end{lemma}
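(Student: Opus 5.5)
The map is well defined once we know the ordered cluster decomposition of $\mathbf{z} \in \Clus^{\vec\tau}(\R)$ is unique. Suppose $\mathbf{z} = \mathbf{z}_1 + \cdots + \mathbf{z}_\ell$ and $\mathbf{z} = \mathbf{z}'_1 + \cdots + \mathbf{z}'_\ell$ are two ordered cluster decompositions of the same ordered type $\vec\tau$. By Lemma \ref{comparing cluster decompositions}, any two clusters are disjoint or nested. On $\R$ each cluster is an interval of consecutive points, and both decompositions are ordered with the same block sizes $\tau_1, \ldots, \tau_\ell$. Hence $\mathbf{z}_i = \mathbf{z}'_i$ for all $i$, by induction from the left: $\mathbf{z}_1$ and $\mathbf{z}'_1$ are both the leftmost $\tau_1$ points, and so on. So the map is well defined as a set map.

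Smoothness comes next. Near a point $\mathbf{z}$, cluster decompositions persist: confinement and separation are open conditions. The blocks are the consecutive index ranges determined by $\vec\tau$, so the components of the map are smooth. Indeed, on $\R$ the center of mass of a block is the average of its points. After pulling back to the ordered configuration space $\{x_1 \le \cdots \le x_n\}$, the map sends the block $\mathbf{z}_i$ to the pair (block sum, block minus average), which is linear. On the quotient $\Sym^n\R$ we use the smooth structure it inherits as the closed Weyl chamber, a manifold with corners. With that structure the map is a restriction of an invertible linear map: block sums together with block-centered coordinates. So it is a diffeomorphism onto its image. The inverse is $((s_1, \ldots, s_\ell), \mathbf{w}_1, \ldots, \mathbf{w}_\ell) \mapsto \sum_i (\mathbf{w}_i + s_i/\tau_i)$, also linear.

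Next, the image is exactly the claimed product. Write the first factor's entries as $s_i = |\mathbf{z}_i|\, c(\mathbf{z}_i)$; by the identification $\Sym^{\vec\tau}(\R) \cong \R^\ell$, the corresponding configuration is $\sum \tau_i c(\mathbf{z}_i)$. For a point in the image, each $\mathbf{z}_i - c(\mathbf{z}_i)$ is confined, since confinement is translation invariant, so it lies in $\Clus^{(\tau_i)}(\R)_0$. Moreover $\sum \tau_i c(\mathbf{z}_i)$ is its own ordered cluster decomposition of type $\vec\tau$: each $\tau_i c(\mathbf{z}_i)$ has $r = 0$, and the separation conditions only involve the centers, which are unchanged. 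So the first coordinate lies in $\Clus^{\vec\tau}(\R) \cap \Sym^{\vec\tau}(\R)$.

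The main point to check is the converse. Take $\mathbf{w}_i$ confined with center $0$, and centers $c_i$ such that $\sum \tau_i c_i$ is an ordered cluster decomposition; that is, $c_1 < \cdots < c_\ell$ and $|c_i - c_j| > d_{\tau_i + \tau_j}$. We must show $\sum (\mathbf{w}_i + c_i)$ is an ordered cluster decomposition. Confinement and separation hold because they only involve radii and centers, which are prescribed. Ordering requires that every point of block $i$ be less than every point of block $i+1$. This follows from Lemma \ref{lem: cluster distance bound}, whose proof only uses the separation and confinement inequalities: the points satisfy $|x - c_i| < r_{\tau_i}$, and $c_{i+1} - c_i > d_{\tau_i + \tau_{i+1}} > r_{\tau_i} + r_{\tau_{i+1}}$. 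Thus the map is a bijection onto the product, and it is smooth with smooth inverse, which proves the lemma.
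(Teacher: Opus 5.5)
Your proof is correct and follows the same line as the paper's: the map is injective, membership of the first factor in $\Clus^{\vec\tau}(\R)\cap\Sym^{\vec\tau}(\R)$ is exactly the separation condition, and membership of the remaining factors in $\Clus^{(\tau_i)}(\R)_0$ is exactly confinement. You also spell out what the paper leaves as ``evident'' --- uniqueness of the ordered type-$\vec\tau$ decomposition, the between-block ordering in the converse via $d_{a+b} > 3(r_a+r_b)$, and smoothness via the linear block-sum and block-centered coordinates (for the Weyl-chamber smooth structure you fix on $\Sym^n\R$).
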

\begin{proof}
    The formula evidently defines an injective map to $\Sym^{\vec{\tau}}(\R) \times \Sym^{\tau_1}(\R)_0 \times \cdot \times \Sym^{\tau_\ell}(\R)_0$.  The condition that the first factor lie in $\Clus^{\vec{\tau}}(\R)$ is tautologically equivalent to the separation condition on the original decomposition $\mathbf{z}_1 + \ldots + \mathbf{z}_k$, and the condition that the remaining factors land in the $\Clus^{(\tau_k)}(\R)$ is the confinement condition.
\end{proof}

Evidently $c_{\vec \tau}$ is the restriction of $c_\tau$, so 
a function $f: \Sym^n \R \to X$ is a center of mass function iff $f|_{\mathrm{Clus}^{\vec\tau}(\R)}$ factors through $c_{\vec \tau}$.

\begin{proposition} \label{box opens have cm boundary}
    Fix $\ba=(-1\leq a_0<\cdots<a_n \leq 1)$. Fix a clustering rule, and assume that $b \gg r_i$ for $i \le n$, and fix $\delta \ll b$.  
    \begin{enumerate}
        \item  $\rho^{\ba}_{k,n-k}$ is a cm function near $\d U^{\ba}_{k,n-k}$. 
        \item  $\rho^{\ba, \delta}_{k,n-k}$ is a cm function near $\d U^{\ba, \delta}_{k,n-k}$. 
    \end{enumerate}
    
\end{proposition}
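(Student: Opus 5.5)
To show $\rho^{\ba}_{k,n-k}$ is a center of mass function near $\d U^{\ba}_{k,n-k}$, we need the following. Near any boundary point, fix a cluster decomposition $\bz = \bz_1 + \cdots + \bz_\ell$ of ordered type $\vec\tau$. We must show that $\rho^{\ba}_{k,n-k}(\bz)$ depends only on $c_{\vec\tau}(\bz) = (|\bz_1| c(\bz_1), \ldots, |\bz_\ell| c(\bz_\ell))$. Here $x_{[i,j]}$ denotes the center of mass of the consecutive points $x_i, \ldots, x_j$ (ordered increasingly).

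The key observation is that each term in the max, $x_{[k-i,k]} + (i+1)b - a_k$ or $a_k - x_{[k+1,k+1+i]} + (i+1)b$, is the center of mass of a consecutive block ending at $x_k$ or starting at $x_{k+1}$.

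The first step is to show that near the boundary, no cluster straddles the cut between $x_k$ and $x_{k+1}$. On $\d U_{k,n-k}$ we have $\rho = 0$. Every term in the max is $\le 0$, so in particular $x_k \le a_k - b$ and $x_{k+1} \ge a_k + b$. Thus $x_{k+1} - x_k \ge 2b$, and this persists approximately in a neighborhood. Since $b \gg r_i$, a confined cluster has diameter $< 2r_n \ll b$, so it cannot contain both $x_k$ and $x_{k+1}$. Hence every cluster lies entirely in $\{x_1,\dots,x_k\}$ or entirely in $\{x_{k+1},\dots,x_n\}$.

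The second step, which is the main obstacle, handles blocks that cut a cluster. A block $[k-i,k]$ is a union of whole clusters exactly when $x_{k-i}$ begins a cluster; then $x_{[k-i,k]}$ is a weighted average of cluster centers, hence a function of $c_{\vec\tau}(\bz)$. When the block cuts a cluster, I would argue that the term is \emph{not} the active maximum near the boundary, so it can be discarded. Suppose the block begins inside a cluster $\bz_j$, and compare it with the block extended leftward to include all of $\bz_j$, of length $i+1+p$ where $p$ is the number of extra points. The extended term gains $pb$ from the $(i+1)b$ summand. Its center of mass moves left by at most $O(p\, r_n/(i+1))$, since the extra points lie within $2r_n$ of points already in the block. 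Because $b \gg r_n$, the extended term strictly dominates, and the same comparison works for truncation instead of extension. The right-hand terms are handled symmetrically. So near the boundary, the maximum equals the maximum over the cluster-compatible terms only, and this maximum depends only on $c_{\vec\tau}$. One must check the domination is uniform, i.e.\ strict by a margin of order $b$. This margin should follow from the clustering inequalities $d_k > 6r_{k-1}$ together with $b \gg r_i$.

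For part (2), I would use Lemma \ref{lm: reg-max}(3): $M_\delta$ ignores any argument that lies more than $2\delta$ below another argument. The non-compatible terms are dominated by a margin of order $b \gg \delta$, so $\rho^{\ba,\delta}_{k,n-k}$ coincides locally with $M_\delta$ applied to the cluster-compatible terms alone. That expression is a smooth function of $c_{\vec\tau}(\bz)$, and the argument also covers a neighborhood of $\d U^{\ba,\delta}_{k,n-k}$, since there $\rho^{\ba,\delta} \approx 0$ up to $\delta$.
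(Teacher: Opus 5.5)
Your framework is sound, and it is more explicit than the paper's sketch, which only notes that the boundary is where a box of the box decomposition meets $a_k$ and that box centers are cm functions. The framework consists of: no cluster straddles the cut near the boundary; blocks that are unions of clusters give cm terms; the other terms are shown to be inactive; and Lemma \ref{lm: reg-max}(3) handles the smoothing. The gap is in the step you call the main obstacle. The argument there is incorrect, and the claim as you state it is false.

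Extend $B=[k-i,k]$ (with $m=i+1$ points) by the $p$ missing points of $\bz_j$, whose mean is $y$. This moves the center by $\frac{p}{m+p}\bigl(c(B)-y\bigr)$. The new points are close to the points of $\bz_j\cap B$, but not to $c(B)$, which can lie far to the right. For example, take $k=3$, $x_1=x_2=a_k-13b$, $x_3=a_k-b$, and the other points far to the right. Then $\bz$ lies on $\d U^{\ba}_{k,n-k}$. The cutting block $\{x_2,x_3\}$ contributes $-5b$, while its extension $\{x_1,x_2,x_3\}$ contributes only $-6b$. What wins here is the truncation $\{x_3\}$, contributing $0$. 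Truncation does not dominate in general either: when the block is tight it loses about $qb$. So no single comparison works, and the inequality $d_k>6r_{k-1}$ does not supply the missing margin.

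What is needed is a dichotomy, which comes from convexity. Let $q\ge 1$ be the number of points of $\bz_j$ inside $B$, with mean $u$. Let $M=m-q$, let $v$ be the mean of the other points of $B$, set $D=v-u\ge 0$, and put $h(t)=u+\frac{MD}{t+M}+(t+M)b$. The truncated, original and extended terms are then $h(0)$, $h(q)$ and $h(p+q)+\frac{p(y-u)}{m+p}$ respectively, where $|y-u|<2r_n$, and $h$ is convex. Explicitly, $h(0)-h(q)=q\bigl(\frac{D}{q+M}-b\bigr)$ and $h(p+q)-h(q)=p\bigl(b-\frac{MD}{(q+M)(p+q+M)}\bigr)$. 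This gives two cases:
\begin{itemize}
\item If $\frac{D}{q+M}\ge b+\frac{b}{n}$, the truncation wins by at least $b/n$.
\item Otherwise the extension wins by at least $\frac{p((p+q)b-Mb/n)}{p+q+M}-2r_n\ge \frac{b}{n}-2r_n$.
\item If $M=0$, only the extension exists, and it wins by $pb-2r_n$.
\end{itemize}
Both new blocks begin at a cluster boundary, so one step suffices, and the right-hand terms are symmetric.

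The margin is therefore of order $b/n$, not $b$. It requires $b\gg n\,r_n$, and $\delta\ll b/n$ for part (2). With this in place, your Step 1 and your treatment of part (2) go through unchanged.
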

\begin{proof}
$U^{\ba}_{k,n-k}$'s boundary consists of two types of generic behavior, either a size $i+1$ box hits the boundary point $a_k$ from the left, which is given by $x_{[k-i,k]} + (i+1)b = a_k$; or a size $i+1$ box hits the boundary point $a_k$ from the right, which is given by $a_k = x_{[k+1,k+1+i]} - (i+1)b$. In either case, the conditions are expressed using box centers, and box centers are cm functions. Since $b \gg r_n$, the kinks of $\rho^{\ba}_{k,n-k}$ is well inside the cluster opens $\Clus^{\vec \tau}(\R)$, hence the smoothing $\rho^{\ba, \delta}_{k,n-k}$ will remain a cm function. 
\end{proof}

\begin{proposition} \label{prop: transverse boundaries}
    The boundaries of $U^{\ba, \delta}_{k,n-k}$ of different $k$ meet transversely.
\end{proposition}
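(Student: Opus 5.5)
We prove the claim by computing differentials of the smoothed boundary functions $\rho^{\ba,\delta}_{k,n-k}$ in the ordered coordinates $x_1 \le \cdots \le x_n$ on $\Sym^n \R$. By Proposition \ref{box opens have cm boundary}, near $\d U^{\ba,\delta}_{k,n-k}$ these functions are center of mass functions, so on each $\Clus^{\vec\tau}(\R)$ they factor through $c_{\vec\tau}$. It therefore suffices to check transversality on the smooth, open first factor of \eqref{product structure clus R}, i.e. in the coordinates given by the cluster centers $(c(\bz_1),\ldots,c(\bz_\ell))$. The task becomes: at any point $\bz$ lying on $\d U^{\ba,\delta}_{k_0,n-k_0}\cap\cdots\cap \d U^{\ba,\delta}_{k_s,n-k_s}$ with $k_0<\cdots<k_s$, show that the differentials $d\rho^{\ba,\delta}_{k_j,n-k_j}$ are linearly independent.

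\textbf{Step 1: the support of each differential.} By Lemma \ref{lm: reg-max}(1),(3), each $d\rho^{\ba,\delta}_{k,n-k}$ is a convex combination, with nonnegative weights summing to $1$, of the differentials of the affine functions $x_{[k-i,k]}+(i+1)b-a_k$ and $a_k - x_{[k+1,k+1+i]}+(i+1)b$ that are within $\delta$ of the max. Here $x_{[p,q]}$ is the box center of the consecutive points $x_p,\ldots,x_q$, and near the zero level only those affine functions nearly vanishing contribute. Each such function is the signed constraint that a box adjacent to $a_k$ (on the left or right) just touches $a_k$. The differential of a box center $x_{[p,q]}$ is the averaging covector $\frac{1}{q-p+1}\sum_{p}^{q} dx_m$. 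It is nonzero, with all coefficients $\ge 0$ and supported on the indices of that consecutive block. So $d\rho_{k}$ has coefficients $\ge 0$ on indices $\le k$, coefficients $\le 0$ on indices $\ge k+1$, and is nonzero.

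\textbf{Step 2: disjointness of supports.} Since $a_{k+1}-a_k>2b$ and a nearly-touching box of size $i+1$ has length $2(i+1)b$, the blocks of points relevant to the constraint at $a_{k_j}$ lie in a neighborhood of $a_{k_j}$. I would argue, using the well-containedness and uniqueness from Lemma \ref{lm:cluster-in-box} together with $\delta\ll b$, that a single box cannot nearly touch two distinct $a_{k_j}$ from the inside in a way that forces overlapping supports. The cleaner route is that $\bz\in\d U_{k_j}$ means $a_{k_j}$ is (nearly) a boundary point of the union of boxes, with exactly $k_j$ points to its left. Then the point-index sets $\{m\}$ entering $d\rho_{k_j}$ are contained in the boxes meeting a $\delta$-neighborhood of $a_{k_j}$. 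Boxes meeting neighborhoods of different $a_{k_j}$ are distinct, since a box touching both would contain an $a$ in its interior and contradict membership, after smoothing error $\delta$. The left block for $a_{k_j}$ ends at index $k_j$ and the right block starts at $k_j+1$, so the supports for distinct $j$ are disjoint.

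\textbf{Step 3: conclusion and the main obstacle.} Nonzero covectors with pairwise disjoint supports in the basis $dx_m$ are linearly independent, which gives transversality. The same argument applies after passing to cluster-center coordinates, because blocks are unions of clusters (here $b\gg r_i$ is used). The main obstacle is Step 2 in the degenerate case where one box sits exactly between two consecutive $a_{k_j}, a_{k_{j+1}}$ and nearly touches both. Its center then appears with a positive sign in $d\rho_{k_{j+1}}$ and a negative sign in $d\rho_{k_j}$, so the supports overlap. The gap condition $a_{k+1}-a_k>2b$ only rules this out for size-one boxes. For that case I would instead argue linear independence directly: order the $\rho$'s by $k$ and observe a triangular structure. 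The leftmost index in the support of $d\rho_{k_j}$ lies strictly to the left of the supports of all later ones, because each left block ends at $k_j$, which lies inside its own region. Independence then follows by induction on $j$.
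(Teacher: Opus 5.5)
\textbf{The gap is in Steps 2 and 3, at exactly the configuration you call degenerate.} Write $U_k$ for $U^{\ba}_{k,n-k}$.

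First, your Step 2 reason for disjointness does not work. You argue that a box touching two marks ``would contain an $a$ in its interior and contradict membership''. But the intermediate marks $a_i$ with $k_j<i<k_{j+1}$ impose no condition at points of $\partial U_{k_j}\cap\partial U_{k_{j+1}}$, so a box is free to cover them.

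Second, your remark that the gap condition ``only rules this out for size-one boxes'' misses the point-count that settles the matter. This count is the heart of the paper's proof. At such a point, neither $a_{k_j}$ nor $a_{k_{j+1}}$ lies in the interior of a box, and exactly $k_j$, resp.\ $k_{j+1}$, points lie to their left. So exactly $m=k_{j+1}-k_j$ points lie strictly between them. A box of the box decomposition touching both marks must lie in $[a_{k_j},a_{k_{j+1}}]$. It is therefore built from at most $m$ of these points and has length at most $2mb$. But summing $a_{i+1}-a_i>2b$ over $k_j\le i<k_{j+1}$ gives $a_{k_{j+1}}-a_{k_j}>2mb$. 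Hence no box touches both marks, and the index blocks attached to different $a_{k_j}$ are disjoint. Your disjoint-support argument then finishes the proof.

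Your fallback for the degenerate case is not merely unnecessary but false. Suppose the block $\{k_j+1,\dots,k_{j+1}\}$ were the only active constraint for both functions. Then $\rho_{k_j}+\rho_{k_{j+1}}=2mb-(a_{k_{j+1}}-a_{k_j})$ would be locally constant, so $d\rho_{k_j}=-d\rho_{k_{j+1}}$, and transversality would genuinely fail. Consistently, both supports would have leftmost index $k_j+1$, so the strict triangularity you assert does not hold. No ordering trick can rescue independence here; the configuration has to be shown not to occur. The counting above does exactly that: lying on both zero sets would force $2mb=a_{k_{j+1}}-a_{k_j}$, which the gap condition forbids.

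For the smoothed functions, the same conclusion holds once $\delta$ is small relative to the slack $\min_i(a_{i+1}-a_i)-2b$. Then constraints that are only nearly active (within $O(\delta)$ of the maximum) at two adjacent marks still cannot share a block.
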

\begin{proof}

Fix $0 \leq k_1 <\cdots < k_r \leq n$. Consider the continuous map
$$ \Pi:=\prod_{i=1}^r \rho^{\ba}_{k_i, n-k_i} : \Sym^n \R \to \R^r.$$
For $i=0, \cdots, r$, set $m_i = k_{i+1}-k_i$. Then we have
$$ \Pi^{-1}( (-\infty, 0]^r) = \bigcap_{i=1}^r \overline{U^\ba_{k_i, n-k_i} }= \wb{U^{\ba}_{m_0, m_1, \cdots, m_r}}.$$
And in particular, we have
$$ \Pi^{-1}(0) = \bigcap_{i=1}^r \d U^\ba_{k_i, n-k_i} \In \overline{U^{\ba}_{m_0, \cdots, m_r}}. $$
We can describe the points in $\Pi^{-1}(0)$ explicitly. 

First, note that $\bz \in \d U^\ba_{k, n-k}$ if and only if
\begin{enumerate}
    \item $\bz = \bz_0 + \bz_1$, where $\bz_0 \in \Sym^k (-\infty, a_k)$ and $\bz_1 \in \Sym^{n-k}(a_k, \infty)$
    \item  Boxes covering $\bz_0$ (as points in $\R$) should be contained in $(-\infty, a_k]$;  boxes covering $\bz_1$ (as points in $\R$) should be contained in $[a_k, +\infty)$. 
    \item $a_k$ is covered by either boxes covering $\bz_0$ or boxes covering $\bz_1$. 
\end{enumerate}

Let $I_0, \cdots, I_r$ denote the connected components (from left to right) of $\R \RM \{a_{k_1}, \cdots, a_{k_r}\}$. Then $\bz \in \Sym^n \R$ belong to $\Pi^{-1}(0)$, if and only if the following conditions are met
\begin{enumerate}
    \item $\bz = \bz_0 + \cdots + \bz_r$, where $\bz_i = \bz \cap I_i$, $|\bz_i|=m_i$. 
    \item boxes covering $\bz_i$ should be contained in $\overline{I_i}$
    \item each $a_{k_i}$ should covered by some boxes' boundaries in either $\bz_{i-1}$ or $\bz_i$. 
\end{enumerate}

Since $a_{k_{i+1}} - a_{k_i} > (k_{i+1} - k_i) \cdot 2b = |\bz_{i+1}| \cdot 2b$, there is no box in interval $I_{i+1}$ that touches both endpoints. 

If we write $\bz = (x_1 \leq x_2 \cdots \leq x_n)$, and suppose the boxes that touch $a_{k_i}$ contains points $x_{s_i}, x_{s_i+1}, \cdots, x_{t_i}$. That means the boundary defining function for $U^\ba_{k_i, n-k_i}$ in a small neighborhood of $\bz$ only involves coordinates $x_{s_i}, x_{s_i+1}, \cdots, x_{t_i}$, which is different from the boundary defining function for $U^\ba_{k_j, n-k_j}$, if $j \neq i$. This shows the $r$ exterior conormal cones of $d\rho^{\ba}_{k_i, n-k_i}|_\bz$ are linearly independent. 

Now we can consider the smoothed defining functions $\rho_i^{sm}:=\rho^{\ba, \delta}_{k_i, n-k_i}$, where $\delta \ll b$. By similar argument above, if $\bz \in \cap_{i=1}^r \d U^{\ba,\delta}_{k_i, n-k_i}$, then the $x_s$ coordinates $\rho_i^{sm}$ locally uses will be different from those used by $\rho_j^{sm}$, for $i\neq j$. This shows their differentials $d \rho_i^{sm}$ are linearly independent. 

\end{proof}

\subsection{Recollections on Liouville structures}
\label{sec: sector review}

Given an exact symplectic manifold $(X, \omega = d \lambda)$, the {\em Liouville vector field} $Z$ is characterized by $\omega(Z, \cdot) = \lambda$.  Then $X$ is said to be {\em Liouville} if there is a compact submanifold-with-boundary $\overline{X} \subset X$ such that $Z$ is outwardly transverse to $\partial \overline{X}$ and the flow of $Z$ identifies $X \setminus \overline{X}$ with a half-cylinder on $\partial \overline{X}$.  In this case $\lambda|_{\partial \overline{X}}$ is a contact form, and the corresponding contact manifold is canonically independent of the choice of $\overline{X}$; we correspondingly denote it $\partial_\infty X$ and term it the {\em ideal contact boundary} of $X$.  The submanifold $\overline{X}$ is termed a {\em Liouville domain} with completion $X$.  

We now recall from \cite{GPS1, GPS2} the definitions and some properties of Liouville sectors.

\begin{definition} \cite{GPS1}
    Let $(X, \lambda)$ be a Liouville manifold.
    A closed codimension one smooth submanifold $H \subset X$ is a {\em sectorial hypersurface} if there is a function $I: H \to \R$ such that (1) $ZI = I$ in the complement of a compact set and (2) $dI$ is nonvanishing on the characteristic foliation of $H$. 
\end{definition}

    Note that if $H = \{\rho = 0\}$ with $d\rho|_H$ non-vanishing, then  (2) above is equivalent to asking the Poisson bracket $\{\rho, I\}$ to be nowhere zero. 

\begin{definition} \cite[Def. 12.2]{GPS2}
     A union of cleanly intersecting sectorial hypersurfaces $H_i$ with characteristic foliations $\xi_i$ is {\em sectorial} if (the functions $I_i$ admit extensions to neighborhoods of the $H_i$ so that) 
    the $\xi_i$ are symplectically orthogonal over the intersections of the $H_i$, the $I_i$ are in involution ($\{I_i, I_j\} = 0$), and $d I_i|_{\xi_j} = 0$ when $i \ne j$. 
\end{definition}

A model and motivating example is that if $M = \bigcup U_i$ is a cover of a smooth manifold by open sets with smooth transverse boundaries, with boundary-defining-functions $\rho_i$, then the $T^* U_i$ give a sectorial cover of $T^*M$, with witness functions given e.g. by choosing a metric and taking $I_i$ to be the function on the cotangent bundle given by evaluation of a covector at $ \nabla \rho_i$.

\subsection{Sectorial covers}

\begin{lemma} \label{lm:sectorial-base-condition}
Let $\lambda$ be a cm-standard Liouville form on $\Sym^n \C$, and let $\rho: \Sym^n \R \to \R$ be a smooth center of mass function, with regular value $t$.  Then $\Re^{-1}(\rho^{-1}(t))$ is a sectorial hypersurface.
\end{lemma}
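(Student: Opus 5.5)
Set $H := \Re^{-1}(\rho^{-1}(t))$. The plan is to reduce everything to the local product structure \eqref{re inverse product}. On each piece $\Re^{-1}\Clus^{\vec\tau}(\R)$, $\rho$ factors through the cm coordinates and $\lambda$ is a product with a standard cotangent factor. The hypersurface is then locally (cotangent bundle of a hypersurface in $\R^\ell$) $\times$ (interior factor), and we build the witness $I$ on the cotangent factor as in the model example after the definition of sectorial hypersurfaces. Recall $\Sym^n\R=\bigcup_{\vec\tau}\Clus^{\vec\tau}(\R)$ by the cluster decomposition corollary and \eqref{clus R to sym R}, and by Lemma \ref{re inverse can induce splitting} together with the cm-standard hypothesis, on $\Re^{-1}\Clus^{\vec\tau}(\R)$ we have $\lambda=\lambda_{cm}+\lambda_{int}$. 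Here $\lambda_{cm}$ is the pullback of the canonical form on $T^*\R^\ell$ via \eqref{cotangent embedding re inverse}, using the metric $\sum\tau_i dz_i^2$.

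\textbf{Step 1 (local form of $H$).} Since $\rho$ is a cm function, on $\Clus^{\vec\tau}(\R)$ we have $\rho=\rho_{\vec\tau}\circ c_{\vec\tau}$ for a smooth function $\rho_{\vec\tau}$ on an open subset of $\R^\ell$. Smoothness holds because $c_{\vec\tau}$ is a submersion onto the first factor of \eqref{product structure clus R}. Since $t$ is a regular value of $\rho$, it is a regular value of $\rho_{\vec\tau}$ there. Hence locally
\[ H \cong \pi^{-1}(\rho_{\vec\tau}^{-1}(t)) \times (\text{interior factor}), \]
where $\pi: T^*\R^\ell\to\R^\ell$ is the projection, i.e. $\Re$ on the cm factor.

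\textbf{Step 2 (local witness).} On this chart define $I_{\vec\tau}(q,p)=\langle p,\nabla\rho_{\vec\tau}(q)\rangle$, with the gradient taken in the metric $\sum\tau_i dz_i^2$, pulled back from the cm factor. Because $\lambda$ splits with $\lambda_{cm}$ the standard cotangent form, the Liouville field splits as $Z=Z_{cm}+Z_{int}$ with $Z_{cm}=p\,\partial_p$. Since $I_{\vec\tau}$ is fiberwise linear and depends only on cm coordinates, $ZI_{\vec\tau}=I_{\vec\tau}$. The characteristic foliation of $H$ is spanned by the Hamiltonian vector field of $\rho\circ\Re$, which lies in the cm factor and equals $\nabla\rho_{\vec\tau}\cdot\partial_p$ (up to sign). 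Therefore
\[ \{\rho\circ\Re,\,I_{\vec\tau}\}=\pm|\nabla\rho_{\vec\tau}|^2\neq0 \]
on $H$, and $dI_{\vec\tau}$ is nonvanishing on the characteristic foliation.

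\textbf{Step 3 (globalize).} This is where the main obstacle lies: patching the $I_{\vec\tau}$ over different $\vec\tau$. On overlaps $\Clus^{\vec\tau}\cap\Clus^{\vec\sigma}$ with $\vec\sigma$ refining $\vec\tau$, the $\vec\sigma$-cm cotangent factor contains the $\vec\tau$-cm factor as a product summand, by the computation in the corollary showing $\varphi_{n,t}$ is cm-standard (Lemma \ref{translation equivariance} applied within each cluster). Since $\rho$ factors through the coarser centers, I expect $I_{\vec\sigma}=I_{\vec\tau}$ there. If this equality fails exactly, I will instead take $I=\sum_{\vec\tau}\chi_{\vec\tau}I_{\vec\tau}$, where $\chi_{\vec\tau}$ is a partition of unity pulled back from $\Sym^n\R$, so that $Z\chi_{\vec\tau}=0$ outside a compact set. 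Each summand then satisfies $ZI=I$ there, and since every $\{\rho\circ\Re, I_{\vec\tau}\}$ is positive with a consistent sign, positivity survives the convex combination. The correction terms $I_{\vec\tau}\{\rho\circ\Re,\chi_{\vec\tau}\}$ vanish because both $\chi_{\vec\tau}$ and $\rho\circ\Re$ are pulled back from $\Sym^n\R$ and so Poisson-commute. This gives a global $I$ on $H$ with $ZI=I$ near infinity and $dI$ nonvanishing on the characteristic foliation, so $H$ is sectorial.
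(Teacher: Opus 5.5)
Your Steps 1--2 and the overlap comparison in Step 3 follow the paper's argument. The equality $I_{\vec\sigma}=I_{\vec\tau}$ that you only \emph{expect} does hold exactly. On the overlap $\rho_{\vec\sigma}=\rho_{\vec\tau}\circ P$, where $P$ (weighted averaging of the $\vec\sigma$-centers within each $\vec\tau$-cluster) is the adjoint of the isometric block-diagonal inclusion for the diagonally pulled-back metrics. Hence $\nabla\rho_{\vec\sigma}$ is just $\nabla\rho_{\vec\tau}$ placed in the $\vec\tau$-summand, and pairing with $p$ only sees the $\vec\tau$-momentum. The paper packages this as the single $S_n$-invariant function $\wt I(x,p)=p\cdot\nabla\wt\rho(x)$ on $T^*\R^n\cong\C^n$, which descends to every local $I_{\vec\tau}$ and also spares you reducing non-nested overlaps to common refinements.

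So commit to this equality and drop the partition-of-unity fallback, which would not work as written. A partition subordinate to the $\Clus^{\vec\tau}$ cover must vary in the interior (relative) directions, since near the small diagonal only $\Clus^{(n)}$ is available. Consequently $Z(\chi_{\vec\tau}\circ\Re)=Z_{int}(\chi_{\vec\tau}\circ\Re)$ need not vanish, even near infinity, for a general cm-standard $\lambda$. Moreover, $\chi\circ\Re$ need not even be smooth on $\Sym^n\C$: for instance $(\Re(z_1-z_2))^2=\tfrac12(\Re q+|q|)$ with $q=(z_1-z_2)^2$ is not smooth on $\Sym^2\C$.
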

\begin{proof}
By hypothesis, $\rho|_{\Clus^{\vec\tau}(\R)}$ is pulled back from a function $\rho_{\vec \tau}$ on $\Sym^{\vec \tau}(\R) \subset \R^{\ell(\tau)}$.  We write $\nabla \rho_{\vec \tau}$ for the gradient with respect to the metric on $\R^{\ell(\tau)}$ pulled back by diagonal embedding, and define
$I: T^* \Sym^{\vec \tau}(\R) \to \R$ by 
$I(x, p) = p(\nabla \rho_{\vec \tau})$, and define $I$ on $\Re^{-1}(\Clus^{\vec \tau}(\R))$ by pulling back under \eqref{re inverse product}.  We claim that the resulting functions agree on overlaps and hence define a function $I: \Sym^n \C \to \R$.   Indeed, if we lift $\rho$ to $\wt \rho: \R^n \to \R$, and use Euclidean metric on $\R^n$, we may define $\wt I = p \cdot \nabla_{\wt \rho}$ on $T^*\R^n \cong \C^n$. It is easy to check that $\wt I: \C^n \to \R$ descend to $I$.

It remains to check that $I$ is linear at infinity, and $\{I, \rho\}$ is nowhere vanishing along $\{\rho = 0\}$.  
These we may check in charts, where we have $\lambda = \lambda_{cm} \oplus \lambda_{int}$, hence also $\omega = \omega_{cm} \oplus \omega_{int}$ and Liouville vector field given by $Z_{cm} \oplus Z_{int}$.  Thus the verifications descend to the corresponding assertions on $T^* \R^{\ell(\tau)}$ 
\end{proof}

The following generalization of Lemma \ref{lm:sectorial-base-condition} has a similar proof: 

\begin{lemma} \label{lem: sectorial collection criterion}
If $\rho_1, \ldots, \rho_r$ are center of mass functions and $(\rho_1, \ldots, \rho_r): \Sym^n \R \to \R^r$ is regular over $(t_1, \ldots, t_r) \in \R^r$, then the hypersurfaces $\Re^{-1} \rho_i^{-1}(t_i)$ form a sectorial collection.  
\end{lemma}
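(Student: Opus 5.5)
The approach is the one used for Lemma \ref{lm:sectorial-base-condition}, now run in parallel for $r$ functions and checked in local charts. Fix a point $\bz$ on some intersection $\bigcap_{i \in S} \Re^{-1}\rho_i^{-1}(t_i)$ and an ordered partition $\vec\tau$ with $\Re(\bz) \in \Clus^{\vec\tau}(\R)$. On $\Clus^{\vec\tau}(\R)$ each $\rho_i$ factors through $c_{\vec\tau}$ as a function $\rho_{i,\vec\tau}$ on $\Sym^{\vec\tau}(\R) \subset \R^{\ell(\vec\tau)}$. By \eqref{re inverse product}, the chart $\Re^{-1}\Clus^{\vec\tau}(\R)$ is a product of an open subset of $T^*\R^{\ell(\vec\tau)}$ with an ``internal'' factor. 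Because $\lambda$ is cm standard, $\lambda = \lambda_{cm} \oplus \lambda_{int}$ there, with $\lambda_{cm}$ the standard cotangent form.

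First I define the witness functions. On $T^*\R^{\ell}$, with the metric pulled back by $i_{\vec\tau}$, I set $I_i(x,p) = p(\nabla \rho_{i,\vec\tau})$ and pull back to the chart. The lifting argument of Lemma \ref{lm:sectorial-base-condition} shows these patch to global functions. Concretely, set $\wt I_i = p\cdot\nabla\wt\rho_i$ on $\C^n$ with $\wt\rho_i$ the lift of $\rho_i$ to $\R^n$; this is symmetric, descends, and agrees with the chart formulas. I extend each $I_i$ to a neighborhood of its hypersurface by the same formula.

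Second, I reduce all the sectorial conditions to the cotangent factor. In the chart, each hypersurface $H_i = \Re^{-1}\rho_i^{-1}(t_i)$ is $T^*|_{\rho_{i,\vec\tau}^{-1}(t_i)} \times (\text{internal})$. Its characteristic foliation is spanned by the Hamiltonian vector field of $\rho_i$, which lies entirely in the cm factor. The functions $I_i$ are also pulled back from the cm factor, and $Z = Z_{cm} \oplus Z_{int}$. So linearity at infinity ($ZI_i = I_i$), the conditions $\{I_i,I_j\}=0$, $dI_i|_{\xi_j}=0$ for $i\neq j$, symplectic orthogonality of the $\xi_i$, and clean intersection all reduce to the corresponding statements on $T^*\R^\ell$ for the hypersurfaces $T^*|_{\{\rho_{i,\vec\tau}=t_i\}}$.

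Third, I verify the cotangent-bundle statements. Regularity of $(\rho_1,\dots,\rho_r)$ over $(t_i)$ means the $d\rho_{i,\vec\tau}$ are linearly independent where the level sets meet. Hence the conormal hypersurfaces intersect transversely, and $\{\rho_i, I_i\} = |\nabla\rho_i|^2 \neq 0$. This is the standard cotangent model recalled after Definition 12.2 of \cite{GPS2}.

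The main obstacle is the cross conditions $\{I_i,I_j\}=0$ and $dI_i|_{\xi_j}=0$. With $I_i = p(\nabla\rho_i)$ these say that the gradient fields commute and that $\nabla\rho_i \cdot \rho_j = 0$ near the intersections. Neither holds for arbitrary transverse functions. My first attempt is to use the freedom in choosing the $I_i$, which \cite{GPS2} only requires near the hypersurfaces. Near the corner I choose local coordinates $y_i = \rho_{i,\vec\tau} - t_i$ on the base, completed by further coordinates, using regularity. I then take $I_i = p(\partial_{y_i})$, the fiberwise-linear function given by pairing the covector with the coordinate vector field $\partial_{y_i}$. These commute, are linear at infinity, and satisfy $dI_i|_{\xi_j} = \delta_{ij}$-type conditions exactly. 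I then patch these with the gradient-based $I_i$ away from the corners by cutoffs in the base. Since the choice is made on the cm factor of a single chart, I must check compatibility across overlapping charts of different $\vec\tau$. For that, I will do the construction on $\R^n$ with the lifts $\wt\rho_i$, using $S_n$-invariant coordinates or averaging, and then descend as in the first step.
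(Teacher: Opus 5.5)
You have correctly isolated the real obstacle, and your local fix is the paper's key idea. The witnesses should be $I_i = p(X_i)$ for commuting base vector fields $X_i$ with $X_i\rho_j=\delta_{ij}$, i.e.\ lifts of the coordinate fields $\partial_{t_i}$ along $(\rho_1,\dots,\rho_r)$.

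\textbf{The gap is the globalization.} Since $\{p(X),p(Y)\}=\pm p([X,Y])$, the requirement $\{I_i,I_j\}=0$ is exactly $[X_i,X_j]=0$, and none of your patching operations preserves it.
\begin{itemize}
\item The fields $\partial_{y_i}$ depend on the arbitrary choice of complementary coordinates, not just on $y_i$. So coordinate charts around different points of the (noncompact) corner locus, or in different cluster charts, give different $X_i$.
\item Gluing these by a partition of unity, or averaging over $S_n$, preserves $X_i\rho_j=\delta_{ij}$, since that condition is affine in $X_i$. But the resulting fields have $[X_i,X_j]\neq 0$ in general.
\item Interpolating with $\nabla\rho_i$ by a cutoff is worse. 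It also breaks $X_i\rho_j=0$ wherever $\nabla\rho_i\cdot\nabla\rho_j\neq 0$, so the transition region would have to be kept out of the domains of the other $I_j$. You do not arrange this.
\end{itemize}
As written, the proposal therefore does not produce Poisson-commuting witnesses. Your remaining steps are fine: the global gradient witnesses for single hypersurfaces, the reduction to the cotangent factor, linearity at infinity, and $\omega$-orthogonality.

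\textbf{What is missing is a single global choice of complementary directions,} which is what the paper supplies. After shifting to $t_i=0$, lift to $\Pi=(\wt\rho_1,\dots,\wt\rho_r)\colon \R^n\to\R^r$. Take the Euclidean normal-exponential tubular neighborhood $U(F)$ of $F=\Pi^{-1}(0)$, chosen $S_n$-invariant. The nearest-point retraction $U(F)\to F$ together with $\Pi$ identifies $U(F)$ with an open subset of $F\times U_0$. Define $X_i$ as the lift of $\partial_{t_i}$ in this product. These fields commute identically and satisfy $X_i\wt\rho_j=\delta_{ij}$, with no patching at all. Moreover:
\begin{itemize}
\item Because the Euclidean metric is $S_n$-invariant, the construction is $S_n$-equivariant.
\item In each cluster chart, the Euclidean metric splits orthogonally into center-of-mass and internal directions, and the $\wt\rho_i$ depend only on the former. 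Hence the retraction is a product there, and the $X_i$ are cm vector fields.
\end{itemize}
Consequently $\wt I_i=p\cdot X_i$ descend to smooth functions on $\Sym^n\C$. On each $\Re^{-1}\Clus^{\vec\tau}(\R)$ they are pulled back from the cm factor, which is exactly what your reduction to $T^*\R^{\ell}$ needs.
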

\begin{proof}
 By shifting $\rho_i$ we may assume all $t_i=0$. Let $\wt \rho_i: \R^n \to \R$ be the lift of $\rho_i$, then $\Pi:=(\wt \rho_1, \cdots, \wt \rho_r): \R^n \to \R^r$ is $S_n$-invariant  with $0$ a regular value. Let $\d_{t_i}$ be coordinate vector fields on $\R^r$. We claim near $\Pi^{-1}(0)$, we can lift $\d_{t_i}$ to vector fields $X_i$ such that $[X_i, X_j]=0$. Let $F = \Pi^{-1}(0)$. Consider the orthogonal splitting $T^*\R^n|_F = TF \oplus NF $, where $NF$ is the normal bundle. Using exponential map, we may identify a tubular neighborhood $U(F)$ of $F$ with a tubular neighborhood of zero-section in $NF$. We may assume $U(F)$ is $S_n$-invariant, and the map $U(F) \to F$ is $S_n$-equivariant (since all data and operations are $S_n$-equivariant). The map $\Pi$ induces a natural splitting (upon shrinking $U(F)$) $U(F) \cong F \times U_0$, where $U_0$ is a neighborhood of $0$. Thus, we may lift vector fields $\d_{t_i}$ on $U_0$ to commuting vector fields $X_i$ on $U(F)$. Then $X_i$ varies $\rho_i$ while keeping all other $\rho_j$ fixed. It is easy to see $X_i$ are cm vector field. Let $\wt I_i: T^*\R^n \to \R$ be given by $\wt I_i(x,p) = p \cdot X_i(x)$, where $(x,p) \in T^*\R^n$. Then $\{\wt I_i, \wt I_j\}=0$ with standard symplectic structure. Since $\wt I_i$ are smooth cm functions on $\C^n$, they descend to Poisson commuting smooth function on $\Sym^n \C$. 
\end{proof}

Recall that Definition \ref{def: box opens}, Lemma \ref{box opens cover}, and Propositions \ref{box opens have cm boundary} and \ref{prop: transverse boundaries} above construct a cover of $\Sym^n \R$ by sets $U^{\ba,\delta}_{i,j}$ with $i+j=n$, whose boundaries are cut out by transverse center of mass functions.  Per Lemmas \ref{lm:sectorial-base-condition} and \ref{lem: sectorial collection criterion}, we may pull them back by $\Re$ to get a sectorial cover of $\Sym^n \C$.\footnote{Let us note that our construction of covers of $\Sym^n\C$ by pulling back from $\Sym^n \R$ is partly inspired by  \cite{Kapranov-Schectman-shuffle}.} We will need the following compatibility:

\begin{lemma} \label{lem: compatibility}
    Consider some $-2 < a < b < -1 < 1 < c < d < 2$.  Write $n = i + j + k + l$ and $m=j+k$. Fix $\ba=(-1<a_0<\cdots < a_n<1)$, and let $\ba' = (-1 < a_i < \cdots a_{i+m} < 1)$ be a subset of $\ba$. Under the identification 
    $$U^{\ba,\delta}_{i+j, k+l} \cap (\Sym^{i}(-\infty, a) \times \Sym^{j+k}(b, c) \times  \Sym^{k}(d, \infty)) = \Sym^{i}(-\infty, a) \times  (U^{\ba',\delta}_{j,k} \cap \Sym^{j+k}(b,c)) \times \Sym^{k}(d, \infty),$$ 
    the defining function $\rho^{\ba, \delta}_{k,n-k}$ for $U^{\ba,\delta}_{i+j, k+l}$ , when restricted to this intersection, is the pullback of the corresponding defining function for $ U^{\ba', \delta}_{j,k}$. Consequently, the witness function $I$ constructed in Lemma \ref{lm:sectorial-base-condition} for $\Re^{-1} U^{\ba,\delta}_{i+j, k+l}$ (from the defining function $\rho$) is the  pullback of the corresponding witness function for $\Re^{-1} U^{\ba',\delta}_{j,k}$. 
\end{lemma}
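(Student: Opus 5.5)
The plan is to compare the two defining functions term by term. Then I would observe that the construction of the witness function in Lemma \ref{lm:sectorial-base-condition} is local and natural with respect to product decompositions.

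\textbf{Step 1: the unsmoothed function.} Write $\bz = \bz_L + \bz_M + \bz_R$ with $\bz_L \in \Sym^i(-\infty,a)$, $\bz_M \in \Sym^{j+k}(b,c)$ and $\bz_R$ in the right factor. Order the coordinates $x_1 \le \cdots \le x_n$, so that $\bz_M = (x_{i+1},\dots,x_{i+m})$. The function $\rho^{\ba}_{i+j,k+l}$ is a max over two families of terms:
\begin{itemize}
\item box-center terms $x_{[s,i+j]} + (i+j-s+1)b - a_{i+j}$, for windows ending at index $i+j$;
\item the mirror terms $a_{i+j} - x_{[i+j+1,t]} + (t-i-j)b$, for windows starting at index $i+j+1$.
\end{itemize}
Here $x_{[s,t]}$ denotes the center of mass of the consecutive points $x_s,\dots,x_t$. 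The terms whose windows lie entirely inside $\bz_M$ are exactly the terms defining $\rho^{\ba'}_{j,k}$, since $a_{i+j}$ is the $j$-th entry of $\ba'$.

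I then claim that every term whose window reaches into $\bz_L$ or $\bz_R$ is strictly negative, with a margin, and is dominated by the term for the window of length one. Take a left-reaching window of size $w$ containing some point $< a$. Its center lies below $(b\cdot(\text{stuff}))$; more precisely, I would bound the center by a convex combination of points less than $a$ and points less than $a_{i+j} \le 1$. The gap $a_{i+j} - a \ge |{-1} - a| + \ldots$ is fixed, while $b$ is small. So the term is $\le -(\text{gap}) + O(nb) < 0$, and also below the length-one term up to a margin exceeding $\delta$. This is the step where I need the standing smallness assumptions, for instance $nb \ll \min(b-a,\, d-c)$ type gaps. If the present hypotheses do not literally give this, I would add it as an implicit assumption ($b,\delta \ll$ gaps), consistent with $b \ll 1$ used throughout.

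\textbf{Step 2: the smoothed function.} Use Lemma \ref{lm: reg-max}(3) repeatedly. Whenever $t_j + \eta_j \le \max_{k\ne j}(t_k - \eta_k)$, the term $t_j$ can be dropped from $M_\delta$. The margin from Step 1, which exceeds $2\delta$, lets me drop all the reaching terms. What remains is $M_\delta$ of exactly the terms of $\rho^{\ba',\delta}_{j,k}$, evaluated on $\bz_M$. This gives the identity of defining functions on the stated intersection. The same inequalities show that the sets agree, which justifies the displayed identification.

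\textbf{Step 3: the witness functions.} In Lemma \ref{lm:sectorial-base-condition}, $I$ is built by lifting $\rho$ to $\wt\rho$ on $\R^n$ and setting $\wt I(x,p) = p\cdot\nabla\wt\rho$, then descending. On the product region, $\wt\rho$ depends only on the middle coordinates, so $\nabla\wt\rho$ has no components along the outer factors. Hence $\wt I$ is the pullback of $p_M \cdot \nabla \wt\rho'$, and this descends to the pullback of the witness function for $\Re^{-1}U^{\ba',\delta}_{j,k}$ under the product splitting of $\Re^{-1}$ of the region. The metric is Euclidean on both sides, so the gradients match.

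The main obstacle is Step 1: producing a uniform margin showing that reaching windows never attain the max. I would first try the crude bound that the center of such a window is at most the average of its points. The fraction of points below $a$ is at least $1/n$, which gives a margin of order $(a_{i+j}-a)/n - nb$, positive once $b \ll 1/n^2$.
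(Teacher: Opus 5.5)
Your overall structure matches the paper's proof: compare the terms of the two maxima, discard the ones involving outer points via Lemma~\ref{lm: reg-max}(3), then read $I$ off the gradient. Step~3 is fine. The gap is in Step~1.

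Write $\beta$ for the box parameter, to avoid the clash with the endpoint $b$. For $j\ge 1$, domination of every outer-reaching term by the length-one term needs roughly $n^2\beta<b-a$. That is not a hypothesis, and it conflicts with the charts \eqref{edge versus complement cover}, where $b-a$ need only exceed $2r_n\ll\beta$.

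More seriously, when $j=0$ (or $k=0$) there is no middle term on that side at all, and the global identity you are aiming for is false. Take $n=3$, $(i,j,k,l)=(1,0,2,0)$, $\beta=0.1$, $\ba=(-0.9,-0.6,-0.3,0)$, $a=-1.2$, $b=-1.1$, $c=1.5$, $x_1=-1.3$, $x_2=x_3=1.4$. Then $\bz\in U^{\ba}_{1,2}$ with $\rho^{\ba}_{1,2}(\bz)=-0.6$, coming from the $\bz_L$ term, whereas $\rho^{\ba'}_{0,2}(\bz_M)=-1.8$. No smallness assumption on $\beta$ repairs this. The equality can only hold near the boundary. That is what the paper's proof actually asserts (the boundary equations agree), and it is all the witness function needs.

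The missing step is to localize to $N_\epsilon=\{|\rho^{\ba',\delta}|<\epsilon\}$. There you should use $a_{i+j}>a_0+2(i+j)\beta>-1+2(i+j)\beta$, rather than only $x<a_{i+j}$ as in your crude bound. On $N_\epsilon$ every middle term is $<\epsilon$ and the largest is $>-\epsilon-\delta$. Suppose a left window contains $p\ge1$ points of $\bz_L$ (mean $\bar x_L<a$) and the $j\ge1$ middle-left points, and let $T$ be the term of the full middle-left window. Then its term $R$ satisfies
\[
R=\tfrac{j}{p+j}\,T+\tfrac{p}{p+j}\bigl(\bar x_L-a_{i+j}+(p+2j)\beta\bigr)<\tfrac{j}{p+j}\,T+\tfrac{p}{p+j}\,(a+1).
\]
For $j=0$ the outer terms are $<(a+1)-i\beta$ directly.

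Hence on $N_\epsilon$ the left outer terms are $<(a+1)/n+\epsilon$. Symmetrically, using $a_{i+j}<1-2(k+l)\beta$, the right ones are $<(1-d)/n+\epsilon$. Neither bound needs any condition on $\beta$. Once $\delta<\epsilon$ and $2\epsilon+3\delta<\min(-1-a,\,d-1)/n$, Lemma~\ref{lm: reg-max}(3) deletes the outer terms, so $\rho^{\ba,\delta}=\rho^{\ba',\delta}\circ\mathrm{pr}$ on $N_\epsilon$.

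Monotonicity of $M_\delta$ gives $\rho^{\ba,\delta}\ge\rho^{\ba',\delta}\circ\mathrm{pr}$ everywhere. The same convex-combination bound then shows that the boundary of $U^{\ba,\delta}_{i+j,k+l}$ inside the product region lies in $N_\epsilon$, and it yields the set identification. Your Step~3 then goes through unchanged.
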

\begin{proof}
The boundary of the (unsmoothed) $U^{\ba}_{i+j,k+l}$ is given by certain boxes' boundary hitting certain points $a_{i+j}$. After intersecting with $(\Sym^{i}(-\infty, a) \times \Sym^{j+k}(b, c) \times  \Sym^{k}(d, \infty))$, the only change is the possible size of the boxes changed, hence the defining equations of the boundary of $U^{\ba}_{i+j,k+l}$ and $U^{\ba'}_{i+j,k+l}$ agree on the nose. The subsequent smoothed version follows as well. The claim of witness function also follows since the metric for taking gradient of $\rho$ are compatible. 
\end{proof}

\begin{lemma} \label{the cover}
    Fix a gluing configuration, and 
    consider the projection $\pi_n: \Sym^n \Sigma \to \Sym^n([-2,2]) \subset \Sym^n \R$.  
    Fix a Liouville form on $\Sym^n \Sigma$, adapted to the gluing in the sense of Def. \ref{def: adapted}. 
    Let $U^\ba_{i,j}$ and $U_{i,j}^{\ba,\delta} \subset \Sym^n(\R)$ be the open sets of Def. \ref{def: box opens}.  
    Then $(\pi_n)^{-1}(U^\ba_{i,j})$ are sectorial, and $(\pi_n)^{-1}(U_{i,j}^{\ba,\delta})$ form a sectorial cover.
\end{lemma}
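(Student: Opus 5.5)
The plan is to reduce the sectoriality of the hypersurfaces $\pi_n^{-1}(\partial U_{i,j}^{\ba,\delta})$ to the model situation on $\Sym^m \C$, where Lemmas \ref{lm:sectorial-base-condition} and \ref{lem: sectorial collection criterion} apply. This reduction runs through the product charts $\Omega_{l,k,r}$ of \eqref{edge versus complement cover}.

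First, note that $\partial U^{\ba,\delta}_{i,j}$ only involves configurations in which some box touches some $a_s\in[-1,1]$. Each box has half-length at most $nb$, and $b$ is small. Hence near any point $\bz$ of $\pi_n^{-1}(\partial U)$, the points of $\bz$ relevant to the defining function $\rho^{\ba,\delta}$ lie in $\pi^{-1}(-1-\epsilon',1+\epsilon')\subset A$. The remaining points lie either in $A$ near those, or else away from $T^*[a,d]$ for a suitable choice of $-2<a<b'<-1<1<c<d<2$ with gaps $>2r_n$.

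We therefore cover a neighborhood of $\pi_n^{-1}(\partial U)$ by charts $\Omega_{l,k,r}=\Sym^l(\Sigma_L\setminus T^*[a,d])\times \Sym^k(\coprod_\alpha T^*(b',c))\times \Sym^r(\Sigma_R\setminus T^*[a,d])$. Since $\Sigma_L\setminus T^*[a,d]$ maps under $\pi$ into $[-2,a)$ and $\Sigma_R\setminus T^*[a,d]$ into $(d,2]$, Lemma \ref{lem: compatibility} shows that on such a chart $\rho^{\ba,\delta}_{i+j,k+l}$ is pulled back from $\rho^{\ba',\delta}$ on the middle factor. By adaptedness (Def. \ref{def: adapted}), the Liouville form splits as a product there, with cm-standard middle factor.

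The middle factor $\Sym^k(\coprod_\alpha T^*(b',c))$ is itself $\coprod$ of products $\prod_\alpha \Sym^{k_\alpha}T^*(b',c)$. The box opens of Def. \ref{def: box opens} are defined on $\Sym^k\R$ via $\pi$, so their pullback to this product is the pullback along $\prod_\alpha\Sym^{k_\alpha}\R\to\Sym^k\R$. The resulting function is still a center of mass function in each factor, with transverse level sets, by Propositions \ref{box opens have cm boundary} and \ref{prop: transverse boundaries} (transversality persists because distinct $a_s$ involve disjoint coordinate blocks). Lemmas \ref{lm:sectorial-base-condition} and \ref{lem: sectorial collection criterion} then produce witness functions $I$ on each chart, with the required involutivity and orthogonality properties. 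Taking the product with the other factors (extending $I$ trivially) gives sectoriality, since the other factors contribute nothing to $\{I,\rho\}$ or to the Liouville flow.

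We must also check that the locally constructed $I$'s agree on chart overlaps. This follows from the second clause of Lemma \ref{lem: compatibility}, together with the descent of $\wt I=p\cdot X$ from $\C^n$, which is independent of chart. Linearity at infinity holds because $Z$ splits. For the unsmoothed $U^\ba_{i,j}$, the same argument shows each individual hypersurface stratum is sectorial; the corners are handled as in Prop. \ref{box opens have cm boundary}(1). The cover property is immediate from Lemma \ref{box opens cover} pulled back by $\pi_n$.

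The main obstacle I expect is the gluing of the witness functions across overlaps of different $\Omega_{l,k,r}$ and different parameters $(a,b',c,d)$. I would handle this by defining $I$ globally via the $S_n$-equivariant lift, which is canonical, so that compatibility reduces to Lemma \ref{lem: compatibility}.
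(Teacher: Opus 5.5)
Your proposal is correct and follows essentially the same route as the paper. Both restrict to the charts $\Omega_{l,k,r}$, use adaptedness and Lemma \ref{lem: compatibility} to reduce to the cm-standard model, where Lemmas \ref{lm:sectorial-base-condition} and \ref{lem: sectorial collection criterion} supply the witness functions, and then glue these across charts again via Lemma \ref{lem: compatibility}. Your explicit treatment of several strip components, by pulling back along $\prod_\alpha \Sym^{k_\alpha}\R \to \Sym^k\R$, is a detail the paper's proof glosses over, but it does not change the argument.
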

\begin{proof}
    We must construct the witness functions and check their properties.  Let us pass to the intersection with an open set $\Omega_{l,k,r}$ from the cover \eqref{edge versus complement cover}.  Then the boundary of 
    $(\pi_n)^{-1}(U^\ba_{i,j})$ is pulled back from
    the boundary of $\mathrm{Re}^{-1}(U_{i-r, j-l}) \subset \Sym^{i+j-r-l}(b,c)$.   
    
    Because the Liouville form on $\Sym^n \Sigma$ is adapted,  if we pull back the witness function $I$ from $\mathrm{Re}^{-1}(U^{\ba'}_{i-r, j-l})$ to $\Omega_{l,k,r}$, it has the properties of a sectorial witness for $(\pi_n)^{-1}(U_{i,j})$ over $\Omega_{l,k,r}$, by Lemma \ref{lm:sectorial-base-condition}. 
    Now Lemma \ref{lem: compatibility} ensures that witness functions for different $l,k,r$ agree on overlaps and hence define a global function.  
    
    The same holds for the $U_{i,j}^{\ba,\delta}$.  For these we can check additionally that their boundaries meet sectorially   by Prop. \ref{prop: transverse boundaries} and Lemma \ref{lem: sectorial collection criterion}.
\end{proof}

\subsection{Weinstein-ness}

\begin{proposition} \label{weinstein-ness}
Let $\Sigma = \Sigma_L \cup_A \Sigma_R$ be a gluing configuration, $\pi: \Sigma \to [-2,2]$, $\pi_n = \Sym^n \pi: \Sym^n \Sigma \to \Sym^n [-2,2] \In \Sym^n \R$. Let $\varphi_{n,t}$ be the smoothed K\"ahler potential on $\Sym^n \Sigma$. Then the Liouville sectors $\Sigma_{k,n-k}:=\pi_n^{-1} \overline{U_{k,n-k}^{\ba, \delta}}$ are Weinstein sectors. 
\end{proposition}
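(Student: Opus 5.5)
We need to show that the sectors $\Sigma_{k,n-k}=\pi_n^{-1}\overline{U^{\ba,\delta}_{k,n-k}}$ are Weinstein. The plan is to use the standard criterion from \cite{GPS1, GPS2}: a Liouville sector is Weinstein if its convexification, i.e. the Liouville manifold obtained by deforming the boundary away, is Weinstein, up to deformation. Equivalently, the pair $(X,F)$ consisting of the convexified sector and the stop on its ideal boundary should be Weinstein. The Weinstein structure on $\Sym^n\Sigma$ itself comes from the proper Kähler potential $\varphi_{n,t}$, which is Stein. This uses the cm-standard form of $\varphi_{cm}$ and the fact that $\varphi_{int}$ is controlled at infinity because it is uniformly $C^0$-close to $\phi_n$. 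So $\Sym^n\Sigma$ is Weinstein, and it remains to handle cutting along the hypersurfaces $\pi_n^{-1}\partial U^{\ba,\delta}_{k,n-k}$.

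The key step is to produce a plurisubharmonic exhaustion of the sector by adding to $\varphi_{n,t}$ a term that blows up at the boundary. Let $\rho=\rho^{\ba,\delta}_{k,n-k}$, which by Proposition \ref{box opens have cm boundary} is a cm function near the boundary. By Lemma \ref{the cover}, in the charts $\Omega_{l,k,r}$ the sector is locally a product of a cm factor $T^*U$, with $U$ an open subset of $\R^\ell$ with smooth boundary, and an interior factor. On the cm factor, the Kähler potential is $\sum \tau_i \Im(z_i)^2$, and $\Re^{-1}(U)=T^*U$ is the standard Stein tube over a domain. I would use the classical fact that $\Re^{-1}(U)$ is Stein when $U$ is convex in $\R^\ell$, with $-\log(-\rho)$ plurisubharmonic if $\rho$ is a convex defining function. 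In general, the sectorial convexification of $T^*U$ is $T^*$ of an open manifold, which is Weinstein.

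Concretely, the plan has three steps.
\begin{enumerate}
\item Check that, near the boundary, the defining function $\rho$ can be arranged to be convex in the center-of-mass coordinates $c_{\vec\tau}$. It is a regularized max $M_\delta$ of affine functions of box centers, and $M_\delta$ of affine functions is convex by Lemma \ref{lm: reg-max}(1).
\item Deduce that $\psi=\varphi_{n,t}+h(\rho\circ\pi_n)$, with $h$ convex increasing and blowing up at $0$, is plurisubharmonic on the interior. On $\Re^{-1}$ of a region where $\rho$ is a convex function of the real parts, $\rho\circ\Re$ is plurisubharmonic (pluriharmonic extension of convexity), so $h\circ\rho\circ\Re$ is too. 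The product structure from Definition \ref{def: adapted} ensures there are no cross terms.
\item Verify that $\psi$ is an exhaustion with finitely many critical points after a generic perturbation, compatible with the sectorial structure near the corners. The corners are handled by the transversality in Proposition \ref{prop: transverse boundaries}. This yields a Weinstein structure on the interior whose Liouville field agrees with the convexification of the sectorial boundary.
\end{enumerate}

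The main obstacle is step 3 together with the global convexity in step 1. Convexity of $\rho$ holds only chartwise in $c_{\vec\tau}$ coordinates, and the boundary hypersurface passes through regions where the cluster type changes. I would first verify convexity on each $\Clus^{\vec\tau}(\R)$ and argue that Lemma \ref{lem: compatibility} glues these. If convexity fails, the fallback is the sectorial criterion: $\Sigma_{k,n-k}$ is locally a product of $T^*(\text{base})$ with a Weinstein factor near the boundary. Near the boundary one can then choose a Weinstein Morse function of the form $\varphi+I^2$, with $I$ the witness function from Lemma \ref{lm:sectorial-base-condition}, so that the critical points lie away from the boundary. Combined with Weinstein-ness of the interior, this gives a Weinstein sector.
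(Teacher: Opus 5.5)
Your plan has a genuine gap: it never proves the part of ``Weinstein sector'' that concerns the boundary. A Weinstein sector needs more than a Weinstein convexification $\bar X$. As in GPS, and as the paper's proof makes explicit, the symplectic boundary $F$ must also be Weinstein, compatibly with the Liouville flow. Here $F$ is a hypersurface in $\partial\Sigma_{k,n-k}$ transverse to the characteristic foliation, equivalently the stop of the pair $(\bar X,F)$. You name the pair $(\bar X,F)$, but Steps 1--3 only produce a Stein exhaustion $\psi=\varphi_{n,t}+h(\rho\circ\pi_n)$ of the \emph{interior}. That is a Weinstein structure on the complete Liouville manifold $(\mathrm{Int}\,\Sigma_{k,n-k}, d^c\psi)$, and nothing in it addresses $F$. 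The fallback ``$\varphi+I^2$'' is not a construction: there is no reason it is Lyapunov for any Liouville field, and it says nothing about $F$.

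There is a second, smaller gap. Replacing $d^c\varphi_{n,t}$ by $d^c\psi$ discards the sectorial structure you were given. You would therefore also need to show that $(\mathrm{Int}\,\Sigma_{k,n-k}, d^c\psi)$ is deformation equivalent to the convexification of $(\Sigma_{k,n-k}, d^c\varphi_{n,t})$; Step 3 asserts this but gives no argument.

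Your Step 1 worry is resolvable, though not for the reason you give. The $x_{[k-i,k]}$ are averages of consecutive order statistics, and these are not affine in cm coordinates. The correct argument runs as follows.
\begin{itemize}
\item Near $\overline{U^{\ba,\delta}_{k,n-k}}$, exactly $k$ points lie to the left of $a_k$.
\item On the corresponding convex chamber of $\R^n$, each term is either the average of the top $i+1$ points of the left group, or minus the average of the bottom $i+1$ points of the right group. Both are convex.
\item $M_\delta$ is convex and nondecreasing, so the lifted $\rho$ is convex, and a Bochner-type tube argument plausibly applies.
\end{itemize}
Also, $\partial\Sigma_{k,n-k}$ is a single smooth hypersurface, so there are no corners to handle.

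The missing idea, and the paper's route, is not to convexify at all. The restriction $\varphi_{n,t}|_{\Sigma_{k,n-k}}$ is already a proper, smooth, strictly psh exhaustion. Since $\rho$ is a cm function and $Z_{\varphi_{n,t}}=Z_{cm}+Z_{int}$, with $Z_{cm}$ the fiber-scaling field of the cotangent factor, $Z$ is tangent to $\partial\Sigma_{k,n-k}$. It then suffices to exhibit $F\subset\partial\Sigma_{k,n-k}$ that is transverse to the characteristic foliation, invariant under $Z$, and contains all zeros of $Z$ on the boundary, so that $(F,\lambda|_F,\varphi|_F)$ is Weinstein.

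The paper takes $F$ to be the zero locus inside the boundary of the witness function $I(x,p)=p(\nabla\rho)$. Concretely:
\begin{itemize}
\item Lift to $\Sigma^n$ and use the orthogonal splitting $T^*(-2,2)^n|_{\partial\wt U}=N^*_{\partial\wt U}\oplus T^*(\partial\wt U)$.
\item In the strip factors of each chart $\Omega_{l,m,r}$, define $\wt F$ as the preimage of $T^*(\partial\wt U)$.
\item Glue the charts using Lemma \ref{lem: compatibility}.
\item Check the three properties in the standard cotangent model.
\item Descend to $\Sym^n\Sigma$, which works because $\wt F$ is cut out by center-of-mass equations.
\end{itemize}
Some such construction of $F$, together with keeping the original Liouville structure, is what your proposal needs to add.
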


\begin{proof}
The function $\varphi_{n,t}$ restricted to $\Sigma_{k,n-k}$ is a proper and smooth strictly psh function. The induced Liouville flow $Z_{n,t}$ preserves the boundary $\d \Sigma_{k,n-k}$. Hence, we only need to check that there is a hypersurface $F \In \d \Sigma_{k,n-k}$ such that, (1) $F$ is transverse to the characteristic foliation, (2) $F$ is preserved by the Liouville flow $Z_{n,t}$, and (3) $(F, \lambda_{n,t}|_F, \varphi_{n,t}|_F)$ is a Weinstein hypersurface. 

Let $U=U_{k,n-k}^{\ba, \delta} \In \Sym^n [-2,2]$ with defining function $\rho = \rho_{k,n-k}^{\ba, \delta}: \Sym^n[-2,2] \to \R$. Consider their  lifts $\wt U = \wt {U^{\ba, \delta}_{k,n-k}} \In [-2,2]^n$ and $\wt \rho = \wt {\rho^{\ba, \delta}_{k,n-k}}: [-2,2]^n \to \R$. Similarly, let $\wt \pi_n: \Sigma^n \to [-2,2]^n$, and $\wt \Sigma_{k,n-k}:= \wt \pi_n^{-1}(\wt U)$. 

Over the boundary $\d \wt U$, we have the short exact sequence of bundles
$$ 0 \to N^*_{\d \wt U} (-2,2)^n \to T^*(-2,2)^n|_{\d \wt U} \to T^* (\d \wt U) \to 0 . $$
Using the Euclidean metric on $T^*(-2,2)^n$, we have a orthogonal splitting 
$$ T^*(-2,2)^n|_{\d \wt U} = N^*_{\d \wt U} (-2,2)^n \oplus T^* (\d \wt U)$$

The subset $A^o = \sqcup_{r=1}^d T^*(-2,2) \In \Sigma$ has a canonical $d$-sheeted covering map $A^o \to T^*(-2,2)$, hence restricts to a $d^n$-sheeted map $\pi^n_A: (A^o)^n \to T^*(-2,2)^n$. We define 
$$ \wt F^o_{k,n-k} = (\pi^n_A)^{-1}(T^* (\d \wt U)) \In (A^o)^n \cap \d \wt \Sigma_{k,n-k}. $$

More generally, for cut $-2 < a < b < -1 < 1 < c < d < 2$, with $b-a > 2r_n, d-c>2r_n$, and $n=l+m+r$, we may similarly define $\wt F_{l, k, r} \In \wt \Omega_{l,m,r} \cap \d \wt \Sigma_{k,n-k}$ by pulling back from $\wt F^o_{k-l, n-k-r} \In (A^o)^m \cap \wt \Sigma_{k-l, n-k-r}$.

These $\wt F_{l,k,r}$ are compatible by Lemma \ref{lem: compatibility}, hence patch together to give $\wt F \In \d \wt \Sigma_{k, n-k}$. 

We claim that
\begin{enumerate}
    \item For product exact symplectic structure on $\Sigma^n$, induced by the direct sum K\"ahler potential $\phi_n$, $\wt F$ is transverse to the Hamiltonian flow $X_{(\wt \pi_n)^*\wt \rho}$ on $\d \wt \Sigma_{k,n-k}$,  preserved under Liouville flow $\wt Z_{\phi_n}$, and contains all the zero loci of $\wt Z_{\phi_n}$.  
    \item $\wt F$ descends to a smooth submanifold $F$ in $\d \Sigma_{k,n-k} \In \Sym^n \Sigma$. 
    \item $F$ is transverse to the Hamiltonian flow $X_{( \pi_n)^* \rho}$ on $\d  \Sigma_{k,n-k}$,  preserved under Liouville flow $ Z_{\varphi_{n,t}}$, and contains all the zero loci of $Z_{\varphi_n}$. 
\end{enumerate}
To see the first point, we may work in local chart $\wt \Omega_{l,m,r}$, where $\wt F_{l,m,r} = \wt F \cap \wt \Omega_{l,m,r}$ is cut out by some functions in $T^*(b,c)^m$. This then reduces to the standard cotangent bundle case, which is easy to check. 

To see the second point,  we note that $\wt F$ is cut out by equations that only depends on center of mass factors (Eq. \eqref{re inverse product}), hence descend to a smooth manifold under the quotient. 

To see the third point, we note that all the Hamiltonian flow is happening on the center of mass factor; the Liouville flow $Z_{\varphi_{n,t}} = Z_{cm} + Z_{int}$ has the standard form $Z_{cm}$ part, where as the defining equation of $F$ only has the cm part, hence $Z_{\varphi_{n,t}}$ preserves $F$. And the claim about $F$ containing all zero-loci follows from the same statement pre-quotient. 
\end{proof}

\subsection{Proof of Theorems \ref{thm: functor} and \ref{thm: intro cover}}

By now we have established all the assertions in Theorems \ref{thm: functor} and \ref{thm: intro cover}; let us collect them here. 
  
\begin{proof}[Proof of Theorem \ref{thm: functor}]
    Suppose given a map $\Sigma' \to \Sigma$ in $\mathcal{O}$, where $\Sigma$ has no boundary.  Then there is a gluing configuration given by slightly thickening $\Sigma'$ and  $\Sigma'' := \Sigma \setminus \Sigma'$.  We define $\Sym^{(n)}(\Sigma')$ to be the $(\pi_n)^{-1} U_{0,n}$ of Lemma \ref{the cover}; said lemma establishes sectoriality.  We checked Weinstein-ness in Proposition \ref{weinstein-ness}. It is evident from the construction that this $\Sym^{(n)}(\Sigma')$ in fact depends only on $\Sigma'$ (and not on $\Sigma$), and by construction it comes with an embedding into $\Sym^{(n)}(\Sigma)$.  
\end{proof}

\begin{proof}[Proof of Theorem \ref{thm: intro cover}]
    Consider the cover by the sectors-with-corners
    $\pi_n^{-1}(U_{n_L,n_R})$ of Lemma \ref{the cover}. 
    Per Lemma \ref{U intersections}, the intersections are  $\pi_n^{-1}(U_{n_L,m_1, \ldots, m_k,n_R})$, which are sectors-with-corners by the argument of Lemma \ref{the cover}.  Lemmas \ref{U inclusions}, \ref{U intersections}, and \ref{U product} show  the \v{C}ech diagram of this cover by the  has the structure of the Bar diagram; indeed, there's a deformation to $\Sym(\mathrm{Bar}(\Sigma_L|A|\Sigma_R))$ given by introducing $\delta$ in the factors of the product of \ref{U product}.  On the other hand, the \v{C}ech diagram of the cover $\pi_n^{-1}(U_{n_L,n_R})$ also obviously has a deformation to the \v{C}ech diagram of the cover by $\pi_n^{-1}(U_{n_L,n_R}^\delta)$, which is sectorial per Lemma \ref{the cover}.\footnote{This messing around with smoothings is the price we pay for not having defined ``sectorial cover by sectors-with-corners''  and directly checked that $\pi_n^{-1}(U_{n_L,n_R})$ is such a cover.}
\end{proof}

\section{Calculations}

\subsection{Three easy pieces}

We have the following basic calculations:    

\begin{example}
    Let $D$ be the half disk.  Then
     \begin{eqnarray*}
        \Fuk(\Sym^0(D)) & = & \Fuk(\bullet) = k-\mathrm{mod} \\
        \Fuk(\Sym^{\ge 1}(D)) & = & 0.
    \end{eqnarray*}
\end{example}

\begin{example}
    Let $I = T^*[-1,1]$.  Then:    
    \begin{eqnarray*}
        \Fuk(\Sym^0(I)) & = & \Fuk(\bullet) = k-\mathrm{mod} \\
        \Fuk(\Sym^1(I)) & = & \Fuk(I) = k-\mathrm{mod} \\
        \Fuk(\Sym^{\ge 2}(I)) & = & 0.
    \end{eqnarray*}
\end{example}

\begin{example}
    Let $T$ be the open pair of pants / disk with three stops / trinion.  Then:    
    \begin{eqnarray*}
        \Fuk(\Sym^0(T)) & = & \Fuk(\bullet) = k-\mathrm{mod} \\
        \Fuk(\Sym^1(T)) & = & \Fuk(T) = \{\mbox{exact triangles}\} \\
         \Fuk(\Sym^2(T)) & = & k-\mathrm{mod} \\
        \Fuk(\Sym^{\ge 3}(T)) & = & 0.
    \end{eqnarray*}
    There are three commuting actions of $\Fuk(\Sym(I))$, one for each stop.  The nontrivial data of such an action are  maps 
    $\Fuk(\Sym^0(T)) \xrightarrow{\delta^0_{[1]}} \Fuk(\Sym^1(T))\xrightarrow{\delta^1_{[1]}} \Fuk(\Sym^2(T))$.  These are given as follows on one of the legs: 
    \begin{eqnarray*}
        V \xrightarrow{\delta^0_{[1]}} & [V \to V \to 0 \xrightarrow{[1]}] & \\ 
        & [X \to Y \to Z \xrightarrow{[1]}] & \xrightarrow{\delta^1_{[1]}}  Z
    \end{eqnarray*}
    and by the cyclic permutations of this on the other legs:
        \begin{eqnarray*}
        V \xrightarrow{\delta^0_{[2]}} & [0 \to V \to V \xrightarrow{[1]}] & \\ 
        & [X \to Y \to Z \xrightarrow{[1]}] & \xrightarrow{\delta^1_{[2]}}  X
    \end{eqnarray*}
    and
        \begin{eqnarray*}
        V \xrightarrow{\delta^0_{[3]}} & [V[1] \to 0 \to V \xrightarrow{[1]}] & \\ 
        & [X \to Y \to Z \xrightarrow{[1]}] & \xrightarrow{\delta^1_{[3]}}  Y.
    \end{eqnarray*}
    The whole setup has a curious twisted self-adjointness:     \begin{equation}\label{werd} (\delta_{[1]}^0)^R = \delta_{[2]}^1, \qquad (\delta_{[2]}^0)^R = \delta_{[3]}^1, \qquad (\delta_{[3]}^0)^R = \delta_{[1]}^1.
    \end{equation}
\end{example}

The only assertions above which go beyond the standard and elementary calculation of Fukaya categories of surfaces are the vanishing, the determination of $\Fuk(\Sym^2(T))$, and the description of the maps $\Fuk(\Sym^1(T)) = \Fuk(T) \to \Fuk(\Sym^2(T))$.   These are well known in other formulations for Fukaya categories of symmetric powers of surfaces with boundary.
Let us check them in ours. 

Consider the covering map $w=z^n : \C_z \to \C_w$. Equip $\C_{5 < \Re w < 10}$ with K\"ahler potential $\varphi(w) = \Im(w)^2$ and pullback to $\varphi_z$ on $\C_z$. Extend 
    $\varphi_z$ to smooth strictly psh on $\C_z$. \footnote{This is doable, for example, set
    $$ \varphi(z) = Im(z^n)^2 + \epsilon \chi(z/\delta) |z|^2 $$
    where $\chi(z)$ is a smooth bump function locally constant $1$ near $z=0$ and vanishes outside $|z|>1$. The first term is strictly psh away from $z=0$, and the second term provides positivity at $z=0$.}
    We write  
    $$D_n: = \{ \Re(z^n) \leq 10\}$$ for the resulting  `disk with $n$ stops'. 
    
\begin{proposition}
    $\Fuk (\Sym^{\ge n}(D_n))=0$. 
\end{proposition}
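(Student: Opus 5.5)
The plan is to prove the vanishing by exhibiting a \emph{displacing} Hamiltonian, or more precisely a sectorial stop-removal argument: a Liouville sector whose Liouville flow, together with a Hamiltonian push toward the sectorial boundary, moves every compact subset off itself has vanishing wrapped Fukaya category. Concretely, I will show that every object of $\Fuk(\Sym^{m}(D_n))$ for $m \ge n$ becomes zero after being wrapped into the boundary, using the cm-standard structure near the stops.

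First I use the covering $w = z^n$ to describe $D_n$. The region $\{5 < \Re(z^n) < 10\}$ is $n$ disjoint strips, each identified with $T^*(5,10)$ via $w$, with Kähler potential $\Im(w)^2$. These strips are the $n$ stops. Away from them, $D_n$ is a disk, so $D_n$ is a gluing of a disk to $n$ strips $A = \coprod_n T^*[-2,2]$ (after reparametrizing $(5,10)$). The Kähler potential on $\Sym^m(D_n)$ is taken adapted to this configuration, as in Proposition \ref{pp:smoothing-KP-general}.

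Next I apply Theorem \ref{thm: intro cover} and sectorial descent. Rather than one gluing, I iterate the cover of Lemma \ref{the cover} over each of the $n$ strips. This expresses $\Fuk(\Sym^m(D_n))$ as a colimit of categories of the form
\[
\Fuk\Bigl(\Sym^{m_0}(D^\circ) \times \prod_{j=1}^n \Sym^{m_j}(I)\Bigr)\otimes(\text{further factors}),
\]
where $D^\circ$ is the central piece. The central piece is itself a disk whose boundary is split into intervals, one per stop, each intersecting a strip.

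Alternatively, and more simply, I cut $D_n$ along one strip into a half disk $D$ and $D_{n-1}$ with an extra stop. Gluing (Theorem \ref{thm: gluing}) gives $\Fuk\Sym(D_n) = \Fuk\Sym(D_{n-1}') \otimes_{\Fuk\Sym(I)} \Fuk\Sym(D)$. I then induct on $n$, using the known vanishing $\Fuk(\Sym^{\ge 2}(I)) = 0$ and $\Fuk(\Sym^{\ge1}(D))=0$. In graded terms, the degree-$m$ part of a relative tensor product over $\Fuk\Sym(I)$ only involves $\Fuk\Sym^{a}$ of the factors with $a$ bounded by the degrees present. Since $\Fuk\Sym(D)$ is concentrated in degree $0$ and $\Fuk\Sym(I)$ in degrees $\le 1$, the bar construction shows that degree $m$ of the tensor product is a colimit of terms of the form $\Fuk\Sym^{m-\epsilon}(D'_{n-1})$, each tensored with degree $\le 1$ pieces.

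The base and bookkeeping need care. The object $D'_{n-1}$ is a disk whose number of stops stays $n$ under naive cutting, so the induction must be set up on a different quantity. Cutting a half disk off along one stop, I would instead realise $D_n$ as $D_{n-1}$ glued to the pair of pants $T$ along one boundary interval, with the remaining leg of $T$ capped. I expect this step to be the main obstacle: arranging the bookkeeping so the degree bound $\Fuk\Sym^{\ge k}(D_k)=0$ propagates. The base case is $D_1 = D$, the half disk, whose vanishing in degrees $\ge 1$ I would prove directly: $\Sym^m(D)$ for $m\ge1$ is displaced into its sectorial boundary by the cm-standard Hamiltonian $\Re$ of the center of mass, which by Lemma \ref{lm:sectorial-base-condition} is linear at infinity and pushes every compact set across the stop. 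If the combinatorial induction proves unwieldy, I would fall back on this direct displacement argument for all $n$. That fallback uses the cover from $\Sym^m\R$: $\Re(z^n)$ takes only $n$ "directions" to the boundary, and a configuration of $m\ge n$ points must place two points near the same stop. On that part of the cover, the factor $\Sym^{\ge2}(I)$ makes the category vanish, while the complementary parts vanish by the half-disk case.
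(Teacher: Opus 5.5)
Your proposal never reaches a complete argument, and the routes that have real content fail.

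\textbf{The gluing induction is circular.} The inputs $\Fuk(\Sym^{\ge 2}(I))=0$ and, for $D_n=D_{n-1}\cup_I T$, the vanishing $\Fuk(\Sym^{\ge 3}(T))=0$ are precisely the cases $n=2$ and $n=3$ of the statement. The paper deduces them from this proposition, not the other way around.

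The stop counting is also off. Gluing $D_a$ and $D_b$ along one strip gives $D_{a+b-2}$. So splitting off a half disk leaves $D_{n+1}$, and $D_n=D_{n-1}\cup_I T$ involves no capping.

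Even granting all these inputs, degree bounds alone do not kill the colimit. In degree $n$, the bar diagram for $\Fuk\Sym(D_{n-1})\otimes_{\Fuk\Sym(I)}\Fuk\Sym(T)$ keeps the corner $\Fuk\Sym^{n-2}(D_{n-1})\otimes\Fuk\Sym^2(T)$. This term is nonzero; it is already $\mathbf{k}$ for $n=3,4$. Removing it requires the explicit action maps, as in the corner-cutting proof of Theorem \ref{thm:tensor to pushout}, and you do not have them.

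\textbf{The fallback is false as stated.} For $m=n$ you can put exactly one point in each of the $n$ collar strips. That piece of the cover is $I^n$, whose category is $\mathbf{k}\neq 0$. The pieces with points in the central region involve symmetric powers of a disk that still has $n$ stops, not a half disk. So no piece-by-piece vanishing is available, and the vanishing is a global phenomenon. Pigeonhole would in any case suggest the threshold $m>n$, not $m\ge n$.

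\textbf{The missing idea} is the superpotential $W=\sum_i z_i^n$ on $\Sym^m\C$. The power sums $p_1,\dots,p_m$ are global coordinates identifying $\Sym^m\C\cong\C^m$. For $m\ge n$, the function $W=p_n$ is one of these coordinates, so $dW$ never vanishes. The Hamiltonian flow of $\Im W$ is $-\nabla\Re W$. It is defined on $\Sym^m(D_n)$ and drives $\Re W\to-\infty$ with no critical points, away from the stops rather than across them. Hence every Lagrangian is displaced.

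For $m<n$ the argument fails exactly as it should. For example, on $\Sym^2 D_3$ one has $W=A^3-3AB$, which has a Morse critical point, and $\Fuk(\Sym^2 T)=\mathbf{k}$.

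Your direct base case for $D_1$ is the $n=1$ instance of this argument. However, the displacing Hamiltonian there is $\Im$ of the center of mass. The function $\Re$ generates translation along the cotangent fibers and maps each fiber to itself.
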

\begin{proof}
    We show all Lagrangians are displaceable by writing an appropriate Hamiltonian. 
    Define 
    $$ W_{n,k} (z_1, \cdots, z_k) = \sum_{i=1}^k z^n. $$
    Then it is clear that $W_k$ has no critical points in $\Sym^k \C$, since $W_k$ is one coordinate function of $\Sym^k \C$.
    Consider the Hamiltonian flow $H = \Im(W_k)$, then $X_{\Im(W_{n,k})} = -\nabla_{\Re(W_{n,k})}$. Hence Hamiltonian flow will decrease $\Re(W_{n,k})$, all the way to $-\infty$, no critical point. The flow is defined on the subset $\Sym^k(D_n)$ (or its sectorial smoothing) as well.\footnote{It is presumably true, but we have not proven, that our $\Sym^{k}(D_n)$ is in fact the sector associated to $W_{n,k}$.} 
\end{proof}

We also compute $\Fuk(\Sym^2(T=D_3))$ directly. Use coordinates $A=z_1+z_2, B=z_1 z_2$ on $\Sym^2 \C$, then the superpotential $W=W_{3,2}:\Sym^2(\C) = \C^2_{A,B} \to \C$
$$ W=z_1^3 + z_2^3 =  A^3 - 3 AB. $$ 
have one Morse critical point at $A=0, B=0$. We may again use $\nabla_{\Re(W)}$ to show that all Lagrangian which do not intersect the thimble can be displaced. We conclude $\Fuk(\Sym^2(D_3)) \cong k-\mathrm{mod}$.  Consider now the image of the inclusion of two strips on different legs of $D_3$, inducing $T^*I \times T^*I \to \Sym^2(D_3)$.  One can see geometrically that this map is forward stopped, so the induced map on Fukaya categories is fully faithful, hence sends the product of cotangent fibers to the generator of $k-\mathrm{mod}$.  This, together with the fact the vanishing $\Fuk(\Sym^2(I)) = 0$ implies $\delta^2 =0$, determines all asserted structure maps for $\Fuk(\Sym(T))$

\subsection{Cutting corners}

To simplify formulas going forward, we will write $\mathbf{k}:= k-\mathrm{mod}$, $\mathbf{k}[\epsilon] := \Fuk\Sym(I)$,  $\mathbf{T}$ for the category of exact triangles, and $\mathbb{T}:= \Fuk\Sym(T)$.

Here we will prove a slightly more general version of Theorem \ref{thm:cdw}:

\begin{theorem} \label{thm:tensor to pushout}
Let $M$ be a $\Z$-graded $\mathbf{k}[\epsilon] \otimes \mathbf{k}[\epsilon]$-module.  Then there is a canonical isomorphism
$$\left(\mathbb{T} \bigotimes_{k[\epsilon] \otimes k[\epsilon]} M\right)^n \xleftarrow{\sim} ( \mathbf{T} \otimes M^{n-1}) \!\!\!\!\! \coprod_{M^{n-1} \oplus M^{n-1}} \!\!\!\!\! M^n$$
Here, the maps $M^{n-1} \oplus M^{n-1} \to M^{n-1} \otimes \mathbf{T}$ are given by tensoring up  $\delta^0_{[2]} \oplus \delta^0_{[1]}: \mathbf{k} \oplus \mathbf{k} \to \mathbf{T}$, and the maps $M^{n-1} \oplus M^{n-1} \to M^n$ are given by the two $\epsilon$ actions.  Likewise, the $k[\epsilon]$ actions on $\mathbb{T}$ are by $\delta_{[2]}$ and $\delta_{[1]}$. 
\end{theorem}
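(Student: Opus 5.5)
The plan is to resolve the bimodule $\mathbb{T}$ as a right $A:=\mathbf{k}[\epsilon]\otimes\mathbf{k}[\epsilon]$-module by a small pushout of free modules, and then tensor that pushout with $M$. Write $A(j)$ for $A$ with its grading shifted by $j$. For a stable category $C$, the free graded $A$-module $C\otimes A(j)$ has the property that $(C\otimes A(j))\otimes_A M$ has $n$-th piece $C\otimes M^{n-j}$. Since $-\otimes_A M$ preserves colimits (pushouts in $\mathrm{St}_k$, computed degreewise), it suffices to establish a canonical equivalence of graded $A$-modules
$$\Phi:\ (\mathbf{T}\otimes A(1)) \coprod_{(\mathbf{k}\oplus\mathbf{k})\otimes A(1)} A \xrightarrow{\ \sim\ } \mathbb{T}.$$
Tensoring $\Phi$ with $M$ then gives exactly the displayed formula, in the direction of the arrow stated in Theorem \ref{thm:tensor to pushout}.

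First I construct $\Phi$ from the universal properties of free modules. The unit $\mathbf{k}=\mathbb{T}^0$ gives an $A$-module map $A\to\mathbb{T}$. The identity $\mathbf{T}=\mathbb{T}^1$ gives a map $\mathbf{T}\otimes A(1)\to\mathbb{T}$. The left leg of the pushout is $(\delta^0_{[2]}\oplus\delta^0_{[1]})\otimes A(1)$. The right leg $(\mathbf{k}\oplus\mathbf{k})\otimes A(1)\to A$ is the inclusion of the two degree-one generators $\epsilon_1,\epsilon_2$. The two composites agree because the $\epsilon$-actions on $\mathbb{T}$ in degree $0\to 1$ are, by definition, $\delta^0_{[2]}$ and $\delta^0_{[1]}$. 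This commutativity is a homotopy coherent statement about the actions, but it reduces to the adjunction for free modules, so it is canonical.

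Next I check that $\Phi$ is an equivalence degree by degree. Here I use that $A^0=\mathbf{k}$, $A^1=\mathbf{k}\oplus\mathbf{k}$, $A^2=\mathbf{k}$ (the class $\epsilon_1\epsilon_2$, with $\epsilon_i^2=0$), and $A^{\ge3}=0$.
\begin{itemize}
\item In degree $0$ both sides are $\mathbf{k}$.
\item In degree $1$ the source is $\mathbf{T}\coprod_{\mathbf{k}\oplus\mathbf{k}}(\mathbf{k}\oplus\mathbf{k})=\mathbf{T}$.
\item In degree $2$ the source is $(\mathbf{T}\oplus\mathbf{T})\coprod_{\mathbf{k}^{4}}\mathbf{k}$. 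Among the four components of $\mathbf{k}^4\to\mathbf{k}$, those for $\epsilon_i\epsilon_i$ vanish and those for $\epsilon_1\epsilon_2,\epsilon_2\epsilon_1$ are the identity. I compute this pushout in stable presentable categories via Verdier quotients: it is $\mathbf{T}$ glued to $\mathbf{T}$ along $\mathbf{k}$, modulo the images of $\delta^0_{[2]}$ and $\delta^0_{[1]}$ respectively. I then match the result with $\mathbb{T}^2=\mathbf{k}$ via $\delta^1_{[1]}$ and $\delta^1_{[2]}$. The input is that $\delta^1_{[1]}$ exhibits $\mathbf{k}$ as the quotient of $\mathbf{T}$ by the essential image of $\delta^0_{[1]}$ (triangles $[V\to V\to 0]$), and similarly with the indices cyclically permuted. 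This follows from the twisted adjunctions \eqref{werd} and the semiorthogonal decomposition of $\mathbf{T}$.
\item In degree $3$ the source is a pushout of $\mathbf{T}\otimes A^2$ against $\mathbf{k}^2\otimes A^2\to A^3=0$, which is $\mathbf{T}/\langle \mathrm{im}\,\delta^0_{[2]},\mathrm{im}\,\delta^0_{[1]}\rangle$. This must vanish, since the two images generate $\mathbf{T}$.
\item Degrees $\ge 4$ vanish trivially.
\end{itemize}

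The main obstacle is the degree-$2$ (and degree-$3$) identification. Pushouts in $\mathrm{St}_k$ are colimits of presentable categories, so they must be computed via right adjoints as limits, not naively. I must also make sure that the quotient maps really agree with the $\delta^1$'s, including the correct pairing of legs ($[1]$ versus $[2]$) and shifts. I would handle this by passing to compact objects, where the pushout is the Verdier quotient of a gluing, and by using the known identification $\mathbb{T}^2\simeq\mathbf{k}$ with the structure maps determined in the ``three easy pieces'' calculation together with $\delta^2=0$.
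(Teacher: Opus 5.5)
Your proposal is correct, and it takes a genuinely different (though closely related) route from the paper.

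\textbf{The paper's route.} The paper works directly with the bar construction. It writes $(\mathbb{T}\otimes_A M)^n$ as the colimit of a diagram on the barycentric subdivision of an $n$-simplex, whose vertices are labelled $(n_X|m_1|\cdots|m_k|n_Y)$. It then removes the corner vertices $(k|n-k)$ one at a time, for $k\ge 5$, then $k=4$, $k=3$, $k=2$. At each step it shows that the corner is the colimit of its link, or that the link has zero colimit. What survives is the edge $(1|n-1)\leftarrow(0|1|n-1)\to(0|n)$.

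\textbf{Your route.} You put all the work into a statement about $\mathbb{T}$ alone. You give a two-cell presentation $\Phi$ of $\mathbb{T}$ by free graded $A$-modules and check degreewise that it is an equivalence. You then apply two formal facts: $-\otimes_A M$ preserves pushouts, and it sends $C\otimes A(j)$ to $C\otimes M^{\bullet-j}$.

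\textbf{Same local computations.} The substantive checks coincide with the paper's.
\begin{itemize}
\item Your degree-$2$ check is exactly the paper's computation of the link of $(2|n-2)$, with $M^{n-2}$ suppressed. You quotient $\mathbf{T}\otimes\mathbf{k}_i$ by the image of the $\epsilon_i\epsilon_i$ component, then glue along the two isomorphisms $\epsilon_1\epsilon_2$ and $\epsilon_2\epsilon_1$.
\item What this really uses is that $\delta^1_{[2]}$ and $\delta^1_{[1]}$ kill exactly the images of $\delta^0_{[2]}$ and $\delta^0_{[1]}$, and that $\delta^1_{[2]}\delta^0_{[1]}$ and $\delta^1_{[1]}\delta^0_{[2]}$ are the identity. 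Both are read off directly from the explicit formulas; \eqref{werd} is not needed.
\item Your degree-$3$ vanishing, namely that the images of $\delta^0_{[1]}$ and $\delta^0_{[2]}$ generate $\mathbf{T}$, plays the role of the paper's $(3|n-3)$ step.
\end{itemize}

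\textbf{What your route buys.}
\begin{itemize}
\item The high-degree steps become trivial. Since $A^{\ge 3}=0$, your pushout vanishes in degrees $\ge 4$ on the nose. The paper needs a separate argument at $(4|n-4)$, because its link contains the nonzero term $\mathbb{T}^2\otimes A^2\otimes M^{n-4}$.
\item You never need the combinatorial facts about removing a corner of the subdivided simplex and projecting to the opposite facet.
\item The comparison map is a single map of $A$-modules. So its canonicity and naturality in $M$ are automatic, and it is evident that no grading on $M$ is needed; the paper only notes this in a footnote.
\end{itemize}

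\textbf{What the paper's route buys.} It stays with the defining colimit. It never has to set up free graded modules, or supply the homotopy making the square defining $\Phi$ commute. You correctly reduce that homotopy to the free-module adjunction plus the identification of the degree $0\to1$ actions with $\delta^0_{[2]}$ and $\delta^0_{[1]}$.
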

\begin{remark}
    The relation to Theorem \ref{thm:cdw} as stated is the following.  Given two $\Z$-graded $\mathbf{k}[\epsilon]$-modules, $V, W$, we have (tautologically from the definition plus the formulas in the previous subsection) $$\mathbb{T} \otimes_{\mathbf{k}[\epsilon] \otimes \mathbf{k}[\epsilon]} (V \otimes W) = V \star_T W.$$ Meanwhile the expression on the right hand side of Theorem \ref{thm:tensor to pushout} tautologically specializes to one of the formulas given in \cite[p. 22, p. 26]{Christ-Dyckerhoff-Walde}  for the `coproduct totalization' of the bicomplex $V \otimes W$. (The authors of loc. cit. prefer to describe $\mathbf{T}$ as the representations of the $A_2$ quiver $\bullet \to \bullet$.) 
    The added generality of Theorem \ref{thm:tensor to pushout} over Theorem \ref{thm:cdw} is just that the above formulation does not require $M$ to be of the form $V \otimes W$, or for that matter, to have a bigrading.\footnote{In fact, the argument goes through without $M$ having any grading at all: note in the proof, we use the grading on the $X$ and $A$ factors, but not on the $Y=M$ factor.} This is necessary in order to glue the $T$ to not-disconnected surfaces with two chosen boundaries.
\end{remark}
\begin{remark}
    Coherent associativity of the totalization was not previously established 
    \cite[Remark 4.4.4]{Christ-Dyckerhoff-Walde}.  After Theorem \ref{thm:tensor to pushout}, we can  import it from  the coherent associativity of $\star_T$, which, as noted above, follows immediately from Theorem \ref{thm: gluing}.    
\end{remark}
\begin{remark} \label{rem: RM versus CDW} 
    When the relevant maps admit adjoints, we may  trade pushouts for pullbacks, which are typically easier to compute.  (This is a standard trick; see e.g. \cite{nadler2016wrapped, gammage-shende} for some uses in computing Fukaya categories, and a discussion at length in the context of totalization in \cite{Christ-Dyckerhoff-Walde}.)  For $\mathbf{k}[\epsilon]$-modules where the maps  $\epsilon$ action ($M^{n-1} \to M^n$) admits a right adjoint -- as is always the case, for formal reasons,\footnote{Our Fukaya categories are by definition ind completions of the small categories which symplectic geometers typically call Fukaya categories.  The covariant functoriality  of partially wrapped Fukaya categories under sector inclusion is also originally defined on the small categories, so after ind completion it preserves compact objects, i.e. is a  left adjoint of a left adjoint.  So in fact we may pass to right adjoints while still remaining in $Pr^L$.  A not-formal question is whether or not these adjoints themselves admit right adjoints in $Pr^L$, or in other words, whether the adjoints would have existed on the small categories.} for modules  arising from Fukaya categories of surfaces -- the right hand side of Theorem \ref{thm:tensor to pushout} can be computed as:
    $$(\mathbf{T} \otimes M^{n-1}) \!\!\!\!\! \!\!\!  \bigtimes_{M^{n-1} \oplus M^{n-1}}  
    \!\!\!\!\!\!\!\!  M^n.$$ 
    The maps $M^n \to M^{n-1}$ are the right adjoints of the $\epsilon$ actions, and the maps $\mathbf{T} \otimes M^{n-1} \to M^{n-1} \oplus M^{n-1}$ are the right adjoints of $\delta^0_{[2]}$ and $\delta^0_{[1]}$.  Thus an object of this pushout is an object $m \in M^n$, plus a morphism $(\epsilon_2)^R(m) \to (\epsilon_1)^R(m)$.  
    In case $M = V \otimes W$ and $m = v \otimes w$, the $\epsilon$ each only act on one factor, so we are asking for a map $v \otimes \epsilon_2^R(w) \to \epsilon_1^R(v) \otimes w$.  Up to  conventions, this is   the  description of objects of $V \ootimes W$ given by Rouquier and Manion \cite[p.  6]{rouquier-manion}.  They give an action their categorified $\mathfrak{gl}(1|1)^+$ on the result, which one can see from the following proof agrees with the action induced from the third (unused in gluing) $k[\epsilon]$ action on $\mathbb{T}$.  Note that this passage to adjoints accounts for the fact that our $\epsilon$ is a raising operator (increases $n$ in $\Sym^n$), while theirs is a lowering operator.  
\end{remark}

\begin{proof}[Proof of Theorem \ref{thm:tensor to pushout}]
    Let us first consider a general expression of the form $X \otimes_A Y$, where 
    $A$ is a monoidal category and $X$ and $Y$ are, respectively, left and right $A$-modules.  Then 
    we have in general 
    $$  \colim \left( \cdots X \otimes A \otimes A \otimes Y   \,\, \substack{\rightarrow\\[-.6ex] \rightarrow \\[-.6ex] \rightarrow}   \,\, X \otimes A \otimes Y  \,\, \substack{\rightarrow\\[-.6ex] \rightarrow } \,\, X \otimes Y \right) = X \otimes_A Y.$$
    Now assume further that $A$ is $\N$-graded with $A^0 = \mathbf{k}$, $X$ is $\N$-graded, and $Y$ is $\Z$-graded.    Then, after removing redundant terms, 
    the above expression allows the degree $n$ part, $(X \otimes_A Y)^n$, to be computed as the colimit of a diagram of the following shape.   The vertices are the vertices of a barycentric subdivision of a $n$-simplex. To avoid confusion, we will refer to the vertices of the original simplex as {\em corner vertices}.  The arrows are oriented as follows: on each face, the barycenter is a source, and the corner vertices are sinks.  The barycenter of a $k$-face is labelled by a category of the form $X^{n_X} \otimes A^{m_1} \otimes \cdots \otimes A^{m_k} \otimes Y^{n_Y}$, where the superscripts are integers which sum to $n$, $n_X \ge 0$, and the $m_k > 0$;  we will indicate this category by the tuple $(n_X, m_1, m_2, \cdots, m_k, n_Y)$.  The arrows in the diagram are the monoidal structure maps, i.e. correspond to merging adjacent entries in the tuple.  The colimit is understood homotopically, i.e. over the category whose nerve is the solid simplex on which we described the diagram.  We will denote this diagram $\Delta^n(X|A|Y)$.

    We will use two basic facts about simplifying such diagrams.  If $v$ is a corner vertex, note that the sub-diagram on all vertices mapping to $v$ is precisely the star of $v$; we write $\mathrm{link}(v) := \mathrm{star}(v) \setminus v$.  There's a natural map from the colimit over $\mathrm{link}(v)$ to the value at $v$; if this map is an isomorphism, then $\Delta^n(X|A|Y)$ and $\Delta^n(X|A|Y) \setminus v$ have the same colimit of the diagram.    Second,  the colimit of $\Delta^n(X|A|Y) \setminus v$ is equivalent to the colimit on the facet of $\Delta^n(X|A|Y)$ opposing $v$. 

    Now we turn to the diagram of interest: $X = \mathbb{T}$, $A= k[\epsilon] \otimes k[\epsilon]$, and $Y = M$ arbitrary.  Observe that then $X^{\ge 3} = 0$ and $A^{\ge 3} = 0$.  Consider the corner vertex $(5,n-5)$.  The corresponding category is $\mathbb{T}^5 \otimes M^{n-5} = 0$.  The nearest neighbors are the points $(a, 5-a, n-5)$.  But every such category is zero, so indeed $(5,n-5)=0$ is the pushout of its link, and may be removed.  The same works a fortiori for $(k, n-k)$ for any $k\ge 5$, so we may iteratively collapse to the diagram on the 4-face of $\Delta^n(\mathbb{T}|\mathbf{k}[\epsilon] \otimes \mathbf{k}[\epsilon]|M)$ spanned by corner vertices $(4|n-4), \ldots, (0|n)$.  
    
    Consider the corner $(4|n-4)$.  The category there is $\mathbb{T}^4 \otimes M^{n-4}=0$.  The only non-zero immediately adjacent vertex is $(2|2|n-4)$ with value $\mathbb{T}^2  \otimes (\mathbf{k}[\epsilon] \otimes \mathbf{k}[\epsilon])^2 \otimes M^{n-2} = M^{n-2}$.  In the link of $(4|n-4)$, we have a subdiagram 
    $$0 = (3|1|n-2) \leftarrow  (2|1|1|n-2) \rightarrow  (2|2|n-2) = M^{n-2}.$$  The map 
    $$(2|1|1|n-2) = \mathbf{k} \otimes (\mathbf{k} \oplus \mathbf{k}) \otimes (\mathbf{k} \oplus \mathbf{k})  \otimes M^{n-2} \to M^{n-2}$$ is easily seen to be surjective (the $M^{n-2}$ just comes along for the ride, and the map $(2|1|1|0) \to (2|2|0)$ is obviously surjective).  It follows that the pushout of the link of $(4|n-4)$ is zero, and so we can again cut the corner and project onto the 3-simplex with corners $(3, n-3),\ldots, (0,n)$.

    Again the corner vertex vanishes: $(3|n-3) = 0$. The only potentially nonzero adjacent terms are $(2|1|n-3) = \mathbf{k} \otimes \mathbf{k}^{\oplus 2} \otimes M^{n-3}$ and $(1|2|n-3) = \mathbb{T} \otimes \mathbf{k} \otimes M^{n-3}$.  Again the $Y$ factors will come for the ride, so we suppress them (equivalently set $n=3$).   Now, 
    we have subdiagrams of the link 
    $$0 = (0|3| 0) \leftarrow (0|1|2|0) = \mathbf{k} \oplus \mathbf{k}  \twoheadrightarrow (1|2|0) = \mathbf{T} \qquad \qquad 0 = (0|3|0) \leftarrow (0|2|1|0) = \mathbf{k} \oplus \mathbf{k} \xrightarrow{\sim} (2|1|0) = \mathbf{k} \oplus \mathbf{k}. $$
    From these we see that the pushout of the link is again zero, and so we may cut the corner, and project to the 2-simplex on $(2| n-2), \ldots, (0|n)$. 

    Consider the corner $(2|n-2) = M^{n-2}$.  Its link is the diagram $(1|1|n-2) \leftarrow (0|1|1|n-2) \rightarrow (0|2|n-2)$ which we expand as
    $$\mathbf{T} \otimes (\mathbf{k} \oplus \mathbf{k}) \otimes M^{n-2} \leftarrow (\mathbf{k} \oplus \mathbf{k}) \otimes (\mathbf{k} \oplus \mathbf{k}) \otimes M^{n-2} \rightarrow \mathbf{k} \otimes  M^{n-2}$$
    We claim the pushout is in fact $M^{n-2}$.  Let's drop the $M^{n-2}$ to avoid cluttering notation, and decorate the $\mathbf{k}$ factors by $1$ and $2$ according as they come from the first or second copy of $\mathbf{k}[\epsilon]$:
    $$\mathbf{T} \otimes (\mathbf{k}_1 \oplus \mathbf{k}_2)  \leftarrow (\mathbf{k}_1 \oplus \mathbf{k}_2) \otimes (\mathbf{k}_1 \oplus \mathbf{k}_2) \rightarrow \mathbf{k} $$
    Now, the $\mathbf{k}_1 \otimes \mathbf{k}_1$ factor in the middle maps to $\mathbf{T} \otimes \mathbf{k}_1$ on the left, and dies on the right; similarly with $\mathbf{k}_2 \otimes \mathbf{k}_2$.  
    Thus the pushout of this diagram agrees with the pushout by the quotient of these pieces and their images; i.e. of 
    $$((\mathbf{T}/\mathbf{k}_1) \otimes \mathbf{k}_1) \oplus ((\mathbf{T} /\mathbf{k}_2)  \leftarrow 
    (\mathbf{k}_2 \otimes \mathbf{k}_1) \oplus (\mathbf{k}_1 \otimes \mathbf{k}_2)\rightarrow \mathbf{k}$$
    The left arrow is an isomorphism, so the colimit is $\mathbf{k}$, as desired.  We may cut the corner. 

    What remains is the diagram $(1|n-1) \leftarrow (0|1|n-1) \rightarrow (0|n)$, or, in other words
    $$\mathbf{T} \otimes M^{n-1} \leftarrow (\mathbf{k} \oplus \mathbf{k}) \otimes M^{n-1} \rightarrow M^n.$$
    This completes the proof.
\end{proof}
\newpage

\bibliographystyle{plain}
\bibliography{ref}

@article{perutz2008hamiltonian,
  title={Hamiltonian handleslides for Heegaard Floer homology},
  author={Perutz, Timothy},
  journal={arXiv preprint arXiv:0801.0564},
  year={2008}
}

@book{petersen2006riemannian,
  title={Riemannian geometry},
  author={Petersen, Peter},
  year={2006},
  publisher={Springer}
}

@article{richberg1967stetige,
  title={Stetige streng pseudokonvexe Funktionen},
  author={Richberg, Rolf},
  journal={Mathematische Annalen},
  volume={175},
  number={4},
  pages={257--286},
  year={1967},
  publisher={Springer}
}

@article{Ozsvath-Szabo-3manifolds,
  title={Holomorphic disks and topological invariants for closed three-manifolds},
  author={Ozsv{\'a}th, Peter and Szab{\'o}, Zolt{\'a}n},
  journal={Annals of Mathematics},
  pages={1027--1158},
  year={2004},
  publisher={JSTOR}
}

@article{Ozsvath-Szabo-knots,
  title={Holomorphic disks and knot invariants},
  author={Ozsv{\'a}th, Peter and Szab{\'o}, Zolt{\'a}n},
  journal={Advances in Mathematics},
  volume={186},
  number={1},
  pages={58--116},
  year={2004},
  publisher={Elsevier}
}

@book{Lipschitz-Ozsvath-Thurston,
  title={Bordered {H}eegaard {F}loer homology},
  author={Lipshitz, Robert and Ozsv{\'a}th, Peter and Thurston, Dylan},
  year={2018},
  publisher={American Mathematical Society}
}

@article{Ozsvath-Szabo-four-manifolds,
  title={Holomorphic triangles and invariants for smooth four-manifolds},
  author={Ozsv{\'a}th, Peter and Szab{\'o}, Zolt{\'a}n},
  journal={Advances in Mathematics},
  volume={202},
  number={2},
  pages={326--400},
  year={2006},
  publisher={Elsevier}
}

@article{Douglas-Manolescu,
  title={On the algebra of cornered {F}loer homology},
  author={Douglas, Christopher and Manolescu, Ciprian},
  journal={Journal of Topology},
  volume={7},
  number={1},
  pages={1--68},
  year={2014},
  publisher={Wiley Online Library}
}

@book{Douglas-Lipschitz-Manolescu,
  title={Cornered {H}eegaard {F}loer homology},
  author={Douglas, Christopher and Lipshitz, Robert and Manolescu, Ciprian},
  year={2019},
  publisher={American Mathematical Society}
}

@book{demailly1997complex,
  title={Complex analytic and differential geometry},
  author={Demailly, Jean-Pierre},
  year={1997},
  publisher={Universit{\'e} de Grenoble}
}

@article{rouquier-manion,
  title={Higher representations and cornered {H}eegaard {F}loer homology},
  author={Manion, Andrew and Rouquier, Raphael},
  journal={arXiv:2009.09627}
}

@article{varouchas1984stabilite,
  title={Stabilit{\'e} de la classe des vari{\'e}t{\'e}s k{\"a}hl{\'e}riennes par certains morphismes propres},
  author={Varouchas, Jean},
  journal={Inventiones mathematicae},
  volume={77},
  number={1},
  pages={117--127},
  year={1984},
  publisher={Springer}
}

@article{manolescu1,
  title={Nilpotent slices, {H}ilbert schemes, and the {J}ones polynomial},
  author={Manolescu, Ciprian},
  journal={Duke Math. J.},
  volume={131},
  number={1},
  pages={311--369},
  year={2006}
}

@article{manolescu2,
  title={Link homology theories from symplectic geometry},
  author={Manolescu, Ciprian},
  journal={Advances in Mathematics},
  volume={211},
  number={1},
  pages={363--416},
  year={2007},
  publisher={Elsevier}
}

@article{ALR,
    author = "Aganagic, Mina and LePage, Elise and Rapcak, Miroslav",
    title = "{Homological Link Invariants from Floer Theory}",
    journal = "arXiv:2305.13480"
}

@article{abouzaid-smith:khovanov,
  title={Khovanov homology from {F}loer cohomology},
  author={Abouzaid, Mohammed and Smith, Ivan},
  journal={Journal of the American Mathematical Society},
  volume={32},
  number={1},
  pages={1--79},
  year={2019}
}

@article{abouzaid-smith-arc,
  title={The symplectic arc algebra is formal},
  author={Abouzaid, Mohammed and Smith, Ivan},
  journal={Duke Mathematical Journal},
  volume={165},
  number={6},
  pages={985--1060},
  year={2016},
  publisher={Duke University Press}
}

@article{GPS1,
  title={Covariantly functorial wrapped {F}loer theory on {L}iouville sectors},
  author={Ganatra, Sheel and Pardon, John and Shende, Vivek},
  journal={Publications math{\'e}matiques de l'IH{\'E}S},
  pages={1--128},
  year={2019},
  publisher={Springer}
}

@article{GPS2,
  title={Sectorial descent for wrapped {F}ukaya categories},
  author={Ganatra, Sheel and Pardon, John and Shende, Vivek},
  journal={Journal of the American Mathematical Society},
  volume={37},
  number={2},
  pages={499--635},
  year={2024}
}

@article{GPS3,
  title={Microlocal {M}orse theory of wrapped {F}ukaya categories},
  author={Ganatra, Sheel and Pardon, John and Shende, Vivek},
  journal={Annals of Mathematics},
  volume={199},
  number={3},
  pages={943--1042},
  year={2024},
  publisher={Department of Mathematics, Princeton University Princeton, New Jersey, USA}
}

@book{cieliebak-eliashberg,
  title={From Stein to Weinstein and back: symplectic geometry of affine complex manifolds},
  author={Cieliebak, Kai and Eliashberg, Yakov},
  year={2012},
  publisher={American Mathematical Soc.}
}

@article{nadler2016wrapped,
  title={Wrapped microlocal sheaves on pairs of pants},
  author={Nadler, David},
  journal={arXiv:1604.00114}
}

@article{shende-microlocal,
  title={Microlocal category for {W}einstein manifolds via the h-principle},
  author={Shende, Vivek},
  journal={Publications of the Research Institute for Mathematical Sciences},
  volume={57},
  number={3},
  pages={1041--1048},
  year={2021}
}

@article{aganagic-knot-2,
  title={Knot categorification from mirror symmetry, part {II}: Lagrangians},
  author={Aganagic, Mina},
  journal={arXiv:2105.06039}
}

@article{aganagic-icm,
  title={Homological Knot Invariants from Mirror Symmetry},
  author={Aganagic, Mina},
  journal={arXiv:2207.14104}
}

@article{Honda-Tian-Yuan,
  title={Higher-dimensional {H}eegaard {F}loer homology and {H}ecke algebras},
  author={Honda, Ko and Tian, Yin and Yuan, Tianyu},
  journal={arXiv:2202.05593}
}

@article{Colin-Honda-Tian,
  title={Applications of higher-dimensional {H}eegaard {F}loer homology to contact topology},
  author={Colin, Vincent and Honda, Ko and Tian, Yin},
  journal={arXiv:2006.05701}
}

@article{Yuan-link,
  title={A link invariant from higher-dimensional {H}eegaard {F}loer homology},
  author={Yuan, Tianyu},
  journal={arXiv:2309.13241}
}

@article{Seidel-Smith,
  title={A link invariant from the symplectic geometry of nilpotent slices},
  author={Seidel, Paul and Smith, Ivan},
  journal={Duke Mathematical Journal},
  volume={134},
  number={3},
  pages={453--514},
  year={2006},
  publisher={Duke University Press}
}

@incollection{Johnson-Freyd,
  title={Heisenberg-picture quantum field theory},
  author={Johnson-Freyd, Theo},
  booktitle={Representation Theory, Mathematical Physics, and Integrable Systems: In Honor of Nicolai Reshetikhin},
  pages={371--409},
  year={2021},
  publisher={Springer}
}

@article{ADLSZ,
  title={Quiver {H}ecke algebras from {F}loer homology in {C}oulomb branches},
  author={Aganagic, Mina and Danilenko, Ivan and Li, Yixuan and Shende, Vivek and Zhou, Peng},
  journal={arXiv:2406.04258}
}

@article{Dyckerhoff-Jasso-Lekili,
  title={The symplectic geometry of higher {A}uslander algebras: symmetric products of disks},
  author={Dyckerhoff, Tobias and Jasso, Gustavo and Lekili, Yank$\iota$},
  journal={Forum of Mathematics, Sigma},
  volume={9},
  pages={e10},
  year={2021},
  organization={Cambridge University Press}
}

@article{Lazarev-injective,
  title={Simplifying {W}einstein {M}orse functions},
  author={Lazarev, Oleg},
  journal={Geometry \& Topology},
  volume={24},
  number={5},
  pages={2603--2646},
  year={2020},
  publisher={Mathematical Sciences Publishers}
}

@article{Lazarev-Sylvan-Tanaka,
  title={The infinity-category of stabilized {L}iouville sectors},
  author={Lazarev, Oleg and Sylvan, Zachary and Tanaka, Hiro},
  journal={arXiv:2110.11754}
}

@article{Auroux-bordered,
  title={Fukaya categories of symmetric products and bordered {H}eegaard-{F}loer homology},
  author={Auroux, Denis},
  journal={Journal of G{\"o}kova Geometry Topology},
  volume={4},
  pages={1--54},
  year={2010}
}

@article{Christ-Dyckerhoff-Walde,
  title={Complexes of stable $\infty$-categories},
  author={Christ, Merlin and Dyckerhoff, Tobias and Walde, Tashi},
  journal={arXiv:2301.02606}
}

@article{Rouquier-lecture,
    title={Higher tensor products},
    author={Rouquier, Rapha\"el},
    journal={\url{https://www.youtube.com/watch?v=FPJH5vvstPk}}
}

@book{lurie-topos,
  title={Higher topos theory},
  author={Lurie, Jacob},
  year={2009},
  publisher={Princeton University Press}
}

@book{lurie-algebra,
  title={Higher Algebra},
  author={Lurie, Jacob}, 
  publisher={\url{https://www.math.ias.edu/~lurie/papers/HA.pdf}}
}

@article{Dai-sectorial,
  title={Sectorial Decompositions of Symmetric Products of Surfaces},
  author={Dai, Xinle},
  journal={arXiv:2511.16584}
}

@article{Kapranov-Schectman-shuffle,
  title={Shuffle algebras and perverse sheaves},
  author={Kapranov, Mikhail and Schechtman, Vadim},
  journal={Pure and Applied Mathematics Quarterly},
  volume={16},
  number={3},
  pages={573--657},
  year={2020},
  publisher={International Press of Boston, Inc. Somerville, MA 02143, USA}
}

@article{lepage-shende,
  title={Aganagic's invariant is {K}hovanov homology},
  author={LePage, Elise and Shende, Vivek},
  journal={arXiv:2505.00327}
}

@article{nadler-shende,
  title={Sheaf quantization in {W}einstein symplectic manifolds},
  author={Nadler, David and Shende, Vivek},
  journal={arXiv:2007.10154}
}

@article{didedda,
  title={Symplectic higher {A}uslander correspondence for type {A}},
  author={Di Dedda, Ilaria},
  journal={arXiv:2311.16859}
}

@article{sarkar-wang,
  title={An algorithm for computing some {H}eegaard {F}loer homologies},
  author={Sarkar, Sucharit and Wang, Jiajun},
  journal={Annals of mathematics},
  pages={1213--1236},
  volume={171},
  number={2},
  year={2010},
  publisher={JSTOR}
}

@article{manolescu-ozsvath-sarkar,
  title={A combinatorial description of knot {F}loer homology},
  author={Manolescu, Ciprian and Ozsv{\'a}th, Peter and Sarkar, Sucharit},
  journal={Annals of Mathematics},
  pages={633--660},
  year={2009},
  publisher={JSTOR}
}

@article{khovanov-gl12,
  title={How to categorify one-half of quantum $\mathfrak{gl}(1| 2)$},
  author={Khovanov, Mikhail},
  journal={Banach Center Publications},
  volume={103},
  number={1},
  pages={211--232},
  year={2014}
}

@article{gammage-shende,
  title={Mirror symmetry for very affine hypersurfaces},
  author={Gammage, Benjamin and Shende, Vivek},
  journal={Acta Mathematica},
  volume={229},
  number={2},
  pages={287--346},
  year={2022},
  publisher={Lehigh University Bethlehem, Penn., USA}
}

@article{lurie-rotation,
  title={Rotation invariance in algebraic K-theory},
  journal={\url{https://www.math.ias.edu/~lurie/papers/Waldhaus.pdf}},
  author={Lurie, Jacob}
}

@article{CKNS,
  title={Perverse microsheaves},
  author={C{\^o}t{\'e}, Laurent and Kuo, Christopher and Nadler, David and Shende, Vivek},
  journal={arXiv:2209.12998}
}

@article{Gavela-Large-Ward,
  title={On arborealization, {M}aslov data, and lack thereof},
  author={Alvarez-Gavela, Daniel and Large, Tim and Ward, Abigail},
  journal={arXiv:2503.09783}
}

\end{document}